\newtheorem{theorem}{Theorem}[section]
\newtheorem{proposition}[theorem]{Proposition}
\newtheorem{lemma}[theorem]{Lemma}
\newtheorem{corollary}[theorem]{Corollary}
\theoremstyle{definition}
\newtheorem{assumption}[theorem]{Assumption}
\theoremstyle{remark}
\newtheorem{remark}[theorem]{Remark}
\numberwithin{equation}{section}
\newcommand{\1}{\mathbbm{1}}
\renewcommand{\epsilon}{\varepsilon}
\newcommand{\N}{\mathbb{N}}
\renewcommand{\phi}{\varphi}
\newcommand{\R}{\mathbb{R}}
\newcommand{\Sph}{\mathbb{S}}
\newcommand{\Xb}{{\bf X}}
\DeclareMathOperator{\per}{Per}
\DeclareMathOperator{\supp}{supp}
\DeclareMathOperator{\Tr}{Tr}
\begin{document}

\title[Pleijel Theorem for  sub-Laplacians]{On Courant and Pleijel theorems\\ for sub-Riemannian Laplacians}

\author{Rupert L. Frank}
\address[Rupert L. Frank]{Mathe\-matisches Institut, Ludwig-Maximilians Universit\"at M\"unchen, The\-resienstr.~39, 80333 M\"unchen, Germany, and Munich Center for Quantum Science and Technology, Schel\-ling\-str.~4, 80799 M\"unchen, Germany, and Mathematics 253-37, Caltech, Pasa\-de\-na, CA 91125, USA}
\email{r.frank@lmu.de}

\author{Bernard Helffer}
\address[Bernard Helffer]{Laboratoire de Math\'ematiques Jean Leray, CNRS, Nantes Universit\'e, F44000. Nantes, France}
\email{Bernard.Helffer@univ-nantes.fr}

\thanks{\copyright\, 2024 by the authors. This paper may be reproduced, in its entirety, for non-commercial purposes.\\
Partial support through US National Science Foundation grant DMS-1954995 (R.L.F.), as well as through the German Research Foundation through EXC-2111-390814868 and TRR 352-Project-ID 470903074 (R.L.F.) is acknowledged.}

\begin{abstract}
	We are interested in the number of nodal domains of eigenfunctions of sub-Laplacians on sub-Riemannian manifolds. Specifically, we investigate the validity of Pleijel's theorem, which states that, as soon as the dimension is strictly larger  than $1$, the number of nodal domains of an eigenfunction corresponding to the $k$-th eigenvalue is strictly (and uniformly, in a certain sense) smaller than $k$ for large~$k$.
	
	In the first part of this paper we reduce this question from the case of general sub-Riemannian manifolds to that of nilpotent groups.
	
	  In the second part, we analyze in detail the case where the nilpotent group is a Heisenberg group times a Euclidean space. Along the way we improve known bounds on the optimal constants in the Faber--Krahn and isoperimetric inequalities on these groups.
\end{abstract}

\maketitle

\tableofcontents

\section{Introduction}\label{s0}

\subsection{Short summary}~\\
The study of nodal domains is a classical topic in spectral geometry. The founding work of R.~Courant \cite{CH} from 1923 showed that an eigenfunction of the Laplacian corresponding to the $k$-eigenvalue has at most $k$ nodal domains. In the past century many contributions have led to a better understanding of nodal domains. An important step for this was a theorem of A.~Pleijel \cite{Pl} from 1956, which shows that asymptotically  the number of nodal sets of an eigenfunction becomes significatively smaller than the bound given by Courant when the dimension is strictly larger than~$1$. This initially involved the Dirichlet condition but was more recently extended to other boundary conditions by I.~Polterovich \cite{Po} and  C.~L\'ena \cite{Le}; see also \cite{DFV}.

On the other hand, it is natural to consider the same question  for other operators and this leads naturally to the consideration of Dirichlet sub-Laplacians, initially called H\"ormander operators \cite{Ho}, which share with the Laplacian the property of hypoellipticity.  An important step in 
their analysis was done by L.~P.~Rothschild and E.~M.~Stein \cite{RS}, who proved their maximal hypoellipticity. They proceeded by comparison with operators on nilpotent Lie groups, a technique that is also known as the nilpotentization procedure. Using this approximation, G.~M\'etivier \cite{Me} proved a beautiful Weyl formula for the asymptotic behavior of the counting function of eigenvalues, provided this approximation can be done ``uniformly".  Motivated by a recent paper \cite{EL} by S.~Eswarathasan and C.~Letrouit on Courant's nodal theorem and many discussions on related problems with C. Letrouit, our aim in this paper is to try to extend Pleijel's theorem to the sub-Riemannian context. 

Our paper is divided into two, rather independent, parts. In the first part, we show how the validity of Pleijel's theorem in the sub-Riemannian case can be reduced to the specific analysis of sub-Laplacian on nilpotent groups. In the second part, we analyze the validity of Pleijel's theorem in open sets of specific groups related to the Heisenberg group. This leads us to the question of finding new explicit and close to optimal bounds on the constants for Sobolev inequalities, Faber--Krahn inequalities and isoperimetric inequalities.


\medskip

\subsection{The H\"ormander operator}~\\
We consider in an open set $\Omega \subset \mathbb R^n$ the Dirichlet realization of a sub-Laplacian (also called H\"ormander's operator)
$$
-\Delta_{\bf  X}^\Omega :=\sum_{j=1}^p X_j^\star X_j \,,
$$
where $X_1,\ldots,X_p$ are real $C^\infty$ vector fields on $\overline\Omega$ satisfying the so-called H\"ormander condition \cite{Ho}, which reads:

\begin{assumption}\label{CHOr} 
For some $ r\geq 1$ the $ X_j$, $j=1,\ldots,p$, and their brackets up to order $ r$ generate at each point $ x\in \overline{\Omega}$ the tangent space $T_x \Omega$. 
\end{assumption}

Note that the terminology ``sub-Riemannian Laplacian" or in short ``sub-Laplacian" is posterior to the work of L. H\"ormander and corresponds to the development of sub-Riemannian geometry at the beginning of the nineties; see for example \cite{BeRi,Je}.

More generally, we are given a connected $C^\infty$ Riemannian manifold  $M$ of dimension $n$  with a given measure $\mu$ (with a $C^\infty$-density with respect to the Lebesgue measure in a local system of coordinates) and a system ${\bf X}= (X_1,\cdots,X_p)$ of real $C^\infty$ vector fields on $M$ satisfying Assumption~\ref{CHOr}, where $p\leq n$. We consider the operator 
$$
-\Delta_{\bf X}^{M,\mu} :=\sum_{j=1}^p X_j^\star X_j \,,
$$
where $X_j^\star$ is the formal adjoint obtained by using the $L^2$ scalar product with respect to the given measure $\mu$. In local coordinates $X_j^\star =- X_j +c_j$ for a function $c_j$. In the case when $M$ has a boundary, we always impose a Dirichlet condition.

The sub-Laplacian $-\Delta_{\bf X}^{M,\mu}$ is known to be hypoelliptic \cite{Ho} (meaning that if $\omega\subset M$ is open and $u$ is a distribution in $M$ with $-\Delta_{\bf X}^{M,\mu} u$ smooth in $\omega$, then $u$ is smooth in $\omega$). By Rothschild--Stein \cite{RS} it is maximally hypoelliptic, that is, it satisfies
$$
\| X_k X_\ell \,  u \|_{L^2(M,\mu)} \leq C \left( \|\Delta_{\bf X}^{M,\mu}  u \|_{L^2(M,\mu)} + \|u\|_{L^2(M,\mu)} \right), \qquad \forall k,\ell\,, \forall u \in C_c^\infty(M)\,.
$$
The latter result is proved through a technique of nilpotentization, which will also be important for us. Moreover, if the boundary is $C^\infty$ and noncharacteristic for ${\bf X}$ (i.e.~at each point of the boundary there exists a vector field $ X_j$ that is transverse to the boundary at the given point), then we have $C^\infty$-regularity up to the boundary. We emphasize that we will not need this latter condition for our results; see Section~\ref{srest}.
  
The  operator $-\Delta_{\bf X}^{M,\mu}$ has compact resolvent for instance when $M$ is closed, and we can ask all the questions about its discrete spectrum that have been solved along the years for the Dirichlet realization of the Euclidean Laplacian on a bounded open set. These include: 
 \begin{itemize}
 \item Simplicity of the ground state or, more generally, its multiplicity.
 \item Local structure of the nodal sets, density of the nodal sets, ...
 \item Courant's theorem: comparison between the minimal labelling $ k$ of an eigenvalue $\lambda_k$  and the number $\nu_k$ of the nodal domains of the eigenfunction in the eigenspace corresponding to $\lambda_k$.
 \item Pleijel's theorem.
 \end{itemize}
We focus in this paper on the two last items. They will be described in more detail in the next two subsections.

To clarify our terminology: nodal domains of a real (eigen)function $u$ are the connected components of $\{ x\in M:\ u(x) \neq 0\}$; nodal sets are their boundaries.

\medskip

\subsection{Courant's Theorem}~\\
As is well known, Courant's theorem in the case of the Dirichlet Laplacian on a bounded open subset of $\mathbb R^n$ states that an eigenfunction associated with the $k$-th eigenvalue has at most $k$ nodal domains:
$$
\nu_k \leq k\,.
$$
If one looks at the standard proof of Courant's theorem, this inequality mainly appears as a consequence of a restriction statement (the restriction of an eigenfunction to its nodal domain is the ground state of the Dirichlet realization of the Laplacian in this domain), the minimax characterization of the eigenvalues,  and the Unique Continuation theorem (UCT). Hence the question is to determine under which conditions these three results extend to sub-Riemannian Laplacians. 

Concerning the restriction statement, having rather limited information about the  nodal sets, we successfully adapt to the sub-Riemannian case a proof proposed in \cite{MP}, which permits to avoid regularity assumptions on $\partial \Omega$. The variational characterization then holds. The UCT was proved by K.\ Watanabe \cite{Wa} in the $C^\infty$ category in dimension 2, but H.\ Bahouri \cite{Ba} gave a discouraging  counterexample to UCT with two vector fields in $ \mathbb R^3$. 
 Here
 we are fortunate to know that J.\ M.\ Bony  proved at the end of the sixties \cite{Bo} that UCT holds when the vector fields are analytic\footnote{Note nevertheless that the sub-Laplacians with analytic vector fields are not in general hypoelliptic analytic. While the sub-Laplacian on the Heisenberg group $\mathbb H_n$ is known to be hypoelliptic analytic (its fundamental solution is explicitly known and analytic outside the origin
  in the exponential coordinates; see \cite{Fo0}), the sub-Laplacian on $\mathbb H_n\times \mathbb R^k$ with $k\geq 1$ is known to be non-hypoelliptic analytic as a direct consequence of a result by  Baouendi--Goulaouic \cite{BaGo}. We refer to \cite{Me2} for a characterization of the nilpotent groups of rank $2$ whose associated sub-Laplacian is hypoelliptic analytic and to \cite{Hel} for other counterexamples  when the rank of nilpotency is strictly larger than $2$. Here we have limited our references to the case of sub-Laplacians on nilpotent groups. Starting in the seventies there have been a lot of contributions on the subject, in particular around an ``evoluting" Tr\`eves conjecture.}.
  Hence Courant's theorem holds in the analytic category \cite{EL}. At the end of the next section, we will extend statements given in \cite{EL} to the case when the boundary is not necessarily non-characteristic.

\medskip

\subsection{Pleijel's Theorem}~\\
In the same spirit, one can hope for an asymptotic control of $ \nu_k/k$ for large $k$ that improves over Courant's bound when $n\geq 2$. In the case of the Dirichlet Laplacian on a bounded open subset of $\mathbb R^n$ with $n\geq 2$, Pleijel's theorem \cite{Pl} says that there is an constant, independent of the open set and denoted by $\gamma(\mathbb R^n)$, such that
\begin{equation}
	\label{eq:pleijelintro}
	\limsup_{k\rightarrow +\infty} \frac{\nu_k}{k} \leq \gamma(\R^n)
\end{equation}
and, importantly,
\begin{equation}
	\label{eq:pleijelintro2}
	\gamma(\R^n)<1 
	\qquad\text{for}\ n\geq 2 \,.
\end{equation}
Because of this latter inequality, Pleijel's bound \eqref{eq:pleijelintro} provides an asymptotic improvement over Courant's theorem.

Later in this paper we  will use the expression ``Pleijel's theorem holds'' (for a given operator) to  mean the assertion that $\limsup_{k\to\infty} \nu_k/k<1$.

The proof of Pleijel's theorem is a nice combination of two ingredients. The first one is Weyl's formula, which describes the asymptotic behavior of the eigenvalue counting function $N(\lambda,-\Delta^\Omega)$ of the Dirichlet Laplacian $-\Delta^\Omega$:
$$
N(\lambda, -\Delta^\Omega) \sim \mathcal W(\mathbb R^n) \,  |\Omega|\,   \lambda^{\frac n2} 
\qquad\text{as}\ \lambda\to\infty \,.
$$
Here $\mathcal W(\R^n)$ is a certain explicit constant, depending only on $n$. The second ingredient is the Faber--Krahn inequality, which gives the following lower bound on the lowest eigenvalue $\lambda_1(-\Delta^\Omega)$ of the operator $-\Delta^\Omega$:
$$
\lambda_1(-\Delta^{\Omega}) \geq C^{\rm FK}(\R^n) \, |\Omega|^{-\frac 2n} \,.
$$
The constant $C^{\rm FK}(\R^n)$ is equal to the first Dirichlet eigenvalue on a ball of unit volume;  for an expression of this constant in terms of Bessel functions see \eqref{eq:isofkexplicit} below. Pleijel's proof combines these two ingredients and leads to the inequality \eqref{eq:pleijelintro} with
\begin{equation}\label{PlConst}
\gamma (\R^n)= (C^{\rm FK}(\R^n))^{-\frac n2} \, \mathcal W(\mathbb R^n)^{-1} \,.
\end{equation}
Using the explicit expressions for the Weyl constant $\mathcal W(\mathbb R^n)$ and the Faber--Krahn constant $C^{\rm FK}(\R^n)$ one can establish \eqref{eq:pleijelintro2}; in this regard we refer to \cite[Part II, Lemma~9]{BerMe}.

\medskip

\subsection{A Pleijel bound in the sub-Riemannian case}~\\
Our goal in the present paper is to generalize Pleijel's theorem to the case of sub-Laplacians, and so we are naturally led to the sub-Riemannian analogues of the two ingredients of its proof, namely Weyl's formula and the Faber--Krahn inequality.

\medskip

We can be optimistic on the side of Weyl's formula. Since the pioneering work of G. M\'etivier \cite{Me} we are rich in results on the asymptotic distribution of eigenvalues, at least if we add to Assumption \ref{CHOr} a certain equiregularity condition (or M\'etivier's condition),  which permits to approximate the vector fields $X_j$ at each point $x$  by the generators of a nilpotent Lie algebra $\mathcal G_x$. As we already mentioned, we work in the setting of a connected $C^\infty$ manifold  $M$ (with or without boundary) of dimension $n$  with a given measure $\mu$ (with a $C^\infty$-density with respect to the Lebesgue measure in a local system of coordinates) and a system of $p$ $C^\infty$ vector fields $X_1,\ldots,X_p$, where $p\leq n$. We assume the vector fields satisfy the following:
 
\begin{assumption}\label{equireg}
 For each $j\leq r$, the dimension of the space spanned by the commutators of length $\leq j$ at each point is constant.
\end{assumption}

In the language of modern sub-Riemannian  geometry,  this is called an \emph{equiregular distribution}.  The simplest example occurs with $r=2$, $p=2$ and $n=3$ with the Heisenberg group $\mathbb H$ and more generally  in contact geometry (see for example \cite{ArRi}).

We denote by $\mathcal{D}_j(x)$ the span at $x\in M$ of all vector fields obtained as brackets of length $\leq j$ of the $X_k$'s. We set $n_j :=\text{dim}(\mathcal{D}_j(x))$, which by Assumption \ref{equireg} above does not depend on the point $x\in M$. We can then introduce, setting $n_0:=0$,
\begin{equation}\label{eq:defQ}
	Q:= \sum_{j=1}^r j \, (n_j-n_{j-1})\,.
\end{equation}
This plays the role of an ``effective dimension''.

Under Assumptions  \ref{CHOr} and \ref{equireg}, G.~M\'etivier shows (using, in particular, the techniques of \cite{RS,Rot}) that there is a constant $c(M,{\bf X})$ such that
\begin{equation}\label{MetForm1}
N(\lambda,-\Delta_{\bf X}^{M,\mu}):= \#  \{j:\ \lambda_j (-\Delta_{\bf X}^{M,\mu}) \leq \lambda\} \sim c (M,{\bf X}) \, \lambda^{\frac Q2}
\qquad\text{as}\ \lambda\to\infty \,.
\end{equation}
We will come back later to the structure of $c (M,{\bf X})$ and to its computation in particular cases. Note that  in the case $r=2$ related results are obtained in \cite{MS1,MS2,Me,Mo}, and M\'etivier's theorem (together with many other results) has been recently revisited in the light of sub-Riemannian geometry  in \cite{CHT0,CHT,CHT1}.

This concludes our discussion of the first ingredient in Pleijel's proof, namely an analogue of Weyl's formula.

\medskip

Concerning the second ingredient, namely an analogue of the Faber--Krahn inequality, our knowledge is rather poor. This is the question about minimizing the first Dirichlet eigenvalue among open sets of given measure. In the case of the  Heisenberg group, one can think of a result by P.\ Pansu \cite{Pa} concerning the isoperimetric inequality. C.\ L\'ena's approach \cite{Le} for treating the Neumann problem for the Laplacian could be helpful (see \cite{HS}) if the set in $M$ where the system $\mathbf{X}$ is not elliptic is ``small" in some sense, but our equiregularity assumption excludes this case.

In the first part of this paper we will follow another way by revisiting the nilpotentization procedure. This permits us to deduce Faber--Krahn inequalities  for sub-Laplacians from Faber--Krahn  inequalities for sub-Laplacians on nilpotent groups.

More precisely, under the above two assumptions we will prove that the Faber--Krahn inequality holds on subsets of small measure with a constant that is arbitrarily close to an integral over the constants of the Faber--Krahn inequalities on the nilpotent approximations $\mathcal G_x$; see Proposition \ref{faberkrahnbm}. This result is in the spirit of a result of B\'erard--Meyer \cite[Part II, Lemma 16]{BerMe}, who have shown that on a Riemannian manifold the Faber--Krahn inequality holds on subsets of small measure with a constant that is arbitrarily close to the constant in the Faber--Krahn inequality on $\R^n$. We emphasize, however, that in the case of a Riemannian manifold the model space is the same at each point, namely $\R^n$, while in the sub-Riemannian setting the approximating model spaces $\mathcal G_x$ may vary with $x$. Our techniques are quite different from those employed in~\cite{BerMe} and its generalizations, e.g., in \cite{DFV}.\\

Combining our result about Faber--Krahn inequalities with M\'etivier's Weyl-type formula, we obtain a sufficient condition for the validity of a Pleijel-type bound; see Theorem \ref{t:manifold}, which is the main result of the first part of this paper. The upper bound on $\limsup_{k\to\infty} \nu_k/k$ is of the form
\begin{equation}
	\label{eq:mainintro}
	\left( \int_M (c_x^{\rm FK})^{-\frac Q2} \,d\mu(x) \right) \left( \int_M c_x^{\rm Weyl}\,d\mu(x) \right)^{-1},
\end{equation}
where $c_x^{\rm FK}$ is a certain local Faber--Krahn constant, defined in terms of the nilpotentization of $-\Delta_{\bf X}^{M,\mu}$ at $x\in M$, and $c_x^{\rm Weyl}$ is a certain local Weyl constant, defined in terms of the same nilpotentization\footnote{This strengthening of our original result \cite{FrHe1,FrHe2} is due to Y.~Colin de Verdi\`ere, who kindly allowed us to include his argument.}. The precise definitions will be given below. 

The role of the Borel measure $D\mapsto \int_D c_x^{\rm Weyl}\,d\mu(x)$ on $M$ is emphasized in \cite{CHT1}, where it is called the \emph{Weyl measure}. Similarly, here we introduce what may be called the \emph{Faber--Krahn measure} $D\mapsto \int_D (c_x^{\rm FK})^{-\frac Q2} \,d\mu(x)$.

It is interesting to compare \eqref{eq:mainintro} with the Pleijel formula \eqref{PlConst}, to which it reduces in the case of open subsets of $\R^n$. More generally, in the Riemannian case (where $p=n$ and where $\mu$ is the Riemannian volume measure) the expression \eqref{eq:mainintro} reduces to \eqref{PlConst} and we recover the result of B\'erard and Meyer \cite{BerMe}. However, our result is already new in this case when $\mu$ is different from the Riemannian volume measure. In the general sub-Riemannian case, the integration with respect to the measure $\mu$ takes into account that the model spaces $\mathcal G_x$ may vary with the point $x\in M$. In this respect it is also interesting to note that \eqref{eq:mainintro} depends on $M$ and the vector fields $X_1,\ldots,X_p$, but does \emph{not} depend on the measure $\mu$. Indeed, both integrals in \eqref{eq:mainintro} do not depend on $\mu$; see Remark \ref{rem:indepmeas}.

According to \eqref{eq:mainintro}, a sufficient condition for the validity of Pleijel's theorem is the following bound on the ``local Pleijel constants'':
$$
\big(c_x^{\rm FK} \big)^{-\frac Q2} \, \big( c_x^{\rm Weyl} \big)^{-1} < 1
\qquad\text{for all}\ x\in M \,;
$$
see Corollary \ref{maincor}. We emphasize that the latter condition involves the corresponding Faber--Krahn constants for Dirichlet realizations of sub-Laplacians in open set of nilpotent groups.

\medskip

This provides a motivation for the second part of this paper, which is devoted to the validity of a Pleijel-type bound for the nilpotent groups $\mathbb H_n \times \mathbb R^k$, where $\mathbb H_n $ is the Heisenberg group of homogeneous dimension $2n+2$ and where $k\in\N_0$. While we have not been able to establish a Pleijel-type theorem in the most important case $(n,k)=(1,0)$, we have succeeded in proving it if one admits the celebrated conjecture of Pansu concerning the isoperimetric constant on $\mathbb H_1$; see Proposition \ref{pansupleijel}. We also have positive results under the assumption that the homogeneous dimension $Q=2n+2+k$ of $\mathbb H_n\times\R^k$ is sufficiently large. Indeed, the validity of Pleijel's theorem remains open for only four pairs $(n,k)$; see Theorem \ref{maincomp}, which is the main result of the second part of this paper.
 
\medskip
 
\subsection{Organization}~\\
 This paper is divided into two main parts.
 
 The first part begins in Section \ref{srest} with a somewhat technical result concerning the restriction of a Sobolev function to a nodal domain, which plays an important role in the arguments of Courant and Pleijel and their generalizations. The main result of the first part is Theorem \ref{t:manifold} in Section \ref{sec:mainsubriem}, which gives a sufficient condition for the validity of Pleijel's theorem via nilpotentization. In the preceding Section \ref{s1} we discuss the setting of this theorem and give the proof in the following Section \ref{sec:mainsubriemproof}. The theorem is illustrated by an example in Section \ref{AppA}.
 
The second part of this paper deals with the case of the Heisenberg group $\mathbb H_n$ and, more generally, with $\mathbb H_n\times\R^k$ with $k\in\N_0$. The main results of that part are summarized in Section \ref{sec:mainhnrk}; see, in particular, Theorem \ref{maincomp} and Proposition \ref{pansupleijel}. The proofs of these results rely on an explicit form of the Weyl asymptotics, treated in Section \ref{sec:weylhnrk}, and bounds on the Faber--Krahn constants. For the latter, we proceed via two different techniques that are spread out over Sections \ref{sec:fksob} (continued in Section \ref{s10}) and \ref{sec:isoperimetric}.

In two appendices we discuss an assumption appearing in the main result of the first part (Appendix \ref{AppA2}) and review an approach to Weyl asymptotics (Appendix \ref{AppC}).

\medskip

\subsection{Acknowledgements}~\\
Thanks to D.\ Mangoubi for inviting  the second author to propose open questions in an  Oberwolfach workshop in August 2023 and for previous discussions. Great thanks also to C.~Letrouit  for stimulating discussions about his joint paper with S. Eswarathasan and sub-Riemannian geometry when we start to attack these questions. The comments of Y.~Colin de Verdi\`ere, to whom we are most grateful, on a preprint version \cite{FrHe1} of this paper helped to significantly strenghten our results. Thanks to J.~Viola and F.~Nicoleau for helping us with the use of Mathematica or Wolframalpha. Thanks to V.~Colin for his help in topology. We are also grateful to L.~Hillairet and P.~Pansu for their help in the understanding of sub-Riemannian results and to L.~Capogna and N.~Garofalo for exchanges about the isoperimetric problem on the Heisenberg group. Thanks to E.~Milman for discussions on Faber--Krahn constants.

\newpage

\part{Courant's and Pleijel's bound in the sub-Riemannian case}

\section{The restriction of a Sobolev function to a nodal domain}\label{srest}

\subsection{Presentation}~\\
In this section we will show that the restriction of a continuous Sobolev function to a nodal domain satisfies the boundary values zero in the sense of Sobolev functions. This holds under remarkably weak assumptions on the vector fields used to define the Sobolev spaces. The precise formulation can be found in Theorem~\ref{restriction} below.

That this restriction property requires a proof seems to have been overlooked by R.~Courant in his original proof of the Courant nodal theorem, but  this was proved in the case when $M$ is a compact manifold in \cite{BerMe}. In the case with boundary, the proof goes in the same way when the Dirichlet boundary problem is regular (see \cite{An} or more recently  \cite{LMP}). Without this assumption, it was proved in a paper by M\"uller-Pfeiffer \cite{MP} that seems to be little known in the spectral theory community. An alternative proof, due to D.\ Bucur, is presented in the book \cite{LMP}.  Here we show that the proof in \cite{MP} can be generalized to the sub-Riemannian setting. Notice that Bucur's proof could also have been adapted.

\subsection{Setting}\label{ss2.2}~\\
Let $M$ be a $C^1$-manifold (without boundary) of dimension $n$ and let $\mu$ be a $C^1$ nonnegative Borel measure on $M$. (By $C^1$-measure we mean that in each chart the measure is absolutely continuous with a positive $C^1$-density.) Let $X_1,\ldots,X_p$ be $C^1$-vector fields on $M$.

Notice that in this section our results are established under weaker assumptions than in the rest of this paper. In particular, we emphasize that we do not make an assumption on the span of the vector fields. In fact, even the trivial case $p=0$, where there are no vector fields at all, is formally included in our analysis.

We define first-order differential operators $X_1^\star,\ldots,X_p^\star$ by
$$
\int_M \psi X_j \phi\,d\mu = \int_M \phi X_j^\star \psi\,d\mu 
\qquad\text{for all}\ \phi,\psi\in C^1_c(M) \,.
$$
We define $S^1(M)$ to be the set of all functions $u\in L^2(M)$ for which there are $f_1,\ldots f_p\in L^2(M)$ such that for $j=1,\ldots,p$ one has
$$
\int_M u X_j^\star\phi\,d\mu = \int_M \phi f_j\,d\mu
\qquad\text{for all}\ \phi\in C^1_c(M) \,.
$$
The $f_j$'s are necessarily unique and we denote them by $f_j=:X_j u$, thus extending the usual notation in case $u\in C^1_c(M)$. We set
$$
q[u] := \sum_{j=1}^p \|X_j u\|_{L^2}^2 
\qquad\text{for all}\ u\in S^1(M)\,.
$$
One easily sees that $S^1(M)$ is a vector space that is complete with respect to the norm $\sqrt{q[u] + \|u\|_{L^2}^2}$. We denote by $S^1_0(M)$ the closure of $C^1_c(M)$ in $S^1(M)$. When $\Omega$ is an open subset of $M$, then $\Omega$ itself is a manifold and therefore the spaces $S^1(\Omega)$ and $S^1_0(\Omega)$ are defined. We also introduce the space $S^1_{\rm loc}(M)$ of functions such that $u|_\Omega \in S^1(\Omega)$ for any open $\Omega\subset M$ with $\overline\Omega$ compact.

We emphasize that the spaces $S^1(M)$ and $S^1_0(M)$ depend on $X_1,\ldots,X_p$ and, if $M$ is not compact, on $\mu$, even if this is not reflected in the notation. Note that if $p=0$, then $S^1(M)=S^1_0(M)=L^2(M)$.
 If $M=\mathbb R^n$ (or an open set in $\mathbb R^n$), $p=n$, and $X_i=\partial_{x_i}$, then we recover the classical Sobolev spaces.\\
 We begin by recording truncation properties, which will play an important role in our arguments.

\begin{lemma}\label{truncation}
	If $u,v\in S^1_{\rm loc}(M)$, then $\max\{u,v\}, \min\{u,v\}\in S^1_{\rm loc}(M)$ and
	$$
	X_j \max\{u,v\}(x) = \begin{cases}
		X_j u(x) & \text{a.e.\ in}\ \{ x\in M:\ u(x)\geq v(x)\} \,,\\ 
		X_jv(x) & \text{a.e.\ in}\ \{ x\in M:\ u(x)\leq v(x)\} \,,
	\end{cases}
	$$
	and 
	$$
	X_j \min\{u,v\}(x) = \begin{cases}
		X_j v(x) & \text{a.e.\ in}\ \{ x \in M:\ u(x)\geq v(x) \} \,,\\ 
		X_ju(x) & \text{a.e.\ in}\ \{ x \in M:\ u(x)\leq v(x) \} \,.
	\end{cases}
	$$
	In particular, if $u,v\in S^1(M)$, then $\max\{u,v\}, \min\{u,v\}\in S^1(M)$.
\end{lemma}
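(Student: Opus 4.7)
The plan is to reduce the full lemma to two facts for a single function $w \in S^1_\loc(M)$: first, the composition statement that $w^+ := \max\{w,0\} \in S^1_\loc(M)$ with $X_j(w^+) = \1_{\{w > 0\}} X_j w$ almost everywhere; and second, the Stampacchia-type identity that $X_j w = 0$ almost everywhere on $\{w = 0\}$. Granting both (applied to $w = u - v$), the conclusion for $\max\{u,v\}$ follows by linearity from
\[
\max\{u,v\} = v + (u-v)^+,
\]
where on $\{u \geq v\}$ the contribution $\1_{\{u-v > 0\}}(X_j u - X_j v)$ combines with $X_j v$ to give $X_j u$; the value on $\{u = v\}$ is reconciled by the Stampacchia identity. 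The formula for $\min\{u,v\}$ follows from $\min\{u,v\} = u + v - \max\{u,v\}$.

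For the composition statement I would work in a local chart, writing $d\mu = \rho\, dx$ with $\rho \in C^1$ strictly positive, and treating $X_j$ as a first-order operator with $C^1$ coefficients. Set $w_\epsilon := w * \eta_\epsilon$ for a standard mollifier $\eta_\epsilon$; by Friedrichs' commutator lemma (valid thanks to the $C^1$ regularity of the coefficients and of $\rho$) one obtains $X_j w_\epsilon \to X_j w$ in $L^2_\loc$. Next choose $F_\delta \in C^1(\R)$ with $F_\delta(0) = 0$, $0 \leq F_\delta' \leq 1$, $F_\delta(t) \to t^+$, and $F_\delta'(t) \to \1_{\{t > 0\}}$ pointwise for all $t \neq 0$. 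The classical chain rule gives $X_j(F_\delta(w_\epsilon)) = F_\delta'(w_\epsilon)\, X_j w_\epsilon$; pairing against a test function $\phi \in C^1_c$ and letting $\epsilon \to 0$ (uniform Lipschitz control on $F_\delta$ on the left, dominated convergence combined with $L^2$-convergence of $X_j w_\epsilon$ on the right) produces the analogous identity for $F_\delta(w)$. Letting $\delta \to 0$ and again invoking dominated convergence---with the contribution on $\{w = 0\}$ killed by the Stampacchia identity---yields $X_j(w^+) = \1_{\{w > 0\}} X_j w$.

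The Stampacchia property is the main obstacle and the only place where one must step outside the weak-derivative formalism. I would prove it by locally straightening $X_j$ via the flow-box theorem, which is available under $C^1$ regularity at any point where $X_j \neq 0$; this reduces the claim to the classical one-dimensional fact that a function in $W^{1,2}(\R)$ has vanishing distributional derivative almost everywhere on its zero set, itself a consequence of absolute continuity on lines. At points where $X_j$ vanishes in a neighborhood, the claim is trivial. A less geometric alternative is to adapt the argument of M\"uller--Pfeiffer \cite{MP}, which proceeds entirely through integration by parts and should transfer to the present sub-Riemannian setting essentially unchanged, since only the first-order nature of the $X_j$ and the defining identity of $S^1(M)$ enter.
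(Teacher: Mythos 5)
Your overall strategy parallels the paper's: reduce $\max$ and $\min$ to a single truncation of $w:=u-v$, prove a chain-rule statement by mollification and $C^1$ approximation, and isolate a Stampacchia-type identity $X_j w=0$ a.e.\ on $\{w=0\}$. (You write $\max\{u,v\}=v+(u-v)^+$ while the paper uses $\tfrac12(u+v+|u-v|)$ with the symmetric approximation $\sqrt{t^2+\epsilon^2}$; these are equivalent reductions, and the composition step via Friedrichs mollification and the $C^1$ chain rule is fine.) The substantive divergence is your proof of the Stampacchia identity by flow-box straightening, and that step has a gap.

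Your case split --- flow-box near points where $X_j\neq 0$, triviality where $X_j$ vanishes on a full neighborhood --- leaves out $\partial\{X_j=0\}$, which for a merely $C^1$ vector field can have positive measure. Concretely, take $X_j=h(x_1)\partial_{x_1}$ with $h\in C^1(\R)$ vanishing exactly on a fat Cantor set $K$ (such $h$ exist: glue $C^\infty$ bumps of the form $(x-a)^2(b-x)^2/(b-a)^2$ on the complementary gaps $(a,b)$). Then $\{X_j=0\}=K\times\R^{n-1}$ has positive measure and empty interior, so neither of your two cases applies there. The conclusion $X_jw=0$ a.e.\ on that set is still true --- for instance because smooth functions are dense in $S^1_{\rm loc}$ in the Meyers--Serrin sense and $X_j\phi$ vanishes pointwise on $\{X_j=0\}$ for $\phi\in C^1$ --- but your argument as written does not establish it. The simpler and more robust route, which is the one underlying \cite[Lemma 1.19]{HeKiMa} and which the paper invokes, is to use a one-sided $C^1$ approximation of $t\mapsto t^+$, for example $F_\delta(t)=(\sqrt{t^2+\delta^2}-\delta)\,\1_{\{t>0\}}$, whose derivative $F_\delta'(t)=\tfrac{t}{\sqrt{t^2+\delta^2}}\1_{\{t>0\}}$ lies in $[0,1]$ and converges pointwise to $\1_{\{t>0\}}$ at \emph{every} $t$, including $t=0$. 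This gives $X_jw^+=\1_{\{w>0\}}X_jw$ with no appeal to the Stampacchia identity, and Stampacchia then follows as a corollary from $X_jw=X_jw^+-X_j(-w)^+=(\1_{\{w>0\}}+\1_{\{w<0\}})X_jw$. Choosing $F_\delta$ correctly therefore removes both the near-circularity you noted (using Stampacchia inside the $\delta\to0$ limit) and the need for the flow-box detour, and the argument then works uniformly, independently of where $X_j$ degenerates.
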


\begin{proof}
	This lemma is well known (see, e.g., \cite[Lemma 3.5]{GaNh} when the underlying manifold is $\R^k$). For the sake of completeness we give an outline of the main steps of the proof.
	
 First, one shows that if $\eta\in C^1(\R)$ with $\eta'$ bounded, then $u\in S^1_{\rm loc}(M)$ implies $\eta(u)\in S^1_{\rm loc}(M)$ with $X_j\eta(u) = \eta'(u)X_j u$. (Here one can argue as in \cite[Theorem 1.18]{HeKiMa}.) Next, one applies this result to $\eta(t)=\sqrt{t^2+\epsilon^2}$ and deduces, after passing to the limit $\epsilon\to 0$, that $u\in S^1_{\rm loc}(M)$ implies $|u|\in S^1_{\rm loc}(M)$ with 
	$$
	X_j|u|(x) = 
	\begin{cases}
		X_j u(x) & \text{a.e.\ in}\ \{ x\in M:\ u(x)\geq 0\} \,,\\
		- X_ju(x) & \text{a.e.\ in}\ \{ x\in M:\ u(x)\leq 0 \} \,.
	\end{cases}
	$$
	Note that this shows, in particular, that $X_j u(x)=0$ a.e. in $\{x\in M:\ u(x)=0\}$. (Here one can argue similarly as in \cite[Lemma 1.19]{HeKiMa}.) Since
	\begin{align*}
		\max\{u,v\} & = \tfrac12(u(x)+v(x) + |u(x)-v(x)|) \,, \\
		\min\{u,v\} & = \tfrac12(u(x)+v(x) - |u(x)-v(x)|) \,,
	\end{align*}
	this implies the assertion of the lemma.
\end{proof}


\medskip

\subsection{The restriction theorem}~\\
The following theorem is the main result of this section.

\begin{theorem}\label{restriction}
	Let $\Omega\subset M$ be open and let $u\in S^1_0(\Omega)\cap C(\Omega)$. If $\omega$ is a connected component of $\{ x\in\Omega:\ u(x)\neq 0\}$, then $u|_\omega \in S^1_0(\omega)$.
\end{theorem}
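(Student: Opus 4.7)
The plan is to follow the M\"uller--Pfeiffer strategy~\cite{MP} and adapt it to the present sub-Riemannian setting. Without loss of generality assume $u>0$ on $\omega$ (otherwise replace $u$ by $-u$). Fix $(\phi_n)\subset C^1_c(\Omega)$ with $\phi_n\to u$ in $S^1(\Omega)$, and for $\epsilon>0$ set $u_\epsilon:=(u-\epsilon)^+$. By Lemma~\ref{truncation} (applied to $u$ and the constant $\epsilon$), $u_\epsilon\in S^1(\Omega)$ with $X_j u_\epsilon=X_j u\cdot\1_{\{u>\epsilon\}}$ a.e.; combining this with the vanishing of $X_j u$ on level sets, a dominated-convergence argument yields $u_\epsilon|_\omega\to u|_\omega$ in $S^1(\omega)$ as $\epsilon\to 0^+$. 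Since $S^1_0(\omega)$ is closed in $S^1(\omega)$, it suffices to prove that $u_\epsilon|_\omega\in S^1_0(\omega)$ for each fixed $\epsilon>0$.

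The central construction is a smooth cutoff that localizes from $\Omega$ to $\omega$ without spoiling the gradient of $u_\epsilon$. The key observation is that $A:=\{u\geq 3\epsilon/4\}\cap\omega$ is closed in $\Omega$: any $\Omega$-limit of points in $A$ lies in $\overline\omega\cap\{u\geq 3\epsilon/4\}$, but continuity of $u$ together with $u=0$ on $\partial\omega\cap\Omega$ forces the limit back into $\omega$. Hence $A$ and $B:=(\Omega\setminus\omega)\cup\{u\leq\epsilon/2\}$ are disjoint closed subsets of $\Omega$, and smooth Urysohn on the paracompact $C^1$-manifold $\Omega$ supplies $\chi\in C^1(\Omega)$ with $0\leq\chi\leq 1$, $\chi\equiv 1$ on $A$, $\chi\equiv 0$ on $B$, and $\supp\chi\subset\omega$. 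A case check on $\omega\cap\{u>\epsilon\}$, $\omega\cap\{u\leq\epsilon\}$, and $\Omega\setminus\omega$ gives $\chi\,u_\epsilon=u_\epsilon\,\1_\omega$ on $\Omega$. Now smooth $(t-\epsilon)^+$ to $G_\delta\in C^1(\R)$ with $G_\delta\equiv 0$ on $(-\infty,\epsilon-\delta]$, $G_\delta(t)=t-\epsilon$ on $[\epsilon+\delta,\infty)$, and $0\leq G'_\delta\leq 1$, and set $\psi_{n,\delta}:=\chi\cdot G_\delta(\phi_n)$. The support properties force $\psi_{n,\delta}\in C^1_c(\omega)$. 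Expanding $X_j\psi_{n,\delta}=(X_j\chi)\,G_\delta(\phi_n)+\chi\,G'_\delta(\phi_n)\,X_j\phi_n$ and comparing with $X_j(\chi u_\epsilon)=(X_j\chi)\,u_\epsilon+\chi\,\1_{\{u>\epsilon\}}X_j u$, one checks $\psi_{n,\delta}\to \chi u_\epsilon=u_\epsilon\1_\omega$ in $S^1(\Omega)$ along a suitable diagonal sequence $(n_k,\delta_k)$; restriction to $\omega$ then yields $u_\epsilon|_\omega\in S^1_0(\omega)$.

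The hard part is the $L^2$-convergence $G'_\delta(\phi_n)X_j\phi_n\to\1_{\{u>\epsilon\}}X_j u$. The decomposition
\[
G'_\delta(\phi_n)X_j\phi_n-\1_{\{u>\epsilon\}}X_j u = G'_\delta(\phi_n)(X_j\phi_n-X_j u)+(G'_\delta(\phi_n)-\1_{\{u>\epsilon\}})X_j u
\]
reduces the task to two pieces: the first is controlled by $\|X_j\phi_n-X_j u\|_{L^2}\to 0$ since $\|G'_\delta\|_\infty\leq 1$; for the second, after extracting an a.e.-convergent subsequence of $\phi_n$ one has $G'_\delta(\phi_n)\to\1_{\{u>\epsilon\}}$ a.e.\ on $\{u\neq\epsilon\}$, while Lemma~\ref{truncation} applied to $u-\epsilon$ provides $X_j u=0$ a.e.\ on $\{u=\epsilon\}$, and dominated convergence with majorant $2|X_j u|$ closes the argument. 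All remaining steps---the Urysohn construction, the product rule for $S^1$ functions against $C^1$ cutoffs, and the passage from $\Omega$ to $\omega$---are routine.
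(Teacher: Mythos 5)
You take a genuinely different route from the paper: the paper forms $v_j=\min\{z,(\phi_j)_+\}|_\omega$ with $z=u\1_\omega$, obtains $v_j\in S^1_0(\omega)$ from Lemma~\ref{s10easy}, derives a uniform $S^1$-bound from Lemma~\ref{truncation}, and concludes by weak compactness; you instead try to construct a \emph{strongly} convergent sequence in $C^1_c(\omega)$ via a fixed Urysohn cutoff $\chi$ supported in $\omega$, and do not invoke Lemma~\ref{s10easy} at all.

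The gap sits in the Leibniz term $(X_j\chi)\,G_\delta(\phi_n)$, which you dismiss as routine but which is actually the crux. You need it to converge to $(X_j\chi)\,u_\epsilon=0$ in $L^2(\omega)$, or at the very least to stay bounded so that one can fall back on weak compactness. A $C^1$ Urysohn function separating $A=\{u\geq 3\epsilon/4\}\cap\omega$ from $B=(\Omega\setminus\omega)\cup\{u\leq\epsilon/2\}$ need not have bounded derivatives $X_j\chi$ on $\Omega$: $A$ and $B$ are only disjoint closed subsets of $\Omega$, and since $u\in S^1_0(\Omega)$ carries no pointwise vanishing at $\partial\Omega$, they may accumulate at a common boundary point of $\Omega$, forcing $|X_j\chi|$ to blow up there. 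Consequently the estimate $\|(X_j\chi)(G_\delta(\phi_n)-G_\delta(u))\|_{L^2(\omega)}\leq \|X_j\chi\|_{L^\infty}\|\phi_n-u\|_{L^2}$, which your convergence claim implicitly relies on, is unavailable, and $L^2$-smallness of $\phi_n-u$ does not control a product against an unbounded weight; the same obstruction blocks even a uniform $S^1$-bound for $\psi_{n,\delta}$. This is precisely what the paper's construction sidesteps: $\min\{z,(\phi_j)_+\}$ has a.e.\ derivatives equal to $X_jz$ or $X_j(\phi_j)_+$ by Lemma~\ref{truncation}, so there is no Leibniz term at all and the $S^1$-bound is automatic, while membership in $S^1_0(\omega)$ is supplied by the mollification-based Lemma~\ref{s10easy} rather than by any single hand-built cutoff. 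To repair your plan you would need either to justify a choice of $\chi$ with uniformly bounded $X_j\chi$ (false in general), or to replace the explicit cutoff by an appeal to Lemma~\ref{s10easy}, at which point the argument essentially becomes the paper's. The remaining ingredients in your sketch---the reduction to $u_\epsilon$, the closedness of $A$ via the connected-component argument, and the dominated-convergence treatment of $\chi G'_\delta(\phi_n)X_j\phi_n$---are sound.
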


The proof below of this theorem is essentially taken from \cite{MP}. It is simpler than Bucur's proof presented in \cite{LMP} and, in particular, avoids the notion of capacity. Similar to that proof, it relies on truncation properties of Sobolev functions and on the following simple lemma.

\begin{lemma}\label{s10easy}
	Let $\Omega\subset M$ be open and let $u\in S^1(\Omega)$. Assume that $u$ vanishes outside of a compact set of $M$ and vanishes on $\partial\Omega$ in the sense that for any $y\in\partial\Omega$ and any $\epsilon>0$ there is a neighborhood $U$ of $y$ in $M$ such that $|u|<\epsilon$ a.e.\ in $U\cap\Omega$. Then $u\in S^1_0(\Omega)$.
\end{lemma}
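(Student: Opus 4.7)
The plan is to approximate $u$ by a sequence of truncations $u_\epsilon$ that have compact support inside $\Omega$, each of which can then be mollified in local coordinates to obtain an approximation in $C^1_c(\Omega)$. Concretely, for $\epsilon>0$ I would set
\[
u_\epsilon := \max\{u-\epsilon,0\} + \min\{u+\epsilon,0\} \,,
\]
so that $u_\epsilon$ coincides with $u$ where $|u|>\epsilon$ and vanishes where $|u|\le\epsilon$. Two applications of Lemma~\ref{truncation} give $u_\epsilon\in S^1(\Omega)$ with
\[
X_j u_\epsilon = (X_j u)\,\1_{\{|u|>\epsilon\}} \quad \text{a.e.}
\]

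The decisive point is that $u_\epsilon$ has compact support in $\Omega$. Let $W$ be the union of all open subsets $U$ of $\Omega$ on which $|u|<\epsilon$ a.e.; then $u_\epsilon=0$ a.e.\ on $W$, so $\supp u_\epsilon\subset\Omega\setminus W$. The boundary hypothesis guarantees that each $y\in\partial\Omega$ has a neighborhood $U$ in $M$ with $U\cap\Omega\subset W$, so the $M$-closure of $\Omega\setminus W$ is disjoint from $\partial\Omega$; the compact-support hypothesis on $u$ places $\Omega\setminus W$ inside the compact set where $u$ does not vanish. Combining these, $\supp u_\epsilon$ is a compact subset of $\Omega$.

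Next I would approximate $u_\epsilon$ by $C^1_c(\Omega)$ functions. I cover $\supp u_\epsilon$ by finitely many coordinate charts, choose a subordinate partition of unity $(\chi_i)$, and convolve each $\chi_i u_\epsilon$ with a standard Euclidean mollifier $J_\delta$ inside the chart (extended by zero). Friedrichs' commutator lemma—applicable because the vector fields $X_j$ and the density of $\mu$ are $C^1$—yields $[X_j,J_\delta]\to 0$ strongly in $L^2$, so $J_\delta(\chi_i u_\epsilon)\to\chi_i u_\epsilon$ in the $S^1$-norm. Summing gives $u_\epsilon\in S^1_0(\Omega)$. Finally, letting $\epsilon\downarrow 0$, dominated convergence yields $u_\epsilon\to u$ in $L^2(\Omega)$, and the explicit formula for $X_j u_\epsilon$, together with the fact (established inside Lemma~\ref{truncation}) that $X_j u=0$ a.e.\ on $\{u=0\}$, yields $X_j u_\epsilon\to X_j u$ in $L^2(\Omega)$. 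Closedness of $S^1_0(\Omega)$ in $S^1(\Omega)$ then gives $u\in S^1_0(\Omega)$.

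The main obstacle I expect is the mollification step on a bare $C^1$-manifold: one has to verify that the usual Friedrichs commutator argument still closes with vector fields whose coefficients (in local coordinates) are only $C^1$ and with a measure density that is only $C^1$. These are, however, exactly the minimal hypotheses that make $[X_j,J_\delta]\to 0$ work; all other steps are routine consequences of Lemma~\ref{truncation} and dominated convergence.
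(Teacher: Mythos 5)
Your proof is correct and follows essentially the same route as the paper: truncate $u$ to a function $u_\epsilon$ with compact support in $\Omega$, approximate $u_\epsilon$ by $C^1_c(\Omega)$ functions via local mollification (the paper cites a Meyers--Serrin type result from [GaNh] where you sketch the Friedrichs commutator argument), and pass to the limit $\epsilon\downarrow 0$ using the truncation identity $X_j u_\epsilon=(X_ju)\1_{\{|u|>\epsilon\}}$ and the fact that $X_ju=0$ a.e.\ on $\{u=0\}$. The only cosmetic difference is that your two-sided truncation handles both signs of $u$ simultaneously, whereas the paper first reduces to $u\geq 0$ via $u_\pm$; in fact your exposition also makes explicit the final $\epsilon\downarrow 0$ limit, which the paper leaves implicit.
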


\begin{proof}[Proof of Lemma \ref{s10easy}]
	This is modeled after \cite[Lemma 1.26]{HeKiMa}; see also \cite[Lemma 1]{HeNoSw}. We first note that we may assume that $\overline\Omega$ is compact. Otherwise, we consider an open set $\widetilde \Omega$ with compact closure such that $u$ vanishes almost everywhere in $\Omega\setminus\widetilde \Omega$ and apply the result on $\Omega\cap\widetilde \Omega$. Further, by considering $u_+$ and $u_-$ separately (using Lemma \ref{truncation}), we may assume that $u\geq 0$. Consider $u_\epsilon:=(u-\epsilon)_+$. We shall show that $u_\epsilon\in S^1_0(\Omega)$. Since $u_\epsilon\to u$ in $S^1(\Omega)$, this will imply the assertion.

	We claim that the support of $u_\epsilon$ is a compact subset of $\Omega$. Indeed, any $y\in\partial\Omega$ has an open neighborhood $U_y$ in $M$ with $u<\epsilon$ in $U_y\cap\Omega$. By compactness of $\partial\Omega$, a finite union of these $U_y$ cover $\partial\Omega$ and $u_\epsilon$ vanishes almost everywhere in the intersection of this finite union with $\Omega$.
	
	By the Meyers--Serrin type result in \cite[Theorem 1.13]{GaNh} we see that $u_\epsilon$ can be approximated in $S^1(\Omega)$ by functions from $C^1(\Omega)\cap S^1(\Omega)$. The result in that reference is stated for open subsets of $\R^n$, but since it is a local result, the result remains valid in our situation by localizing via a partition of unity to coordinate neighborhoods. The proof in \cite{GaNh} proceeds by convolution with a compactly supported function. The fact that the support of $u_\epsilon$ is a compact subset of $\Omega$ implies that the approximating functions belong to $C^1_c(\Omega)$. This proves that, indeed, $u_\epsilon\in S^1_0(\Omega)$.
\end{proof}

\begin{proof}[Proof of Theorem \ref{restriction}]
	For the sake of concreteness let us assume that $u> 0$ in  $\omega$, the argument in the opposite case being similar.
	
	\medskip
	
	\emph{Step 1.} Let
	$$
	z:= \begin{cases} u & \text{in}\ \omega \,,\\ 0 & \text{in}\ \Omega\setminus\omega \,. \end{cases}
	$$
	We claim that $z\in S^1_{\rm loc}(\Omega)$.
	
	Let $\psi\in C^1_c(\Omega)$. It is easy to see that $z\in C(\Omega)$ and consequently also $\psi z\in C(\Omega)$. Moreover, $\psi z=0$ on $(\Omega\setminus\omega)\cup(\Omega\setminus\supp\psi)$. Thus $\psi z|_\omega = \psi u|_\omega$ is in $S^1(\omega)\cap C(\omega)$ and extends continuously to $\partial\omega$, where it vanishes. By Lemma \ref{s10easy} this implies that $\psi z|_\omega \in S^1_0(\omega)$. This implies that $\psi z\in S^1_0(\Omega)$. (Indeed, since $\psi z|_\omega\in S^1_0(\omega)$, it can be approximated in $S^1(\omega)$ by functions in $C^1_c(\omega)$. Extending these functions by zero to $\Omega$ gives functions in $C^1_c(\Omega)$ and, since $\psi z$ vanishes in $\Omega\setminus\omega$, these functions approximate $\psi z$ in $S^1(\Omega)$, so $\psi z\in S^1_0(\Omega)$.) Since $\psi\in C^1_c(\Omega)$ is arbitrary, we deduce $z\in S^1_{\rm loc}(\Omega)$, as claimed.
	
	\medskip
	
	\emph{Step 2.} Let $(\phi_j)\subset C^1_c(\Omega)$ such that $\phi_j\to u$ in $S^1(\Omega)$ (such functions exist since $u\in S^1_0(\Omega)$) and set
	$$
	v_j := \min\{ z, (\phi_j)_+ \}|_\omega \,.
	$$
	We claim that $(v_j)$ is a bounded sequence in $S^1_0(\omega)$ and that it converges to $u|_\omega$ in $L^2(\omega)$. 
	
	To prove this, we consider $w_j:=\min\{ z, (\phi_j)_+ \}$. By Step 1 and truncation properties of Sobolev spaces (Lemma \ref{truncation}) we deduce that $w_j\in S^1_{\rm loc}(\Omega)$. Moreover, these truncation properties also imply that
	\begin{align*}
		\sum_{i=1}^p \int_\omega |X_i v_j|^2\,d\mu & = \sum_{i=1}^p \int_\omega |X_i w_j|^2\,d\mu \\
		& = \sum_{i=1}^p \int_{\{u \leq \phi_j \}} |X_i u|^2\,d\mu + \sum_{i=1}^p \int_{\{ u> \phi_j >0 \}} |X_i \phi_j|^2\,d\mu \\
		& \leq \sum_{i=1}^p \int_\Omega (|X_i u|^2 + |X_i \phi_j|^2)\,d\mu \,.
	\end{align*}
	Since $u\in S^1(\Omega)$ and since $(\phi_j)$ converges in $S^1(\Omega)$, the right side is uniformly bounded.
	
	Concerning the $L^2$-norm, we find
	$$
	\int_\omega (v_j - u)^2\,d\mu = \int_{\omega\cap\{ (\phi_j)_+< u\}} (u - (\phi_j)_+ )^2\,d\mu
	\leq \int_\Omega (u-\phi_j)^2\,d\mu \to 0 \,. 
	$$
	
	We have shown, in particular, that $v_j\in S^1(\omega)$. It remains to prove that $v_j\in S^1_0(\omega)$. We note that $w_j$ is continuous in $\Omega$. Its restriction $v_j$ to $\omega$ extends continuously to $\partial\omega$ and vanishes there. (The vanishing on $\partial\omega\cap\Omega$ comes from the continuity of $u$. The vanishing on $\partial\omega\cap\partial\Omega$ comes from the compact support property of $\phi_j$.) Thus, again by Lemma \ref{s10easy}, we have $w_j|_\omega\in S^1_0(\omega)$.
	
	\medskip
	
	\emph{Step 3.} We can now finish the proof. Since $(v_j)$ is a bounded sequence in $S^1_0(\omega)$, after passing to a subsequence we may assume that it converges weakly in $S^1_0(\omega)$ to some $v\in S^1_0(\omega)$. In particular, it converges weakly in $L^2(\omega)$ to $v$. Meanwhile, by Step~ 2, it converges strongly in $L^2(\omega)$ to $u|_\omega$, so the uniqueness of the weak limit in $L^2(\omega)$ implies that $v=u|_\omega$ a.e. In particular, $u|_\omega\in S^1_0(\omega)$, as claimed.	
\end{proof}


\subsection{Application to Courant's theorem}\label{ss2.7}~\\
Using Theorem \ref{restriction} we can recover the results in \cite{EL} concerning the Dirichlet realization $-\Delta^{\Omega,\mu}_\Xb$ in an open set $\Omega$ of $M$, but without having to assume that $\partial \Omega$ is noncharacteristic for the system of vector fields $X_j$ (see Assumption 1.5 in \cite{EL} or Assumption ($H\Gamma$) in \cite{De}). We record this as follows.

\begin{theorem}\label{courant}
	For any $k\in \mathbb N$, any eigenfunction of $-\Delta_\Xb^{\Omega,\mu}$ with eigenvalue $\lambda_k$ has at most 
	$k+ {\rm mult}(\lambda_k)-1$ nodal domains, where ${\rm mult}(\lambda_k)$ denotes the multiplicity of $\lambda_k$.\\
	If, moreover, one of the two following assumptions holds
	\begin{itemize} 
		\item $n=2$\,,
		\item $M$, $\mu$ and $(X_1,\dots, X_p)$ are real-analytic\,,
	\end{itemize}
	then we get an upper bound by $k$.
\end{theorem}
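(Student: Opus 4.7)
The plan is to follow the classical three-step Courant blueprint---restriction, min-max, and unique continuation---now available in the sub-Riemannian setting thanks to Theorem~\ref{restriction}, which supplants the boundary regularity usually invoked at the restriction step. Fix an eigenfunction $u$ associated with $\lambda_k$ and list its nodal domains $\omega_1,\dots,\omega_N$. Setting $u_i:=u\mathbf{1}_{\omega_i}$, Theorem~\ref{restriction} (applied separately to the connected components of $\{u>0\}$ and $\{u<0\}$) places each $u_i$ in $S^1_0(\Omega)$. Testing the weak eigenvalue equation against $\phi=u_i$, together with the truncation identity of Lemma~\ref{truncation} (which gives $X_j u_i=\mathbf{1}_{\omega_i}X_j u$ a.e.), yields $q[u_i]=\lambda_k\|u_i\|^2$. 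Since the $u_i$ have pairwise disjoint supports, the same is true up to a null set for the $X_j u_i$, so for every linear combination $v=\sum_i c_i u_i$ one has $\|v\|^2=\sum c_i^2\|u_i\|^2$ and $q[v]=\lambda_k\|v\|^2$.

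The weak bound is then immediate: $u_1,\dots,u_N$ form an $N$-dimensional orthogonal subspace of $S^1_0(\Omega)$ on which the Rayleigh quotient is identically $\lambda_k$, so the min-max characterization gives $\lambda_N\le\lambda_k$, i.e., $N\le k+\mathrm{mult}(\lambda_k)-1$.

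For the sharper bound $N\le k$, assume for contradiction $N\ge k+1$ and let $\phi_1,\dots,\phi_{k-1}$ be orthonormal eigenfunctions for $\lambda_1,\dots,\lambda_{k-1}$. Inside the $(k+1)$-dimensional space $\mathrm{span}(u_1,\dots,u_{k+1})$, impose the $k-1$ orthogonality conditions $v\perp\phi_j$ ($j=1,\dots,k-1$) and the single linear constraint $c_{k+1}=0$; the solution space has dimension at least~$1$, so we obtain a nonzero $v=\sum_{i=1}^{k}c_i u_i$ satisfying $v\perp\phi_1,\dots,\phi_{k-1}$, $q[v]=\lambda_k\|v\|^2$, and $v\equiv 0$ on the nonempty open set $\omega_{k+1}$. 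Expanding $v$ in the eigenbasis of $-\Delta_\Xb^{\Omega,\mu}$ and using $\lambda_j\ge\lambda_k$ for $j\ge k$, the equality $q[v]=\lambda_k\|v\|^2$ forces every spectral component with $\lambda_j>\lambda_k$ to vanish, so $v$ is itself an eigenfunction for $\lambda_k$. Hypoellipticity makes $v$ smooth on $\Omega$, and the unique continuation principle---Watanabe~\cite{Wa} in dimension $2$, Bony~\cite{Bo} in the real-analytic category---then forces $v\equiv 0$, contradicting the nontriviality of the linear combination.

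The main obstacle is the restriction step: ensuring that $u|_{\omega_i}$ extended by zero belongs to $S^1_0(\Omega)$ without any a priori regularity or noncharacteristic assumption on $\partial\Omega$. This is precisely the content of Theorem~\ref{restriction}, and it is the reason the statement removes the noncharacteristic hypothesis used in \cite{EL,De}. The remaining ingredients---the Rayleigh-quotient identity on the span of the $u_i$ and the passage from equality in the min-max to ``genuine eigenfunction'' via eigenbasis expansion---are formal consequences of the variational setup and the truncation calculus of Lemma~\ref{truncation}.
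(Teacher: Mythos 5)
Your proof is correct and follows the same three-step outline that the paper indicates (rather than spells out): the restriction step via Theorem~\ref{restriction}, the Mangoubi min-max count to obtain the weak bound $k+\mathrm{mult}(\lambda_k)-1$ without unique continuation, and the passage to the sharp bound $k$ via unique continuation (Watanabe for $n=2$, Bony in the real-analytic case). Your reconstruction of the variational details---the Rayleigh identity $q[u_i]=\lambda_k\|u_i\|^2$ from Lemma~\ref{truncation}, the orthogonality from disjoint supports, and the forced vanishing of the high-energy spectral components---matches the intended argument.
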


The first part of the statement relies on a remark of D.\ Mangoubi that permits to avoid the use of the Unique Continuation Theorem (see \cite[Exercise 4.1.15]{LMP}). For the second part of the theorem, \cite{EL} refers in the first case to \cite{Wa}. For the second case corresponding to the standard statement of  Courant's nodal Theorem, the proof is based on a result of J.\ M.~Bony \cite{Bo}. For the proof of both parts of the theorem we apply our Theorem \ref{restriction}. The required continuity of eigenfunctions in $\Omega$ follows from \cite{Ho}.

\bigskip

Let us further discuss the assumption of being noncharacteristic in \cite{EL}. A characteristic point of $\partial \Omega$ relative to $\mathcal D_1$ is a point $x$ for which all the elements of  $\mathcal D_1(x)$ belong to the tangent space $T_x\partial \Omega$. The assumption that $\partial\Omega$ is noncharacteristic for $\mathcal D_1$ guarantees that eigenfunctions belong to $C^\infty(\overline{\Omega})$; see \cite{De}, \cite{EL} and the book \cite{DGN} (particularly its Chapter 3). This regularity allowed \cite{EL} to prove Theorem \ref{courant} (under their noncharacteristic assumptions). This extra assumption seems to be crucial for the regularity of eigenfunctions at the boundary\footnote{Notice that the question of the analyticity at the boundary (in case the sub-Laplacians are hypoelliptic analytic in $\Omega$) seems open.}. Remarkably, however, it is not necessary for the validity of Theorem \ref{restriction}.

The condition that $\partial\Omega$ is noncharacteristic for $\mathcal D_1$ may appear rather strict. Examples where this condition holds in the case of the Heisenberg group are given in \cite{DGN,MM}. An example in $\mathbb H$  is given (see \cite[Example 3.4]{MM} with $k=1$) by the domain
  $$
  (\sqrt{x^2+y^2} -2)^2 + 16 \, t^2  < 1\,.
  $$
  Note that for this example $\partial \Omega$ is homeomorphic to  $\mathbb T^2$. 
 
 Meanwhile, by topological considerations the condition that $\partial\Omega$ is noncharacteristic for $\mathcal D_1$ is never satisfied if $\partial \Omega$ is homeomorphic to $\mathbb S^2$.\footnote{Many thanks to V.~Colin for this remark. 
 	If $\Omega \subset \mathbb R^3$ and if we have an equiregular distribution ($n_1=p=2$) and $r=2$, then
 	the transversality condition is never satisfied if $\partial \Omega$ is homeomorphic to $S^2$.
 	The reason is that the non-characteristic condition implies the existence of a continuous (with respect to $x$)  unique straight line in $T_x \partial \Omega$ (this is the intersection of $\mathcal D_1(x)$ with $T_x \partial \Omega$) and this is impossible when $\partial \Omega$ is homeomorphic to the sphere by the Poincar\'e--Hopf theorem \cite{Miln} (the Euler characteristic is not $0$).}
 	 
   Finally it was shown in \cite{De} that the measure in the boundary of the characteristic points is zero. Other connected results are obtained in the case of the Heisenberg group by D.~Jerison \cite{Jer}.


\section{The nilpotent approximation}\label{s1}

\subsection{Nilpotentization of vector fields and measures}~\\
Throughout this section we follow the presentation of Rothschild \cite{Rot}, which is based on assumptions and definitions given earlier by Goodman \cite{Go}, Folland--Stein \cite{FS}, Folland \cite{Fo}, M\'etivier \cite{Me}, Rothschild--Stein \cite{RS}. Since this period in the seventies, a huge literature has been devoted to the topic of sub-Riemannian geometry, for which we refer to the appendix in \cite{CHT} and references therein. We attempt to combine the two formalisms in this section as well as in Appendices \ref{AppA2} and \ref{AppC}.

\medskip

We consider the situation presented in the introduction and suppose that Assumptions \ref{CHOr} and \ref{equireg} are satisfied. We recall that $\mathcal D_1(x)$ was defined after Assumption \ref{equireg}, and also that $n$ denotes the dimension of $M$ and $n_1$ the (constant) dimension of $\mathcal D_1(x)$. 

Clearly, we have $p\geq n_1$, but this inequality may be strict (meaning that the vectors $X_1(x),\ldots,X_p(x)$ are not linearly independent at some and then, by equiregularity, any $x\in M$). In order to deal with this situation, we will apply Lemma \ref{locindep}, which says that for every point in $M$ there is a open neighborhood $W\subset M$ and vector fields $\tilde X_1,\ldots \tilde X_{n_1}$ defined in $W$ such that
\begin{equation}
	\label{eq:loclinindep}
	{\rm span}\{ \tilde X_1(x),\ldots \tilde X_{n_1}(x) \} = \mathcal D_1(x)
	\qquad\text{for all}\ x\in W
\end{equation}
and
\begin{equation}
	\label{eq:loclinindeplapl}
	-\Delta_{\bf X}^{M,\mu} f = \sum_{j=1}^{n_1} \tilde X_j^\star \tilde X_j f
	\qquad\text{for all} \ f\in C^2(W) \,.
\end{equation}
The vector fields $\tilde X_1,\ldots,\tilde X_{n_1}$ again satisfy Assumptions \ref{CHOr} and \ref{equireg} (with the same $r$ and $n_j$).

We will first discuss the nilpotent approximation on an open set $W\subset M$ where \eqref{eq:loclinindep} and \eqref{eq:loclinindeplapl} are satisfied. Later we will argue that this gives a nilpotent approximation on all of $M$. Of course, in the special case $p=n_1$ we can immediately take $W=M$, which simplifies the argument.

It is known that under our equiregularity assumption, for any $x\in W$ there is an open neighborhood $U\subset W$ of $x$ and vector fields $Y_1,\ldots,Y_n$ defined in $U$ such that for any $x'\in U$ we have
$$
\text{Span}(Y_1(x'),\ldots,Y_{n_j}(x'))=\mathcal{D}_j(x')
\qquad\text{for all}\ j=1,\ldots, r
$$
and
\begin{equation}\label{eq:assadd}
Y_i(x') =\tilde X_i(x') \qquad\text{for all}\ i=1,\ldots,n_1 \,.
\end{equation}
A family of vector fields satisfying the first assumption is called said to be adapted to the flag at $x'$.

Given an adapted flag $(Y_1,\ldots,Y_n)$ at $x\in W$ satisfying \eqref{eq:assadd} with $x'=x$, we can define canonical privileged coordinates of the first kind\footnote{Other choices are possible but we only need that some privileged coordinates exist and will only consider this one, which is actually the one introduced by G. M\'etivier \cite{Me} in the proof of his Theorem~3.1.} at $x$ by the mapping $\theta_x$ given by
 \begin{equation} \label{eq:pc}
 \theta_x (y) := u = (u_{i}) \qquad\mbox{ if } y = \exp (\sum_{i=1}^n u_{i}Y_i) \cdot x\,,
 \end{equation} where $\exp$ denotes the exponential map defined in some small neighborhood of $x$.
 Thus we identify a neighborhood of $x\in M$ via $\theta_x$ with a neighborhood of $0$ in $\mathbb R^n$. It has been shown by G.\ M\'etivier (see below)  that everything depends smoothly on $x$. In particular, $\theta_x$ is also $C^\infty$ with respect to $x$.

We denote by $Y_{i,x}$ the image of $Y_i$ by $\theta_x$, 
which is simply $Y_i$ written in the local canonical coordinates around $x$. Thus $Y_{i,x}$ is a vector field defined in an open neighborhood of $0$ in $\R^n$.

On $\mathbb R^n$, with coordinates $u=(u_{i})$, we introduce the family of dilations given by
\begin{equation}
	\label{eq:dilation}
	\delta_t (u_{i}) = (t^{w_i} u_{i})\,,
\end{equation}
where positive integers $w_1,\ldots,w_n$ are defined as follows: for any $i\in\{1,\ldots,n\}$ there is a unique $j\in\{1,\ldots,n\}$ such that $n_{j-1}+1 \leq i\leq  n_j$, and we set $w_i=j$. We note that the homogeneous dimension $Q$, defined in \eqref{eq:defQ}, satisfies
\begin{equation}\label{eq:defQa}
Q =\sum_{i=1}^n w_i\,.
\end{equation}
Via the family of dilations we have a natural definition of homogeneous functions of degree $s$ on $\mathbb R^n\setminus\{0\}$, a definition of ``homogeneous norm" (of degree one) and corresponding notions of vanishing function to order $p$.
A differential operator of the form
$f(u) \frac{\partial}{\partial u_{i}}$ is of  order $w_i -s$ if $f$ is homogeneous of degree $s$. When $f$ is defined in a pointed neighborhood of $0$ and can be expanded into a sum of homogeneous terms of increasing  order, we will say that $f$ is of order $\leq s$ if the term of lowest order is homogeneous of degree $s$. 

G. M\'etivier \cite[Theorem 3.1]{Me} proves the following theorem (in addition to the regularity of $\theta_x$ already mentioned above).

\begin{theorem}\label{lemmaMet}
	For any $x$,  $\tilde X_{j,x}$ is of order $\leq 1$. Furthermore, 
	\begin{itemize}
		\item We have
		$$
		\tilde X_{j,x} = \widehat X_{j,x} + R_{j,x}\,,
		$$
		where $\widehat X_{j,x}$ is homogeneous of order $1$ and $R_{j,x}$ is of order $\leq 0$.
		\item 
		The  $\widehat X_{j,x}$, $j=1,\ldots,n_1$, generate a nilpotent Lie algebra $\mathcal G_x$ of dimension $n$ and rank $r$. 
		\item 
		The mapping $x \mapsto \widehat X_{j,x}$ is smooth.
	\end{itemize}
\end{theorem}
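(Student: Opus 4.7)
The plan is to pull everything back through $\theta_x$ to a neighborhood of $0$ in $\R^n$ and exploit the dilations $\delta_t$. In these coordinates any smooth vector field reads $\sum_i a_i(u)\,\partial_{u_i}$, and the monomial $u^\alpha\partial_{u_i}$ is $\delta_t$-homogeneous of order $w_i-\sum_\ell w_\ell\alpha_\ell$. The three assertions of the theorem then all reduce to statements about the weighted Taylor expansions at $u=0$ of the coefficients $a_{j,i}(u)$ of $X_{j,x}$.

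\textbf{Step 1: order $\leq 1$ and the principal part.} The technical heart is the following vanishing property of the privileged coordinates: for each $i$, the function $u_i\circ\theta_x$ has weighted order $\geq w_i$ at $x$, i.e.\ $X_{k_1}\cdots X_{k_s}(u_i\circ\theta_x)(x)=0$ for all $s<w_i$ and $k_1,\dots,k_s\in\{1,\dots,p\}$. I would prove this by unwinding $\theta_x^{-1}(u)=\exp(\sum_\ell u_\ell Y_\ell)\cdot x$ through the Baker--Campbell--Hausdorff formula, using that $(Y_1,\dots,Y_n)$ is adapted to the flag $\mathcal D_\bullet$ together with $p=n_1$ and $Y_\ell=X_\ell$ for $\ell\leq p$. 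Once this is in hand, the Taylor coefficient of $a_{j,i}$ at a multi-index $\alpha$ equals, up to combinatorial factors, a value $X_{k_1}\cdots X_{k_{|\alpha|}}X_j(u_i\circ\theta_x)(x)$, which vanishes unless $|\alpha|+1\geq w_i$. Hence $a_{j,i}$ contains no monomial of weighted degree $<w_i-1$, which is exactly the statement that $X_{j,x}$ is of $\delta_t$-order $\leq 1$. Defining $\widehat X_{j,x}$ to be the $\delta_t$-homogeneous part of order exactly $1$ and $R_{j,x}:=X_{j,x}-\widehat X_{j,x}$ yields the required decomposition, with $R_{j,x}$ of order $\leq 0$ by construction.

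\textbf{Step 2: Lie algebra, rank $r$, dimension $n$.} Iterated Lie brackets add $\delta_t$-orders, so a length-$k$ iterated bracket of the $\widehat X_{j,x}$ is homogeneous of order $k$. A homogeneous vector field of order $k$ has its $\partial_{u_i}$-coefficient of weighted degree $w_i-k$, which is forced to be zero whenever $k>w_i$; since $\max_i w_i=r$, length-$(r+1)$ brackets vanish and the step of $\mathcal G_x$ is at most $r$. For the dimension, let $\widehat{\mathcal D}_j$ be spanned by brackets of length $\leq j$ of the $\widehat X_{k,x}$. Because each $R_{k,x}$ is of order $\leq 0$, its contribution to a length-$j$ bracket is of order $\leq j-1$ and lies at $0$ in $\widehat{\mathcal D}_{j-1}(0)$. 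An induction on $j$ then gives $\widehat{\mathcal D}_j(0)=(d\theta_x)\mathcal D_j(x)$, and by equiregularity the right-hand side has dimension $n_j$; in particular $\widehat{\mathcal D}_r(0)=T_0\R^n$, so $\dim \mathcal G_x=n$. The strict inequality $n_{r-1}<n_r=n$ also rules out nilpotency of step strictly less than $r$.

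\textbf{Step 3 and the main obstacle.} Smoothness in $x$ is a direct consequence of the fact that an adapted flag can be chosen in a whole neighborhood of $x$ and that the exponential is jointly smooth, so the coefficients $a_{j,i}(x,u)$ depend smoothly on $x$; extracting the $\delta_t$-homogeneous part of a fixed degree is a linear projection on Taylor jets at $u=0$, hence smooth. The hard part is Step 1: establishing that \eqref{eq:pc} really produces coordinates of weighted order exactly $w_i$ is a delicate Baker--Campbell--Hausdorff computation in which the adapted-flag hypothesis and the equiregularity assumption enter essentially, and it is the only place where the structural hypotheses on $(X_1,\dots,X_p)$ are fully used. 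Once this vanishing property is established, Steps 2 and 3 are essentially bookkeeping with the dilation grading.
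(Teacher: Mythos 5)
The paper does not prove this theorem: it recalls it verbatim from M\'etivier \cite[Theorem~3.1]{Me} (see also \cite{Rot,HN}), so there is no internal proof to compare your route against.

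Your outline follows the standard template (dilation grading in privileged coordinates, homogeneous decomposition, bracket calculus, smoothness of the adapted flag), and Steps~2 and~3 are essentially correct as sketched. Step~1, however, has a genuine gap in the passage from the $X$-derivative vanishing to the weighted Taylor vanishing. You assert that $\partial^\alpha a_{j,i}(0)$ ``equals, up to combinatorial factors,'' a single iterated derivative $X_{k_1}\cdots X_{k_{|\alpha|}}X_j(u_i\circ\theta_x)(x)$. This is not so: since $X_{j,x}=\sum_i a_{j,i}\,\partial_{u_i}$ is not a coordinate vector field, an iterated $X$-derivative at the origin is a linear combination of many Euclidean partials $\partial^\beta$ with coefficients built from jets of the $a$'s, and the index sets (strings $(k_1,\dots,k_s)\in\{1,\dots,p\}^s$ versus multi-indices $\alpha\in\N_0^n$) do not even match up. What is actually required here is a separate lemma: in privileged coordinates the non-holonomic order of a smooth $f$ (the smallest $s$ such that some $X_{k_1}\cdots X_{k_s}f(x)\neq0$) coincides with the weighted vanishing order of its Taylor series at $0$; this equivalence (see, e.g., \cite{HN,Je}) applied to $a_{j,i}=X_j(u_i\circ\theta_x)$ is what gives that $a_{j,i}$ has no monomial of weighted degree $<w_i-1$. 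You correctly identify Step~1 as the hard part, but the delicate point is not only the BCH expansion establishing the $X$-derivative vanishing; it is equally this order-equivalence lemma, which you do not state and which the ``combinatorial factors'' shortcut quietly replaces.
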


By  the nilpotent approximation, we can associate with  each point $x\in W$ a nilpotent group $G_x$ (identified with the algebra $\mathcal G_x$ in the $u$-coordinates) and a corresponding sub-Laplacian 
$$
\widehat \Delta_x= \sum_{j=1}^{n_1} \widehat X_{j,x}^{\,2}
$$
in $\mathcal U_2(\mathcal G_x)$ (the elements in the enveloping algebra  $\mathcal U(\mathcal G_x)$ that are homogeneous of degree $2$). The H\"ormander condition (Assumption \ref{CHOr}) for the vector fields $\tilde X_1,\ldots,\tilde X_{n_1}$ implies that for every $x\in W$ the vector fields $\widehat X_{1,x},\ldots,\widehat X_{n_1,x}$ satisfy the corresponding H\"ormander condition on $\R^n$.

\medskip

At this point it is important to notice that for any $x\in W$ the operator $-\widehat\Delta_x$ depends only on the vector fields $X_1,\ldots,X_p$ (and on the measure $\mu$) and \emph{not} on the choice of the auxiliary vector fields $\tilde X_1,\ldots,\tilde X_{n_1}$ satisfying \eqref{eq:loclinindep} and \eqref{eq:loclinindeplapl}. This claim is justified in \cite[Subsection A.5.4]{CHT1}. The choice of other auxiliary vector fields corresponds to a different choice of privileged coordinates. In passing we note that this argument also shows that instead of the above canonical privileged coordinates of the first kind one can also choose other so-called privileged coordinates leading to the same operator and measure; see also the example in Section \ref{AppA}.

\medskip

Next, we introduce the notion of nilpotentized measure. That is, given the measure $\mu$ on $M$, for any point $x\in W$ we define a measure $\widehat \mu_x$ on $\R^n$. We refer to  \cite[Appendix A.5.6]{CHT1} or to our Appendix \ref{AppC} for a definition in the formalism of sub-Riemannian geometry  and explain here ``by hand" how it can be constructed for our specific choice of privileged coordinates.
On $\mathbb R^n$ we have the Lebesgue measure 
$$
du= \prod_{i=1}^n du_{i}\,,
$$
and in these local coordinates  the measure $\mu$ is of the form
$$
d\mu = a(x,u) \,du\,,
$$
where $(x,u)\mapsto a(x,u)$ is $C^\infty$ in both variables $x$ and $u$.
In a small neighborhood of $0$,  the nilpotentized measure at $x$  can be defined by
\begin{equation}\label{eq:nilpmeas}
	d\widehat \mu_x:= a(x,0) \,du\,.
\end{equation}
Note that for $u$ small and locally in $x$ we have a good control on $a(x,u)/a(x,0)$ and its inverse.

It is important to note that for every $x\in M$, the nilpotentized measure $\widehat\mu_x$ is invariant with respect to the group operation on $G_x$. As a consequence, the formal adjoint $(\widehat X_{j,x})^\star$ of $\widehat X_{j,x}$ with respect to the scalar product in $L^2(G_x,\widehat\mu_x)$ is equal to $-\widehat X_{j,x}$; see, e.g., \cite[Remark A.5]{CHT1}. Therefore the sub-Laplacian
$$
-\widehat \Delta_x = \sum_{j=1}^{n_1} (\widehat X_{j,x})^\star \widehat X_{j,x}
$$ 
arises from the construction in the introduction when we replace $M$ by $G_x=\R^n$, $\mu$ by $\widehat\mu_x$ and $X_1,\ldots,X_p$ by $\widehat X_{1,x},\ldots,\widehat X_{n_1,x}$. In particular, $-\widehat \Delta_x$ is selfadjoint in $L^2(G_x,\widehat\mu_x)$.

\medskip

Again, the nilpotentized measure depends only on $\mu$ and the vector fields $X_1,\ldots, X_p$ and \emph{not} on the choice of the auxiliary vector fields $\tilde X_1,\ldots,\tilde X_{n_1}$ satisfying \eqref{eq:loclinindep} and \eqref{eq:loclinindeplapl}. This claim is implicit in \cite[Subsection A.5.6]{CHT1}, where the nilpotentized measure is defined through arbitrary privileged coordinates. The choice of other auxiliary vector fields corresponds to a different choice of privileged coordinates.

\medskip

This concludes our presentation of the nilpotent approximation on $W$. In order to obtain a nilpotent approximation on all of $M$ we apply Lemma \ref{locindep} to cover $M$ by open sets $W$ on which \eqref{eq:loclinindep} and \eqref{eq:loclinindeplapl} are satisfied. The fact that $-\widehat\Delta_x$ and $\widehat\mu_x$ depend only on $\mu$ and $X_1,\ldots,X_p$ implies that when $x$ belongs to two different sets $W$, then the corresponding nilpotentized sub-Laplacians and nilpotentized measures coincide. Therefore the nilpotent approximation is well defined on $M$.

\medskip

\subsection{The local Weyl constant}~\\
We now turn our attention to the asymptotic distribution of the eigenvalues of the operator $-\Delta_{\Xb}^{M,\mu}$, which is given by M\'etivier's Weyl formula \cite{Me} that we have already mentioned in \eqref{MetForm1}. We will describe the constant that appears in this asymptotic formula.

For each fixed $x\in M$, we consider the selfadjoint operator $-\widehat \Delta_x$ in $L^2(G_x,\widehat\mu_x)$. Its spectral projections $\1(-\widehat\Delta_x<\lambda)$, $\lambda>0$, are integral operators in $G_x$, that is,
$$
(\1(-\widehat\Delta_x<\lambda) f)(u) = \int_{G_x} \1(-\widehat\Delta_x<\lambda)(u,v) f(v)\,d\widehat\mu_x(v)
$$
for all $u\in G_x$ and $f\in L^2(G_x,\widehat\mu_x)$ with a certain integral kernel $\1(-\widehat\Delta_x<\lambda)(u,v)$, the \emph{spectral function}. Since the operator $-\widehat\Delta_x$ is invariant under the group operation in $G_x$, the integral kernel satisfies
$$
\1(-\widehat\Delta_x<\lambda)(u,u) =  \1(-\widehat\Delta_x<\lambda)(0,0)
\qquad\text{for all}\ u\in G_x \,.
$$
Moreover, since $-\widehat\Delta_x$ is homogeneous of degree $-2$ under dilations in $G_x$, we deduce that
\begin{equation}
	\label{eq:specfcndiag1}
	\1(-\widehat\Delta_x<\lambda)(0,0) = c_x^{\rm Weyl} \ \lambda^\frac Q2
	\qquad\text{for all}\ \lambda>0
\end{equation}
with
\begin{equation}
	\label{eq:specfcndiag2}
	c_x^{\rm Weyl} := \1(-\widehat\Delta_x<1)(0,0) \,.
\end{equation}
Explicit formulas for $c_x^{\rm Weyl}$ can be obtained in certain special cases, for instance, in the case where $G_x$ is a Heisenberg group; see Section \ref{sec:weylhnrk}. In general it is know that $c^{\rm Weyl}_x$ is positive for every $x\in M$ and that $x\mapsto c^{\rm Weyl}_x$ is continuous

Using this definition we can state a more precise version of \eqref{MetForm1}.

\begin{theorem}\label{weylmet}
	The spectral counting function of the selfadjoint realization of $-\Delta^{M,\mu}_{\bf X}$ in $L^2(M,\mu)$ satisfies, as $\lambda \rightarrow +\infty$,
\begin{equation}\label{eq:met1}
N(\lambda,-\Delta_{\Xb}^{M,\mu}):= \#  \{j:\ \lambda_j (-\Delta_{\Xb}^{M,\mu}) \leq \lambda\} \sim \Big(\int_M c_x^{\rm Weyl} \, d\mu(x) \Big)\, \lambda^{\frac Q2}\,.
\end{equation}
\end{theorem}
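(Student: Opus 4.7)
The plan is to follow M\'etivier's nilpotentization strategy: combine a scaling argument on the nilpotent model at each point with a Dirichlet--Neumann bracketing on a fine partition of $M$.

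First I would construct the pointwise Weyl density. By Theorem \ref{lemmaMet}, at each $x\in M$ the homogeneous parts $\hat X_{k,x}$ generate a nilpotent Lie algebra $\mathcal G_x$; with respect to $d\hat\mu_x=a(x,0)\,du$ and the dilations $\delta_t$, the operator $\hat L_x:=-\sum_{k=1}^p \hat X_{k,x}^\star \hat X_{k,x}$ is homogeneous of degree $2$ while $d\hat\mu_x$ scales by $t^Q$. Hypoellipticity of $\hat L_x$ on the associated nilpotent group $G_x$ yields a smooth, positive heat kernel; combined with a Tauberian argument this produces a positive constant $\kappa(x)$ such that
$$N(\lambda, \hat L_x^U)\sim \kappa(x)\,\hat\mu_x(U)\,\lambda^{Q/2}\qquad(\lambda\to\infty)$$
for any bounded open $U\subset\R^n$ with sufficiently regular boundary (Dirichlet realization). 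The smooth dependence $x\mapsto \hat X_{k,x}$ from Theorem \ref{lemmaMet} then ensures continuity of $\kappa$.

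Next I would localize. Cover $M$ by charts in which privileged coordinates exist, and for a small parameter $\eta>0$ partition $M$ into ``anisotropic boxes'' $B_\alpha$ of privileged-coordinate size $\eta$, centered at points $x_\alpha$. Using the maximal hypoellipticity recalled in the introduction together with the standard variational argument, the bracketing
$$\sum_\alpha N(\lambda, -\Delta_\mathbf{X}^{B_\alpha,D})\le N(\lambda, -\Delta)\le \sum_\alpha N(\lambda, -\Delta_\mathbf{X}^{B_\alpha,N})$$
holds, where $D$ refers to the form domain $S^1_0(B_\alpha)$ and $N$ to the larger domain $S^1(B_\alpha)$. On each box I would write $X_{k,x_\alpha}=\hat X_{k,x_\alpha}+R_{k,x_\alpha}$ and perform the anisotropic rescaling $u=\delta_{\lambda^{-1/2}}v$: the box becomes large in the rescaled variables while $R_{k,x_\alpha}$, being of homogeneous order $\le 0$, contributes a perturbation negligible as $\lambda\to\infty$. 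Using $a(x,u)=a(x_\alpha,0)+O(\eta)$ on $B_\alpha$, the first step yields
$$N(\lambda, -\Delta_\mathbf{X}^{B_\alpha,\cdot})=\bigl(1+o_\lambda(1)+O(\eta)\bigr)\,\kappa(x_\alpha)\,\mu(B_\alpha)\,\lambda^{Q/2}$$
for both realizations. Summing over $\alpha$ produces a Riemann sum for $\int_M\kappa\,d\mu$; letting first $\lambda\to\infty$ and then $\eta\to 0$ concludes the argument.

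The main obstacle is the closing of the Dirichlet and Neumann brackets. In the hypoelliptic setting the quadratic form only controls the directions $X_j$, so one must use the maximal hypoellipticity to show that enlarging the form domain from $S^1_0(B_\alpha)$ to $S^1(B_\alpha)$ inflates the counting function only by a factor $1+o(1)$ at the Weyl scale $\lambda^{Q/2}$. A second technical point is the uniform control of the remainders $R_{k,x_\alpha}$ across all boxes after the anisotropic rescaling, which rests on the smoothness of the adapted frame in $x$ guaranteed by Theorem \ref{lemmaMet}; only this uniformity allows one to exchange the limit $\lambda\to\infty$ with the sum over boxes.
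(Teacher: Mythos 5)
The paper itself does not prove Theorem~\ref{weylmet}: it cites M\'etivier and \cite{CHT1}, and Appendix~\ref{AppC} sketches the derivation from small-time heat-kernel asymptotics -- one shows $t^{Q/2}\,e_{\Delta,\mu}(t,x,x)\to\widehat e^x(1,0,0)$ uniformly on $M$ and then applies a Tauberian theorem, giving $\kappa(x)=\Gamma(\tfrac Q2+1)^{-1}\widehat e^x(1,0,0)$. Your proposal follows a genuinely different route, Dirichlet--Neumann bracketing, and it contains a gap that you flag as ``the main obstacle'' but do not resolve, and that I do not think can be resolved by the tools you invoke.

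The gap is the Neumann upper bound. On a small coordinate box $B_\alpha$ the form $u\mapsto\sum_j\|X_ju\|^2_{L^2(B_\alpha)}$ on $S^1(B_\alpha)$ controls derivatives only along the horizontal distribution $\mathcal D_1$, and the embedding $S^1(B_\alpha)\hookrightarrow L^2(B_\alpha)$ need not be compact: the compactness theorems for horizontal Sobolev spaces on bounded domains (Garofalo--Nhieu and successors) require structural conditions on $\partial B_\alpha$ relative to $\mathcal D_1$, such as Poincar\'e--Sobolev or $(\epsilon,\delta)$-domain conditions, which an anisotropic coordinate box will typically violate along its characteristic faces. Without compactness the Neumann realization need not have discrete spectrum and $N(\lambda,-\Delta_{\Xb}^{B_\alpha,N})$ may be infinite, so the bracket $N(\lambda,-\Delta)\le\sum_\alpha N(\lambda,-\Delta_{\Xb}^{B_\alpha,N})$ is vacuous. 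Even granting compactness, the Dirichlet--Neumann discrepancy is not automatically of lower Weyl order: it is concentrated in a boundary layer whose width is governed by the anisotropic dilations $\delta_t$, and near a characteristic face the slow directions dilate so weakly that this layer can contribute at the full order $\lambda^{Q/2}$. Maximal hypoellipticity is an interior a priori estimate; it gives no control near $\partial B_\alpha$ and cannot by itself show that the brackets close at leading order. The heat-kernel approach of Appendix~\ref{AppC} is not a stylistic alternative but is used precisely because the on-diagonal heat kernel is an interior quantity whose small-time rescaling limit is computed without ever introducing artificial boundaries, so none of these boundary-layer difficulties arise.
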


As we already mentioned, this result is due to M\'etivier \cite{Me}. In the special case where $r=2$, there were important contributions on the subject starting from the end of the seventies  \cite{MS1,MS2,Mo}. For recent developments related to Theorem \ref{weylmet} we refer to \cite{CHT0,CHT,CHT1}; see also our Appendix \ref{AppC} for its relation to \cite{CHT1}.

\begin{remark}\label{rem:weylsubsets}
	Theorem \ref{weylmet} remains valid when we consider the Dirichlet realization of the operator $-\Delta_{\Xb}^{M,\mu}$ in an open set $\Omega\subset M$. In this case the integral on the right side of \eqref{eq:met1} is restricted to $\Omega$.
\end{remark}

\begin{remark}
	The constant $c_x^{\rm Weyl}$ itself appears through a Weyl-type formula. Indeed, let $x\in M$ and let $\Omega\subset G_x$ be open with $\widehat\mu_x(\Omega)<\infty$. Then, if $-\widehat\Delta_x|_\Omega$ is the Dirichlet realization of $-\widehat \Delta_x$ on $\Omega$, one has
	$$
	N(\lambda,-\widehat\Delta_x|_\Omega) \sim c_x^{\rm Weyl} \ \widehat\mu_x(\Omega) \ \lambda^{\frac Q2}
	\qquad\text{as}\ \lambda\to\infty \,.
	$$
	This follows from the previous remark since the nilpotentization of the sub-Laplacian on a nilpotent group is the sub-Laplacian itself.
\end{remark}

\begin{remark}\label{rem:weylindepmeas}
	The integral $\int_M c_x^{\rm Weyl} \, d\mu(x)$ depends on $M$ and on the vector fields $X_1,\ldots, X_p$, but is independent of the measure $\mu$. This can probably be extracted from \cite{Me} and is made explicit in \cite[Subsection 2.1]{CHT1}.
\end{remark}

\medskip

\subsection{The local Faber--Krahn constant}~\\
For any $x\in M$ we have a Faber--Krahn inequality, that is, for every $x\in M$ there is a constant $c>0$ such that
\begin{equation} \label{eq:FKx1}
	\langle - \widehat \Delta_x v,v\rangle_{L^2(G_x,\widehat\mu_x)} \geq c \ \widehat\mu_x(\Omega)^{-\frac 2Q} \| v\|_{L^2(G_x,\widehat\mu_x)}^2\,,
	\qquad \forall \Omega\subset G_x \ \text{open}\,,\,\forall v\in C_c^\infty (\Omega)\,.
\end{equation}
We recall that $Q$ denotes the homogeneous dimension of $G_x$; see \eqref{eq:defQ}. By our assumption of equiregularity, $Q$ is independent of $x\in M$.

By definition, $c_x^{\rm FK}$ is the largest constant such that \eqref{eq:FKx1} holds. We will prove momentarily the positivity of this constant, even uniformly in $x$.

\begin{remark}\label{rem:fkindepmeas}
	Having in mind the proof of Pleijel's theorem, it is important to write the above estimates using the appropriately normalized Lebesgue measure $\widehat \mu_x$ on $G_x$. Note that the Faber--Krahn constant $c_x^{\rm FK}$ depends both on $G_x$ and on a specific normalization constant determined by the measure $\mu$; see \cite[Appendix A.5.6]{CHT1} and our Appendix \ref{AppC}. When $\tilde\mu$ is a second measure on $M$ satisfying the same properties as $\mu$, then $d\tilde\mu = h\, d\mu$ for a smooth, positive function $h$ on $M$. Then, for any $x\in M$,
	$$
	d\widehat{\tilde\mu}_x = h(x) \, d\widehat\mu_x \,,
	$$
	which shows that
	$$
	\tilde c^{\rm FK}_x = h(x)^\frac 2Q \, c^{\rm FK}_x \,.
	$$
	As an important consequence, we see that the integral
	$$
	\int_M \left( c^{\rm FK}_x \right)^{-\frac Q2}d\mu(x) 
	$$
	depends on $M$ and $X_1,\ldots, X_p$, but is independent of the measure $\mu$.\footnote{We are grateful to Y.~Colin de Verdi\`ere for pointing this out to us.}
\end{remark}

Further information about the Faber--Krahn constant is contained in the following lemma, whose proof we defer to the Subsection \ref{sec:fk}. 

\begin{lemma}\label{faberkrahn}
	We have
	\begin{equation}\label{eq:fk1}
		\inf_{x\in M}  c_x^{\rm FK} >0\,.
	\end{equation}
	Moreover, the function $M\ni x\mapsto c_x^{\rm FK}$ is uniformly H\"older continuous.
\end{lemma}


\section{Main result for sub-Laplacians in the equiregular case}\label{sec:mainsubriem}

We continue to work in the setting of the previous section. In particular, we suppose that Assumptions \ref{CHOr} and \ref{equireg} are satisfied. Our main statement concerning nilpotent approximation is  the following theorem:

\begin{theorem}\label{t:manifold}
Let $-\Delta_\Xb^{M,\mu} =\sum_{j=1}^p X_j^\star X_j$ be an equiregular sub-Riemannian  Laplacian on a closed connected manifold $M$ with  given measure $\mu$. Then
\begin{equation}\label{eq:mainmfd}
\limsup_{k\rightarrow+\infty} \frac{\nu_k}{k} \leq \Big( \int_M \big( c^{\rm FK}_x \big)^{-\frac Q2} \,d\mu(x) \Big) \Big( \int_M c_x^{\rm Weyl}  \,d\mu(x) \Big)^{-1},
\end{equation}
where $\nu_k$ denotes the maximal number of nodal domains of an eigenfunction of $-\Delta_\Xb^{M,\mu}$ associated with eigenvalue $\lambda_k$. 
\end{theorem}

\begin{corollary}\label{maincor}
If 
\begin{equation}\label{eq:hyp}
\Big( \int_M \big( c^{\rm FK}_x \big)^{-\frac Q2} \,d\mu(x) \Big) \Big( \int_M c_x^{\rm Weyl}  \,d\mu(x) \Big)^{-1} < 1\,,
\end{equation}
then Pleijel's theorem holds. In particular, if
$$
\big( c^{\rm FK}_x \big)^{-\frac Q2} \, \big( c_x^{\rm Weyl} \big)^{-1} < 1
\qquad\text{for all}\ x\in M \,,
$$
then \eqref{eq:hyp} and therefore Pleijel's theorem holds.
\end{corollary}

The first part of the corollary follows immediately from Theorem \ref{t:manifold}. For the second part, we note that both $x\mapsto c^{\rm FK}_x$ and $x\mapsto c_x^{\rm Weyl}$ are continuous, so under the assumption of the second part of the corollary there is a constant $\gamma<1$ such that $\big( c^{\rm FK}_x \big)^{-\frac Q2} \, \big( c_x^{\rm Weyl} \big)^{-1}\leq\gamma$ for all $x\in M$. This implies that the left side of \eqref{eq:hyp} is at most $\gamma$.

\medskip

Theorem \ref{t:manifold} and Corollary \ref{maincor} are improvements due to Y.~Colin de Verdi\`ere of bounds that appeared in a preprint version of this paper \cite{FrHe1}; see also the announcement \cite{FrHe2}. There we had $(\inf_{x\in M} c_x^{\rm FK})^{-\frac Q2} \mu(M)$ instead of $\int_M \big( c^{\rm FK}_x \big)^{-\frac Q2} \,d\mu(x)$. The usefulness of this improvement can be seen, for instance, in the examples in Section \ref{AppA}. We are very grateful to Y.~Colin de Verdi\`ere for allowing us to incorporate his ideas into our paper.

\begin{remark} \label{remdom}
Theorem \ref{t:manifold} remains valid when we consider the Dirichlet realization $-\Delta^{\Omega,\mu}_\Xb$ of an equiregular sub-Riemannian Laplacian in a relatively compact open set $\Omega$ in a manifold $M$. The proof relies on Remark \ref{rem:weylsubsets}. It is enough to have Assumptions \ref{CHOr} and \ref{equireg} satisfied in a neighborhood of $\overline{\Omega}$. \\ 
Examples are discussed in Section \ref{AppA}. \\
In particular, we can consider an open, relatively compact set $\Omega$ in a fixed stratified group $G$, where $\Xb$ is a basis of $\mathcal G_1$ 
 and $\mu$ is (in the exponential coordinates) the Lebesgue measure. In this case, the function $x\mapsto c^{\rm Weyl}_x$ is constant. Situations of this type are further discussed in Part \ref{part2}.
\end{remark}

\begin{remark}\label{remcourantasymp}
The weak version of Courant's theorem (that is, the first part of Theorem~\ref{courant}) yields
\begin{equation}\label{eq:weakcourant}
\limsup_{k\rightarrow+\infty} \frac{\nu_k}k \leq 1\,.
\end{equation}
Here we use the fact that Weyl's formula implies $\lim_{k\to\infty} \frac{{\rm mult}(\lambda_k)}{k}=0$.
\end{remark}

In view of the previous remark, Theorem~\ref{t:manifold} is only interesting for its application in the corollary, that is, when \eqref{eq:hyp} holds. In Part \ref{part2} we will investigate the validity of this condition in the case of an open subset of $\mathbb H_n\times\R^k$. Other instances where one might be able to prove the validity of \eqref{eq:hyp} are, for example, in the form $M_3\times \mathbb T^k$ with $k$ large enough and with $M_3$ a compact $3$-dimensional contact manifold.

\begin{remark}
	In the Riemannian case (that is, when $p=n$ and when $\mu$ is the Riemannian volume measure) the assertion of Theorem \ref{t:manifold} reduces to the theorem of B\'erard and Meyer \cite{BerMe}. Indeed, in this case we have $Q=n$ and for every $x\in M$ the operator $-\widehat\Delta_x$ is the ordinary Laplacian on $G_x=\R^n$. Consequently, in the notation of the introduction and of Part \ref{part2},
	$$
	c^{\rm FK}_x = C^{\rm FK}(\R^n)
	\qquad\text{and}\qquad
	c_x^{\rm Weyl} = \mathcal W(\R^n) \,.
	$$
	Both quantities are independent of $x$ and we arrived at the same bound with constant
	$$
	\gamma(\R^n) = (C^{\rm FK}(\R^n))^{-\frac Q2} \, (\mathcal W(\R^n))^{-1}
	$$
	as in the case of domains in Euclidean space; see \eqref{eq:pleijelintro}.
\end{remark}

\begin{remark}\label{rem:indepmeas}
	The upper bound in \eqref{eq:mainmfd} depends on the manifold $M$ and on the vector fields $X_1,\ldots, X_p$, but it does \emph{not} depend on the measure $\mu$. This follows from Remarks \ref{rem:weylindepmeas} and \ref{rem:fkindepmeas}. Indeed, according to these remarks both integrals on the right side of \eqref{eq:mainmfd} are independent of $\mu$.
\end{remark}


\section{Proof of the main sub-Riemannian results}
\label{sec:mainsubriemproof}

\subsection{Comparing Laplacians}\label{sec:comparinglaplacians}~\\
Throughout this section we choose a Riemannian structure on $M$ that is compatible with its smooth structure. (This is always possible, first in local coordinates and then globally via a partition of unity.) This Riemannian structure allows us to consider (open) geodesic balls $B(x,r)$ at $x\in M$ of radius $r>0$.

Recall that our nilpotent approximation in Section \ref{s1} was carried out on open subsets $W\subset M$ where \eqref{eq:loclinindep} and \eqref{eq:loclinindeplapl} are satisfied. Also in this subsection we will work locally. More precisely, for an open set $W\subset M$ as before we choose a compact subset $K \subset W$. By compactness, there is an $\epsilon_*>0$ such that for any $x\in K$ the privileged coordinates at $x$ are well defined in $B(x,\epsilon_*)$.

In the formulation of the following lemma we identify functions $v$ on $M$ with support in $B(x,\epsilon_*)$ with functions $v\circ\theta_x^{-1}$ on $G_{x}$ with support in $\theta_x(B(x,\epsilon_*))$.

\begin{lemma}\label{l:remainder2}
	Fix $W$ and $K$ as above. Then there are constants $C$, $\epsilon_0>0$ and $s>0$ such that for any $x\in K$, any $0<\epsilon\leq\epsilon_0$ and any function $v\in C^\infty_0(B(x,\epsilon))$ one has
	\begin{align*}
		\left| \langle \widehat{\Delta}_{x} v, v \rangle_{L^2(G_{x,\widehat\mu_{x}})} - \langle \Delta_\Xb^{M,\mu} v, v \rangle_{L^2(M,\mu)} \right| \leq C \epsilon^s \langle - \widehat{\Delta}_{x} v, v \rangle_{L^2(G_{x},\widehat\mu_{x})} \,.
	\end{align*}
\end{lemma}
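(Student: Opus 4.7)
The plan is to pull everything back to privileged coordinates at $x$ via $\theta_x$, in which $d\mu=a(x,u)\,du$ and $d\widehat\mu_x=a(x,0)\,du$ with $a$ smooth and positive (uniformly in $x\in M$), while Theorem~\ref{lemmaMet} gives $X_{k,x}=\widehat X_{k,x}+R_{k,x}$ with $R_{k,x}$ a smooth vector field of homogeneous order $\leq 0$. The two quadratic forms then read $\sum_k\int|X_{k,x}v|^2\,a(x,u)\,du$ and $\sum_k\int|\widehat X_{k,x}v|^2\,a(x,0)\,du$, and their difference splits as
\begin{equation*}
\sum_k \int\!\bigl(2\,\mathrm{Re}(\widehat X_{k,x}v\,\overline{R_{k,x}v})+|R_{k,x}v|^2\bigr)a(x,u)\,du + \sum_k\int|\widehat X_{k,x}v|^2(a(x,u)-a(x,0))\,du.
\end{equation*}
The measure-discrepancy term is immediately bounded by $C\epsilon\,\langle-\widehat\Delta_x v,v\rangle$, since $|a(x,u)-a(x,0)|\leq C|u|\,a(x,0)\leq C\epsilon\,a(x,0)$ on $\mathrm{supp}\,v\subset\{|u|\leq C\epsilon\}$, with constants uniform in $x$ by compactness of $M$.

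To control the terms involving $R_{k,x}$ I would rescale by the anisotropic dilation $\delta_{\tilde\epsilon}$ with $\tilde\epsilon:=\epsilon^{1/r}$, setting $\tilde v(u):=v(\delta_{\tilde\epsilon}u)$. Since $|u_i|\leq C\epsilon=C\tilde\epsilon^{\,r}\leq C\tilde\epsilon^{\,w_i}$ for every $i$, $\tilde v$ is supported in a fixed bounded set $K\subset\R^n$ independent of $\epsilon$ and $x$. Decompose $R_{k,x}=\sum_{j\geq 0} R^{(-j)}_{k,x}$ into homogeneous components of operator order $-j\leq 0$ (plus a smooth remainder of arbitrarily high order, harmless on small supports). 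Homogeneity combined with the Jacobian $\tilde\epsilon^{\,Q}$ of $\delta_{\tilde\epsilon}$ yields
\begin{equation*}
\|\widehat X_{k,x}v\|_{L^2(du)}^2=\tilde\epsilon^{\,Q-2}\|\widehat X_{k,x}\tilde v\|_{L^2(du)}^2,\qquad \|R^{(-j)}_{k,x}v\|_{L^2(du)}^2=\tilde\epsilon^{\,Q+2j}\|R^{(-j)}_{k,x}\tilde v\|_{L^2(du)}^2,
\end{equation*}
so that $\|R^{(-j)}_{k,x}v\|\leq\tilde\epsilon^{\,j+1}\bigl(\|R^{(-j)}_{k,x}\tilde v\|/\|\widehat\nabla_x\tilde v\|\bigr)\|\widehat\nabla_x v\|$, where $\widehat\nabla_x:=(\widehat X_{1,x},\ldots,\widehat X_{p,x})$.

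The heart of the argument is then a uniform-in-$x$ sub-elliptic Poincar\'e estimate
\begin{equation*}
\|R^{(-j)}_{k,x}\tilde v\|_{L^2}\leq C\,\|\widehat\nabla_x\tilde v\|_{L^2} \qquad\text{for all }\tilde v\in C^\infty_0(K),
\end{equation*}
with $C$ uniform in $x\in M$ and $j$ bounded. This follows because the $\widehat X_{k,x}$ generate the Lie algebra $\mathcal{G}_x$ (so they form a H\"ormander system on $G_x$ whose iterated commutators control every tangent direction on the compact set $K$), combined with a standard compact-support Poincar\'e inequality for the zeroth-order part; uniformity in $x$ comes from the smooth dependence $x\mapsto\widehat X_{k,x}$ of Theorem~\ref{lemmaMet} together with the compactness of $M$. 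Assembling everything by Cauchy--Schwarz on the cross term, summing the (finitely many effective) homogeneous pieces of $R_{k,x}$, and collecting the $\tilde\epsilon=\epsilon^{1/r}$ factors against the measure-discrepancy bound produces the claim with $s:=1/r$. The main obstacle is precisely this uniform Poincar\'e-type control on the rescaled remainders; once it is in hand, the rest is homogeneity bookkeeping.
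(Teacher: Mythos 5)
Your rescaling framework is correct as bookkeeping: splitting off the measure discrepancy via $|a(x,u)-a(x,0)|\leq C\epsilon\,a(x,0)$, decomposing $R_{k,x}$ into homogeneous pieces, and applying $\delta_{\tilde\epsilon}$ with $\tilde\epsilon=\epsilon^{1/r}$ to land $\tilde v$ on a fixed compact set $K$ do produce the exponents $\tilde\epsilon^{\,j+1}$ you claim. The problem is the estimate you call the heart of the argument. The inequality $\|R^{(-j)}_{k,x}\tilde v\|_{L^2}\leq C\|\widehat\nabla_x\tilde v\|_{L^2}$ for $\tilde v\in C^\infty_0(K)$ is \emph{not} a standard compact-support Poincar\'e inequality, and the reason you offer for it does not work. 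The remainder $R^{(-j)}_{k,x}=\sum_i f_i(u)\,\partial_{u_i}$, with $f_i$ homogeneous of degree $\geq w_i$, is a genuine first-order vector field whose ``order $\leq 0$'' is only in the graded sense; it contains derivatives $\partial_{u_i}$ in non-horizontal directions with $w_i>1$. A Poincar\'e inequality bounds $\|\tilde v\|_{L^2}$ by $\|\widehat\nabla_x\tilde v\|_{L^2}$, but here the operand is a derivative of $\tilde v$ transverse to the distribution, and the subelliptic gain of a H\"ormander system of step $r$ is only fractional: one has $\|\tilde v\|_{H^{1/r}}\leq C(\|\widehat\nabla_x\tilde v\|_{L^2}+\|\tilde v\|_{L^2})$, not $L^2$ control of $\partial_{u_i}\tilde v$. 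Invoking iterated commutators and integrating by parts only shows that $R^{(-j)}_{k,x}\tilde v$ lies in a negative-order Sobolev space, which is not enough.

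The estimate you need is in fact true for functions of fixed compact support, but its proof \emph{is} the Rothschild--Stein machinery the paper uses: one writes $(X_{i,x}-\widehat X_{i,x})v=\sum_m K_{im,x}\widehat X_{m,x}v$, with $K_{im,x}$ built from $(X_{i,x}-\widehat X_{i,x})(-\widehat\Delta_x)^{-1}\widehat X_{m,x}$ plus cutoffs, identified as an operator of type $1$, hence bounded $L^2\to L^q$ with $1/q=1/2-1/Q$; H\"older on the compact support then converts this $L^q$ control into the $L^2$ control you want. That is exactly the content of Step 2 in the paper's proof (which skips your rescaling and instead applies H\"older directly on the small ball $B(x,\epsilon)$). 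So the decomposition plus dilation does not bypass the deep input; the ``main obstacle'' you flag is the whole proof, and the one-sentence justification you give for it is where the argument actually breaks down.
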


\begin{proof}
	Throughout the proof, we will make use of the uniformity with respect to $x$ of several geometric constructions around a point $x\in K$. This is discussed and proved in \cite{Me,HN, HN1,Rot} and will be used freely in what follows.
	
	\medskip
	
	We give the proof in three steps.
	
	\medskip
	
	\paragraph{\bf Step 1: Change of the measure}~\\
	We denote by $\tilde\nabla$ the sub-Riemannian gradient, so
	$$
	-\langle\Delta v,v\rangle_{L^2(M,\mu)} = \sum_{j=1}^p \|X_j v\|_{L^2(M,\mu)}^2 = \sum_{j=1}^{n_1} \|\tilde X_j v\|_{L^2(M,\mu)}^2 = \| \tilde\nabla v \|_{L^2(M,\mu)}^2 \,.
	$$
	Due to the localization of the support of $v$, we have
	\begin{align*}
		(1-C\varepsilon) \| \tilde\nabla v \|_{L^2(G_x,\widehat\mu_x)}^2
		\leq \| \tilde\nabla v \|_{L^2(M,\mu)}^2
		\leq (1+C\varepsilon) \| \tilde\nabla v \|_{L^2(G_x,\widehat\mu_x)}^2 \,.
	\end{align*}
	
	\medskip
	
	\paragraph{\bf Step 2: Comparing $\tilde X_{i,x}$ and  $\widehat X_{i,x}$}~\\
	Due to the compactness of $M$, there exists $\epsilon_0>0$ such that, for each $x\in M$ we have a localization function $\tilde \chi_x$ such that $\tilde \chi_x = 1$ on $B(x, 2\epsilon_0)$, $\supp(\tilde \chi_x) \subset B (x,4\epsilon_0)$ and all the estimates on the derivatives are controlled uniformly. After replacing $\epsilon_0$ by $\min\{\epsilon_0,\frac14\epsilon_*\}$ if necessary, we may assume that for each $x\in K$ the privileged coordinates at $x$ are well defined in $B(x,4\varepsilon_0)$.
	
	For technical reasons we also have to introduce another cut-off function $\widehat \chi_x$ of the same type, viz.\ such that $\widehat \chi_x = 1$ on $B(x, \epsilon_0)$, $\supp(\widehat \chi_x) \subset B (x,2\epsilon_0)$ and with uniform bounds on the derivatives.
	
	We observe that in the privileged coordinates centered at $x$  we have\footnote{See either Helffer--Nourrigat \cite[Proposition 5.1]{HN},  M\'etivier \cite{Me},  \cite[Subsection 4.2.4]{EL}, or \cite[Section~1]{Rot}.}  by Lemma~\ref{lemmaMet} that 
	$
	\tilde \chi_x (\tilde X_{i,x} -\widehat X_{i,x}) \tilde\chi_x
	$
	is of degree $0$.
	
	There is also a notion of type\footnote{The authors introduce first a notion of function of type $\lambda$ corresponding to a function that is homogeneous of degree $-Q +\lambda$ with respect to the dilation (with an addition condition when $\lambda=0$). By integration against test functions, this defines a distribution of type $\lambda$. More generally, assuming that $W\subset M$ is such that $\theta_x(y)$ is defined for all $x,y\in W$, then a function $K$ on $W\times W$ is a kernel of type $\lambda$ if for any $\ell \geq 0$, we have
		$$
		K(x,y) = \sum_{i=1}^s a_i(x) k_x^{(i)} (\theta_x(y)) b_i(y) + E^\ell (x,y)\,,
		$$
		where $a_i, b_i \in C_c^\infty(W)$, where $k_x^{(i)}$ is a kernel of type $\geq \lambda$ with $(x,u)\mapsto k_x^{(i)}(u)$  smooth away from $u=0$, and where $E^\ell \in C^\ell( W \times W)$. An operator of type $\lambda$ is a mapping originally defined on $C_c^\infty(W)$ whose distribution kernel is a kernel of type $\lambda$.}
	in \cite{RS} (see \cite[pp.\ 654--655]{Rot} for the adaptation to the equiregular situation), which roughly speaking corresponds to the operator of ``non positive" degree.
	
	Note that (after localization around $0$ in the privileged coordinates) $(-\widehat \Delta_x)^{-1}$ is an operator of type 2 and that $\widehat X_{m,x}(-\widehat \Delta_x)^{-1} $ is an operator of type $1$ \cite[Theorem 8]{RS}. Then $\tilde \chi_x (\tilde X_{i,x} -\widehat X_{i,x}) \tilde\chi_x$  is an operator of type 0. Consequently, for each $m=1,\ldots,n_1$,
	$$
	K_{im,x} := \widehat \chi_x \tilde \chi_x (\tilde X_{i,x} -\widehat X_{i,x}) \tilde\chi_x (-\widehat \Delta_x)^{-1} \widehat X_{m,x}
	$$
	is an operator of type 1. 
	
	For $v \in C_c^\infty (G_x)$,  we have the identity
	\begin{align*}
		\widehat \chi_x (\tilde X_{i,x} -\widehat X_{i,x}) \tilde\chi_x v & = - \sum_m \widehat \chi_x \tilde \chi_x ( \tilde X_{i,x} -\widehat X_{i,x}) \tilde\chi_x (-\widehat \Delta_x)^{-1} \widehat X_{m,x}  \widehat X_{m,x}  \tilde\chi_x v \\
		& = \sum_m  K_{im,x}  \widehat X_{m,x} \tilde \chi_x v \,.
	\end{align*}
	
	To estimate the $L^2$-norm of $ K_{im,x}  \widehat X_{m,x} \tilde \chi_x u$, we first compute the $L^q$-norm with $\frac 1q = \frac 12 - \frac 1Q$. Here we use \cite[Theorem 7]{RS} and obtain for $w$ with support in a fixed compact subset in Euclidean space
	$$
	\|   K_{im,x}   w  \|_q \leq C \|  w  \|_2 \,.
	$$
	
	Assume now that $v\in C^\infty_0(B(x,\epsilon_0))$ and note that, identifying $v$ with a function on $G_x$,
	$$
	\widehat \chi_x ( \tilde X_{i,x} -\widehat X_{i,x}) \tilde\chi_x v
	= ( \tilde X_{i,x} -\widehat X_{i,x}) v 
	\qquad\text{and}\qquad
	\widehat X_{m,x} \tilde \chi_x v = \widehat X_{m,x} v \,.
	$$
	Thus, applying the above inequality to $w= \widehat X_{m,x} \tilde \chi_x v$, which has support in the fixed compact set $B(x,\epsilon_0)$, we obtain
	\begin{align*}
		\| ( \tilde X_{i,x} -\widehat X_{i,x})v \|_q & \leq \sum_m \| K_{im,x}  \widehat X_{m,x} \tilde \chi_x v \|_q \leq C \sum_m \|  \widehat X_{m,x} \tilde \chi_x v \|_2 \\
		& \leq C \sqrt p \left( \sum_m \| \widehat X_{m,x} v \|_2^2 \right)^\frac12.
	\end{align*}
	If, moreover, we have $v\in C^\infty_0(B(x,\epsilon))$ with $\epsilon\leq\epsilon_0$, then, by H\"older's inequality, since $(\tilde X_{i,x} -\widehat X_{i,x})v$ is supported in $B(x,\epsilon)$,
	$$
	\| ( \tilde X_{i,x} -\widehat X_{i,x})v \|_2 \leq \tilde C \epsilon^{\frac n Q} \|  v \|_2
	$$
	for some $\tilde C$. Here we use the fact that $\mu(B(x,\epsilon))$ is bounded by a constant times $\epsilon^n$, uniformly in $x$ and $\epsilon\leq\epsilon_0$.
	
	\medskip
	
	\paragraph{\bf Step 3: End of the proof of the lemma}~\\
	We can now achieve the proof in the following way. Using Step 1, it is enough to use the measure $\widehat\mu_{x}$. We omit this below. Writing
	\begin{equation*}\begin{array}{l}
			\| \tilde \nabla v \|^2  +
			\langle  \widehat \Delta_{x} v, v \rangle \\ 
			=  \sum_i \left( 2 \langle (\tilde X_{i,x}-\widehat X_{i,x}) v, \widehat X_{i,x} v \rangle  + \langle ( \tilde X_{i,x}-\widehat X_{i,x}) v, (\tilde X_{i,x}-\widehat X_{i,x}) v\rangle \right)
		\end{array}
	\end{equation*}
	and using the inequality from Step 2, we arrive at the inequality in the lemma.
\end{proof}

\medskip

\subsection{On the Faber--Krahn constant}\label{sec:fk}~\\
Our goal in this subsection is to prove Lemma \ref{faberkrahn}, which contains the basic properties of the local Faber--Krahn constant $c^{\rm FK}_x$.

\begin{proof}[Proof of Lemma \ref{faberkrahn}]
	We divide the proof into two steps.
	
	\medskip
		
	\paragraph{\bf Step 1: Positivity of the Faber--Krahn constant}~\\
	We shall prove that $c_x^{\rm FK}>0$ for every $x\in M$. The uniform positivity asserted in the lemma then follows from our continuity arguments in Step 2.
	
	Thus, let $x\in M$ and, using Lemma \ref{locindep}, choose an open neighborhood $W\subset M$ of $x$ on which the nilpotentization procedure in Section \ref{s1} can be carried out. In \cite[Theorem 4.24]{Rot} it is shown that for any $1<p<\frac Q2$ there is a $C_{p} >0$ such that for any $f \in C_c^\infty (\mathbb R^n)$, we have
	\begin{equation}
		\label{eq:rothschild}
		\| (-\widehat \Delta_x)^{-1} f \|_q \leq C_{p} \| f \|_p
	\end{equation}
	with $\frac 1q=\frac 1p -\frac 2 Q$. 
	(This inequality is actually used as an intermediate result toward the proof that $-\widehat \Delta_x$ has a $(-Q+2)$-homogeneous fundamental solution $k_x(u)$ that depends smoothly on $x$; see \cite[Theorem 3.6]{Rot}.)\footnote{In \cite{Rot} it is also shown that $C_p$ can be chosen locally bounded with respect to the point $x\in W$, which can be used to give a direct proof of the claimed uniform positivity of $c^{\rm FK}_x$, independent of Step 2.}
	
	Assuming first that $Q>2$, we can apply \eqref{eq:rothschild} with $p=\frac{2Q}{Q+2}$ and $q=\frac{2Q}{Q-2}$ and obtain
	$$
	\| (-\hat\Delta_x)^{-\frac12} f\|_2^2 = 
	\int_{\R^d} f (-\widehat \Delta_x)^{-1} f \,d\widehat\mu_x \leq \| f\|_{\frac{2Q}{Q+2}} \|(-\hat\Delta_x)^{-1} f\|_{\frac{2Q}{Q-2}} \leq C_{\frac{2Q}{Q+2}} \| f \|_{\frac{2Q}{Q+2}}^2
	$$
	Since $(-\hat\Delta_x)^{-\frac12}$, considered as an operator $L^\frac{2Q}{Q+2}\to L^2$, and its adjoint, considered as an operator $L^2\to L^\frac{2Q}{Q-2}$, coincide on $L^\frac{2Q}{Q+2}\cap L^2$ (as a consequence of the selfadjointness of $(-\hat\Delta)^{-\frac12}$ on $L^2$), we obtain
	$$
	\| (-\hat\Delta_x)^{-\frac12} g\|_{\frac{2Q}{Q-2}}^2 \leq C_{\frac{2Q}{Q+2}} \| g \|_{2}^2 \,.
	$$
	Substituting $u=(-\hat\Delta_x)^{-\frac12} g$ we obtain the Sobolev inequality
	$$
	C_{\frac{2Q}{Q+2}}^{-1} \| u \|_{\frac{2Q}{Q-2}}^2 \leq \langle (-\hat\Delta_x) u,u \rangle \,.
	$$
	By H\"older's inequality, as in the proof of Proposition \ref{fksob} below, this implies that $c_x^{\rm FK}\geq C_{\frac{2Q}{Q+2}}^{-1}$. This proves the claimed positivity when $Q>2$.
	
	To deal with the cases $Q=1,2$ we follow an idea of Helffer and Nourrigat already used in \cite{Rot}. We study the operator $-\hat\Delta_x - \sum_{i=1}^s \partial_i^2$ on $G_x\times\R^s$ with $s\in\N$ chosen such that $Q+s>2$. Proceeding exactly as before, we obtain the Sobolev inequality
	$$
	C_{\frac{2Q}{Q+2}}^{-1} \| U \|_{\frac{2(Q+s)}{Q+s-2}}^2 \leq \langle (-\hat\Delta_x - \sum_{i=1}^s \partial_i^2) U,U \rangle \,.
	$$
	for functions $U$ on $G_x\times\R^s$. Applying this to a product function $U = u\otimes \phi$ with a fixed function $\phi\in C^1_c(\R^s)$, we obtain
	$$
	c\, C_{\frac{2Q}{Q+2}}^{-1} \| u \|_{\frac{2(Q+s)}{Q+s-2}}^2 \leq \langle (-\hat\Delta_x ) u, u \rangle + \|u\|_2^2 \,.
	$$
	with $c>0$ depending only on $\phi$. By a simple scaling argument in $G_x$ (see, e.g., \cite[Remark 2.47]{FLW}), this inequality can be brought in the form of a Sobolev interpolation inequality where on the right side a geometric mean of $\langle (-\hat\Delta_x ) u, u \rangle$ and $\|u\|_2^2$ appears. In this form one can again use H\"older's inequality, similarly as in the proof of Proposition \ref{fksob} below, to obtain the desired positive lower bound on the Faber--Krahn constant.
	
	\medskip
	
	An alternative way of proving this step could rely on Varopoulos's proof \cite{Va} of the Sobolev inequality.

	\medskip
	
	\paragraph{\bf Step 2: Continuity of the Faber--Krahn constant}\footnote{The statement and the following proof was suggested to us by Y.~Colin de Verdi\`ere.}~\\
	Let $W$ and $K$ be as in Subsection \ref{sec:comparinglaplacians}. Our aim is to prove that $x\mapsto c_x^{\rm FK}$ is H\"older continuous on $K$. More precisely, we will show that there are positive constants $C,\epsilon',\alpha$ (depending on $K$) such that for all $0<\epsilon\leq\epsilon'$ we have
	\begin{equation}
		\label{eq:fkholder}
		\left| \frac{c^{\rm FK}_x}{c^{\rm FK}_{x_0}} - 1 \right| \leq C \epsilon^\alpha
		\qquad\text{for all}\ x,x_0\in K \ \text{with}\ x\in B(x_0,\epsilon) \,.
	\end{equation}
	Once we have shown this, we can deduce the asserted H\"older continuity on $M$. Indeed, according to Lemma \ref{locindep} we can choose an open neighborhood $W$ around each point and then we can choose a slightly smaller neighborhood $W'$ with $K:=\overline{W'}\subset W$. By compactness we can cover $M$ by finitely many sets $W'$ and then the H\"older continuity on each $K$ implies the H\"older continuity on $M$.
	
	We also note that the pointwise positivity of $c^{\rm FK}_x$, proved in Step 1, together with \eqref{eq:fkholder} implies its uniform positivity on $K$. This implies the uniform positivity on $M$ asserted in the lemma by compactness of $M$.
	
	We turn now to the proof of \eqref{eq:fkholder}. Let $K\subset M$ and recall that there is an $\epsilon_*>0$ such that, for all $x\in K$, the map $\theta_x$ is defined in $B(x,\epsilon_*)$. It maps a neighborhood of $x$ in $M$ to a neighborhood of $0$ in $\R^n$, which we will identify with $G_x$. We fix $x_0\in K$ and restrict our attention to $x\in B(x_0,\frac12\epsilon_*)$. Note that for such $x$ the map $\theta_x$ is defined in $B(x_0,\frac12\epsilon_*)$.
	
	Let $U_x :=\theta_{x}(B(x_0,\frac12\epsilon_*))\subset G_x$. For a set $\Omega\subset U_{x_0}\subset G_{x_0}$ and a function $u\in C^\infty_c(\Omega)$, let 
	$$
	\Omega_x:= \theta_x\circ \theta_{x_0}^{-1}(\Omega)
	\qquad\text{and}\qquad
	u_x := u \circ \theta_{x_0} \circ \theta_x^{-1} \,.
	$$
	Then $u_x\in C^\infty_c(\Omega_x)$. Let $\epsilon_0$ be as in Lemma \ref{l:remainder2}. In the following we consider $x\in B(x_0,\epsilon)$ with $\epsilon\leq\min\{\frac12\epsilon_*,\epsilon_0\}$. It follows from Lemma \ref{l:remainder2} that
	$$
	(1-C \epsilon^s) \langle -\widehat\Delta_{x_0} u, u \rangle_{L^2(G_{x_0},\widehat\mu_{x_0})} \leq 
	\langle -\widehat\Delta_{x} u_x, u_x \rangle_{L^2(G_{x},\widehat\mu_{x})}
	\leq (1+C \epsilon^s) \langle -\widehat\Delta_{x_0} u, u \rangle_{L^2(G_{x_0},\widehat\mu_{x_0})}
	$$
	with constants $C$ and $s$ that are independent of $x$, $x_0$, $\epsilon$, $u$ and $\Omega$. Moreover,
	$$
	(1-C \epsilon) \| u \|_{L^2(G_{x_0},\widehat\mu_{x_0})}^2 \leq 
	\| u_x \|_{L^2(G_{x},\widehat\mu_{x})}^2
	\leq (1+C \epsilon^s) \| u \|_{L^2(G_{x_0},\widehat\mu_{x_0})}^2
	$$	
	with a (possibly different) constant $C$, but again independent of $x$, $x_0$, $\epsilon$, $u$ and $\Omega$. Note that the mapping $u\mapsto u_x$ is a bijection from $C^\infty_c(\Omega)$ to $C^\infty_c(\Omega_x)$. Therefore, combining the above bounds with the variational characterization of the first eigenvalue, we infer that
	\begin{equation}
		\label{eq:fkcontproof1}
		\left| \frac{\lambda_1(\Omega_x)}{\lambda_1(\Omega)} - 1 \right| \leq C \epsilon^{\min\{s,1\}} \,.\end{equation}
	Here $\lambda(\Omega_x)$ denotes the first eigenvalue of the Dirichlet realization of $-\widehat\Delta_x$ in $L^2(\Omega_x,\widehat\mu_x)$ and $\lambda(\Omega)$ denotes the first eigenvalue of the Dirichlet realization of $-\widehat\Delta_{x_0}$ in $L^2(\Omega,\widehat\mu_{x_0})$. Clearly, we also have
	\begin{equation}
		\label{eq:fkcontproof2}
		\left| \frac{\widehat\mu_x(\Omega_x)}{\widehat\mu_{x_0}(\Omega)} - 1 \right| \leq C \epsilon \,.
	\end{equation}
	We deduce that
	$$
	\left( \widehat\mu_x(\Omega_x)\right)^{\frac2Q} \lambda_1(\Omega_x)
	\geq \left(1- C \epsilon^{\min\{s,1\}}\right) \left( \widehat\mu_{x_0}(\Omega) \right)^{\frac2Q} \lambda_1(\Omega) \geq \left(1- C \epsilon^{\min\{s,1\}} \right) c^{\rm FK}_{x_0} \,.
	$$
	Since the map $\Omega\mapsto\Omega_x$ is a bijection from open subsets of $U_{x_0}$ to open subsets of $U_x$, we obtain the inequality
	\begin{equation}
		\label{eq:fkcontproof}
		\left( \widehat\mu_x(\omega)\right)^{\frac2Q} \lambda_1(\omega)
		\geq \left(1- C \epsilon^{\min\{s,1\}} \right) c^{\rm FK}_{x_0}
	\end{equation}
	for any open $\omega\subset U_x$. 
	
	Recall that we have dilations on $G_x$. Under a dilation of $\omega$, the eigenvalue $\lambda_1(\omega)$ of $-\widehat\Delta_x$ is homogeneous of degree $-2$, while the measure $\widehat\mu_x(\omega)$ is homogeneous of degree $Q$. Therefore the product $\left( \widehat\mu_x(\omega)\right)^{\frac2Q} \lambda_1(\omega)$ is homogeneous of degree zero. Since $0\in U_x$ (as $x\in B(x_0,\frac12\epsilon_*)$), we can dilate any bounded set $\omega\subset G_x$ (where boundedness is understood, for instance, with respect to the Euclidean metric on $G_x$ identified with $\R^n$) so that it becomes a subset of $U_x$ and we deduce that \eqref{eq:fkcontproof} holds for any bounded open set $\omega\subset G_x$. Finally, the boundedness assumption on $\omega$ can be relaxed to a finite-measure assumption if we recall that the variational quotient defining $\lambda_1(\omega)$ only needs to be considered for functions in $C^\infty_c(\omega)$. To summarize, we have shown that \eqref{eq:fkcontproof} holds for any open set $\omega\subset G_x$ of finite measure. This proves that
	$$
	c_x^{\rm FK} \geq \left(1- C \epsilon^{\min\{s,1\}} \right) c^{\rm FK}_{x_0} \,.
	$$
	The analogous inequality where the roles of $x$ and $x_0$ are interchanged is deduced from \eqref{eq:fkcontproof1} and \eqref{eq:fkcontproof2} in essentially the same way. This concludes the proof of (H\"older continuity) of $x\mapsto c^{\rm FK}_x$ at $x_0$. Moreover, the constants in this H\"older continuity bound only depend on $K$, as claimed.
\end{proof}


\medskip

\subsection{The Faber--Krahn inequality on small sets}~\\
We now come to the main step in the proof of Theorem \ref{t:manifold}. As in the work of B\'erard and Meyer~\cite{BerMe}, the idea is to prove a Faber--Krahn inequality where the constant is ``almost'' the ``good'' constant, provided the sets on which the inequality is applied are ``small''. For us, the ``good'' constant is in fact a function on $M$, namely $x\mapsto c^{\rm FK}_x$, and we capture the variation of this constant in terms of the measure $(c_x^{\rm FK})^{-\frac Q2}\mu$. The ``smallness'' of sets is understood with respect to their $\mu$-measure. The precise statement is the following:

\begin{proposition}\label{faberkrahnbm}
	For any $\theta>0$ there is an $\eta>0$ such that for any open set $\Omega\subset M$ with $\mu(\Omega)\leq\eta$ and any $v\in C^\infty_0(\Omega)$,
	\begin{equation}
		\langle -\Delta^{M,\mu}_\Xb v,v \rangle \geq  (1-\theta) \left( \int_\Omega \left( c_x^{\rm FK}\right)^{-\frac Q2} d\mu(x) \right)^{-\frac2Q} \,  \| v\|^2\,.
	\end{equation}
\end{proposition}

\begin{proof}
	Recall that by Lemma \ref{locindep} any point $a\in M$ has a neighborhood $W_a$ where the nilpotentization procedure in Section \ref{s1} can be carried out. We fix an open neighborhood $W_a'$ of $a$ with $\overline{W_a'}\subset W_a$. By compactness there are finitely many points $a_1,\ldots,a_L\in M$ such that $\bigcup_{\ell=1}^L W_{a_\ell}' = M$. We apply Lemma \ref{l:remainder2} with $W_{a_\ell}$ and $\overline{W_{a_\ell} '}$ in place of $W$ and $K$ and obtain constants $C$, $\epsilon_0$ and $s$ such that the conclusion of that lemma holds. We may and will assume that these constants are independent of $\ell$.	
	
	For each $\varepsilon\in(0,\epsilon_0]$, we introduce a family of smooth cut-off functions  $\chi_j:M\rightarrow \R$ such that
	\begin{itemize}
		\item  $\sum_j \chi_j^2= 1$ everywhere, 
		\item  for each $j$ there exists $x_j = x_j(\epsilon)\in M$ with $\supp (\chi_j) \subset B(x_j(\epsilon),\epsilon)$,
		\item there exists $C>0$ (independent of $\varepsilon>0$) such that everywhere in $M$,
		$$
		\sum_j |\nabla \chi_j|^2 \leq C \varepsilon^{-2} \,.
		$$
	\end{itemize}
	For any $v\in C^\infty(M)$ we have the identity
	\begin{align}\label{e:identity}
		\langle -\Delta^{M,\mu}_\Xb v,v\rangle_{L^2(M)} & = - \sum_j \langle \chi_j \Delta^{M,\mu}_\Xb v,\chi_j v \rangle_{L^2(M)} \nonumber \\
		&= \sum_j \left( - \langle [\chi_j,\Delta^{M,\mu}_\Xb]v,\chi_jv \rangle_{L^2(M)} - \langle \Delta^{M,\mu}_\Xb (\chi_jv ), \chi_j v\rangle_{L^2(M)} \right) \nonumber \\
		&=\sum_j \left( - \| v \tilde \nabla\chi_j \|_{L^2(M)}^2 - \langle \Delta^{M,\mu}_\Xb (\chi_j v),\chi_j v \rangle_{L^2(M)} \right). 
	\end{align}
	Here, as in the previous proof, $\tilde \nabla$  is not the Euclidean gradient, but the sub-Riemannian gradient, and 
	$$
	\| v\tilde \nabla\chi_j \|_{L^2(M)}^2 = \sum_{\ell} \| v X_\ell \,\chi_j \|_{L^2(M)}^2\,.
	$$
	
	Note first that, by our construction of the $\chi_j$ and the compactness of $M$, there exists a constant $C_1$ such that
	$$
	\sum_{\ell} \| v X_\ell \,\chi_j \|_{L^2(M)}^2 \leq C_1 \epsilon^{-2} \|v\|_{L^2(M)}^2\,.
	$$
	Let $\eta>0$ be a parameter that will be chosen later depending on $\epsilon$. Assuming that $v\in C^\infty_0(\Omega)$ with $\mu(\Omega)\leq\eta$, we use the previous bound to get
	\begin{align}
		\label{eq:remainder1}
		\sum_{\ell} \| v X_\ell \,\chi_j \|_{L^2(M)}^2 \leq C_1 \epsilon^{-2} 
		\eta^\frac2Q \left( \inf_{x\in M} c_x^{\rm FK} \right)^{-1}
		\left( \int_\Omega \left( c^{\rm FK}_x \right)^{-\frac Q2} d\mu(x) \right)^{-\frac 2Q}
		\|v\|_{L^2(M)}^2 \,.
	\end{align}
	Note that the infimum on the right side is finite by Lemma \ref{faberkrahn}.
	
	We turn our attention to the last term in \eqref{e:identity}. Note that Lemma \ref{l:remainder2} is applicable since $\epsilon\leq\epsilon_0$ and since for any $j$ there is an $\ell$ with $x_j\in W_{a_\ell}'$. We infer that
	\begin{align*}
		& \left| \langle \Delta^{M,\mu}_\Xb \chi_j v, \chi_j v \rangle_{L^2(M,\mu)} - \langle \widehat{\Delta}_{x_j} \chi_j v, \chi_j v \rangle_{L^2(G_{x_j,\widehat\mu_{x_j}})} \right| \\
		& \leq C_2 \varepsilon^{s} \langle - \widehat{\Delta}_{x_j} (\chi_j v), \chi_j v \rangle_{L^2(G_{x_j},\widehat\mu_{x_j})} \,.
	\end{align*}
	We combine this bound with the local Faber--Krahn inequality \eqref{eq:FKx1},
	\begin{equation}
		- \langle \widehat \Delta_{x_j}\chi_jv, \chi_jv \rangle_{L^2(G_{x_j},\widehat\mu_{x_j})} \geq c^{\rm FK}_{x_j} \ \widehat\mu_{x_j}(\supp (\chi_j v))^{-\frac2Q} \, \|\chi_j v \|_{L^2(G_{x_j},\widehat\mu_{x_j})}^2 \,.
	\end{equation}
	By the H\"older continuity of $x\mapsto c^{\rm FK}_x$ (Lemma \ref{faberkrahn}) we have
	$$
	c_{x_j}^{\rm FK} \geq (1-C_3 \epsilon^t) \, c_x^{\rm FK}
	\qquad\text{for all} \ x\in B(x_j,\epsilon) \,.
	$$
	This, together with the smoothness of the measure, implies that
	\begin{align*}
		c^{\rm FK}_{x_j} \ \widehat\mu_{x_j}(\supp (\chi_j v))^{-\frac2Q}
		& \geq (1-C_4 \epsilon^t) \left( \int_{\supp(\chi_j v)} \left( c^{\rm FK}_x \right)^{-\frac Q2} d\mu(x) \right)^{-\frac 2Q} \\
		& \geq (1-C_4 \epsilon^t) \left( \int_\Omega \left( c^{\rm FK}_x \right)^{-\frac Q2} d\mu(x) \right)^{-\frac 2Q}.
	\end{align*}
	Moreover, again by the smoothness of the measure,
	$$
	\|\chi_j v \|_{L^2(G_{x_j},\widehat\mu_{x_j})}^2 \geq (1-C_5 \varepsilon) \|\chi_j v \|_{L^2(M)}^2 \,.
	$$
	Thus, we have proved that
	\begin{align*}
		\langle -\Delta^{M,\mu}_\Xb (\chi_j v),\chi_j v \rangle_{L^2(M)} & \geq (1-C_2 \epsilon^s)(1-C_4 \epsilon^t) (1+C_5\epsilon)^{-1} \\
		& \quad \times
		\left( \int_\Omega \left( c^{\rm FK}_x \right)^{-\frac Q2} d\mu(x) \right)^{-\frac 2Q}
		\|\chi_j v \|_{L^2(M)}^2 \,.
	\end{align*}
	
	Summing over $j$, and inserting the resulting bound together with \eqref{eq:remainder1} into \eqref{e:identity}, we obtain
	\begin{align*}
		- \langle \Delta v, v \rangle_{L^2(M)} & \geq \left( (1-C_2 \epsilon^s)(1-C_4 \epsilon^t) (1+C_5\epsilon)^{-1} - C_1 \epsilon^{-2} \left( \inf_{x\in M} c_x^{\rm FK} \right)^{-1} \eta^\frac2Q \right) \\
		& \quad \times 	\left( \int_\Omega \left( c^{\rm FK}_x \right)^{-\frac Q2} d\mu(x) \right)^{-\frac 2Q} \| v \|_{L^2(M)}^2 \,.
	\end{align*}
	Now given $\theta>0$ we first choose $\epsilon\in(0,\epsilon_0]$ such that
	$$
	(1-C_2 \epsilon^s)(1-C_4 \epsilon^t) (1+C_5\epsilon)^{-1} \leq 1- \frac{\theta}{2}
	$$
	and then $\eta>0$ such that
	$$
	C_1 \epsilon^{-2} \left( \inf_{x\in M} c_x^{\rm FK} \right)^{-1} \eta^\frac2Q \leq \frac{\theta}{2} \,.
	$$
	In this way we obtain the claimed inequality.
\end{proof}


\medskip

\subsection{Proof of Theorem \ref{t:manifold}}~\\
We are finally in position to give the proof of our main result for sub-Laplacians in the equiregular case.

\begin{proof}[Proof of Theorem \ref{t:manifold}]
	Let $u_k$ be an eigenfunction associated with $\lambda_k$. Given $\theta>0$, let $\eta$ be as in Proposition \ref{faberkrahnbm} and fix a nodal domain $D_{k\ell}$ of $u_k$ with $\mu(D_{k\ell})\leq\eta$. We denote by $\underline{u}_{k\ell}$ the restriction of $u_k$ to $D_{k\ell}$, extended by $0$ outside $D_{k\ell}$. Then, $\underline{u}_{k\ell}$ is not necessarily in the operator domain of $-\Delta$, but it is in the form domain, as shown in Theorem \ref{restriction}. Therefore it can be approximated with respect to the form-norm by $C^\infty_0(D_{k\ell})$ functions. Therefore we deduce from Proposition \ref{faberkrahnbm} that
	$$
	\langle -\Delta \underline{u}_{k\ell},\underline{u}_{k\ell}\rangle_{L^2(M)}
	\geq (1-\theta) \left( \int_{D_{k\ell}} \left( c_x^{\rm FK} \right)^{-\frac Q2} d\mu(x) \right)^{-\frac 2Q} \| \underline{u}_{k\ell} \|_{L^2(M)}^2 \,. 
	$$
	Here we are slightly abusing notation by writing $\langle -\Delta \underline{u}_{k\ell},\underline{u}_{k\ell}\rangle_{L^2(M)}$ instead of the more precise $\| \tilde \nabla \underline{u}_{k\ell}\|^2_{L^2(M)}$. Similarly, using the weak formulation of the eigenvalue equation and the fact that $\underline{u}_{k\ell}=u_k$ on $D_{k\ell}$ we find
	$$
	\langle -\Delta \underline{u}_{k\ell},\underline{u}_{k\ell}\rangle_{L^2(M)} = \lambda_k \|\underline{u}_{k\ell}\|_{L^2(M)}^{2} \,.
	$$
	Combining the two previous equations and noting that $\underline{u_{k\ell}}$ does not vanish identically, we obtain the inequality
	\begin{equation}\label{e:almostthere} 
		\lambda_k \geq (1-\theta) \left( \int_{D_{k\ell}} \left( c_x^{\rm FK} \right)^{-\frac Q2} d\mu(x) \right)^{-\frac 2Q} \,.
	\end{equation}
	We denote by $\mathcal A_\eta$ the family of nodal sets of $u_k$ satisfying $\mu(D_{k\ell})\leq \eta$. Raising \eqref{e:almostthere} to the power $Q/2$ and summing over $\ell$ we obtain
	$$
	\lambda_k^{\frac Q2} \int_M \left( c_x^{\rm FK} \right)^{-\frac Q2} d\mu(x) 
	\geq \lambda_k^{\frac Q2} \sum_{D_{k\ell}\in \mathcal{A}_\eta} \int_{D_{k\ell}} \left( c_x^{\rm FK} \right)^{-\frac Q2} d\mu(x)  \geq (1-\theta)^\frac Q2 \, ( \#\mathcal{A}_\eta) \,.
	$$
	Clearly, for the number of nodal sets of $u_k$ with $\mu$-measure exceeding $\eta$ we have
	$$
	\eta \#\{ \ell:\ D_{k\ell}\not\in \mathcal A_\eta \} \leq \sum_{D_{k\ell}\not\in \mathcal{A}_\eta} \mu(D_{k\ell}) \leq \mu(M) \,.
	$$
	The two previous relations imply that
	$$
	\frac{\nu_k}{k} \leq (1-\theta)^{-\frac Q2} \left( \int_M \left( c_x^{\rm FK} \right)^{-\frac Q2} d\mu(x) \right) \frac{\lambda_k^\frac{Q}{2}}{k} + \eta^{-1} \mu(M) \frac{1}{k} \,.
	$$
	Combining this bound with the Weyl law from Theorem \ref{weylmet}, we obtain
	$$
	\limsup_{k\to\infty} \frac{\nu_k}{k}\leq (1-\theta)^{-\frac Q2} \left( \int_M \left( c_x^{\rm FK} \right)^{-\frac Q2} d\mu(x) \right) \Big( \int_M c_x^{\rm Weyl} \, d\mu(x) \Big)^{-1}.
	$$
	Since $\theta>0$ is arbitrary, we obtain the bound claimed in Theorem \ref{t:manifold}.
\end{proof}



\section{Basic examples}\label{AppA}

In this section we give examples of the applicability of Theorem \ref{t:manifold}. More precisely, we will use the version from Remark \ref{remdom}, which concerns the result on an open subset with Dirichlet boundary conditions. We will present the same computation in various forms which, we hope, illustrates the different techniques that can be applied for a concrete operator.


\medskip

\subsection{Presentation}~\\
We denote coordinates on $\R^3$ by $(x,y,z)$. In an open subset $\Omega\subset\R^3$, we consider the vector fields 
$$ 
X_1= \frac{\partial}{\partial x} + K_1(x,y) \frac{\partial}{\partial z}\,,\,
\qquad
 X_2= \frac{\partial}{\partial y} + K_2 (x,y) \frac{\partial}{\partial z} \,,
$$
und the assumption that
$$
{\rm curl\,} \, \vec K = \frac{\partial}{\partial x} K_2 - \frac{\partial}{\partial y} K_1 >0
$$
As measure $\mu$ we take simply the Lebesgue measure $dx\,dy\,dz$.

Our aim is to give an explicit criterion for getting Pleijel's theorem for 
$$
\Delta=X_1^2+X_2^2 \,.
$$

Let $(x_0,y_0,z_0)\in\Omega$. Then in the construction $G_{x_0,y_0,z_0}$ is the Heisenberg group $\mathbb H$ and the privileged coordinates at $(x_0,y_0,z_0)$ are given (modulo higher order term if we use the canonical privileged coordinates) in the form
\begin{equation} \label{eq:cdv}
	u_1= x-x_0  \,,\qquad  u_2 = y-y_0 \,,\qquad u_3 = \widehat \delta  (z-z_0) + P (x-x_0,y-y_0)\,,
\end{equation}
where $P$ is a polynomial of order $2$ and
$$
\widehat \delta =\frac{1}{{\rm curl\,}\, \vec{K} (x_0,y_0)} \,.
$$
Hence the candidate to be the nilpotentized measure at $(x_0,y_0,z_0)$ is
\begin{equation}\label{eq:exmeas}
	d\widehat\mu_{(x_0,y_0,z_0)} = {\rm curl\,}\, \vec{K} (x_0,y_0)\, du_1 \, du_2 \,du_3\,.
\end{equation}
In these coordinates (which are not a priori privileged\footnote{We refer to \cite{ChoPo1,ChoPo2} for a complete description of possible privileged coordinates} but see the detailed discussion below), we have
\begin{equation}\label{eq:chapeau}
	\widehat X_1= \frac{\partial}{\partial u_1}  - \frac 12 u_2 \frac{\partial}{\partial u_3} \,,\qquad \widehat X_2= \frac{\partial}{\partial u_2}  + \frac 12 u_1  \frac{\partial}{\partial u_3} \,,\qquad	 \widehat X_3 = \frac{\partial}{\partial u_3} \,.
\end{equation}

\medskip

\subsection{Direct change of variables}~\\
Given a point $(x_0,y_0,z_0)$, it is enough to replace $K_1$ and $K_2$ by their linear approximation and it remains to show that 
this linearization will give us $\widehat X_1$ and $\widehat X_2$ in suitable coordinates.\\ Modulo a translation, we can assume that
$(x_0,y_0,z_0)= (0,0,0) \in \Omega$.\\ It remains to find the changes of variables transforming
$$ X_1^{lin}= \frac{\partial}{\partial x} + ( K_1(0,0)) + \alpha x + \beta y) \frac{\partial}{\partial z}\,,\qquad  X_2^{lin}= \frac{\partial}{\partial y} +( K_2 (0,0) + \gamma x + \delta y) ) \frac{\partial}{\partial z} \,,
$$
Note that
$$
[X_1^{lin},X_1^{lin}] = (\gamma -\beta)  \frac{\partial}{\partial z}
$$
with
$$
\gamma -\beta = {\rm curl } K (x_0,y_0)\,.
$$

We proceed step by step for pedagogical reasons, each change of variable having the form described above.
\begin{itemize}
	\item First change to replace $K_1(0,0)$ and $K_2(0,0)$ by $0$.\\
	For this, we take a change of variable in the form 
	$$
	\tilde x = x\,,\qquad
	\tilde y =y\,,\qquad \tilde z = z - K_1(0,0) x - K_2(0,0) y\,,
	$$
	and get in the new coordinates
	$$ 
	X_1^{lin}= \frac{\partial}{\partial \tilde x} + ( \alpha \tilde x + \beta \tilde y) \frac{\partial}{\partial \tilde z} \,,\qquad
  	X_2^{lin}= \frac{\partial}{\partial \tilde y} +( \gamma \tilde x + \delta \tilde y) \frac{\partial}{\partial \tilde z} \,,
	$$
	From now on, we forget the tilde.
	\item Second change of variables in order to have $\alpha=0$ and $\delta =0$. For this,  we take a change of variable in the form
	$$
	\tilde x = x\,,\qquad \tilde y =y\,,\qquad \tilde z = z - \frac 12 \alpha x^2 - \frac 12 \delta y^2\,,
	$$
	and get in the new coordinates
	$$ 
	X_1^{lin}= \frac{\partial}{\partial \tilde x} +  \beta \tilde y \frac{\partial}{\partial \tilde z} \,,\qquad  X_2^{lin}= \frac{\partial}{\partial \tilde y} +  \gamma \tilde x \frac{\partial}{\partial \tilde z} \,.
	$$
	Again  we forget the tilde.
	\item Third change of variables in order to have $\beta^{new} = - \gamma^{new}$. This time, we  we take a change of variable in the form, for some $\rho$ 
	$$
	\tilde x = x\,,\qquad \tilde y =y\,,\qquad \tilde z = z - \rho xy \,,
	$$
	and we impose $\beta -\rho = - (\gamma -\rho)$, which leads to $\rho = ( \beta + \gamma)	/2$. We  get in the new coordinates
	$$ 
	X_1^{lin}= \frac{\partial}{\partial \tilde x} + \frac{ \beta-\gamma}{2}  \tilde y \frac{\partial}{\partial \tilde z} \,,\qquad  X_2^{lin}= \frac{\partial}{\partial \tilde y} - \frac{( \beta -\gamma)}{2}  \tilde x \frac{\partial}{\partial \tilde z} \,.$$
	Note that till now we have respected the Lebesgue measure!
	\item The last step consists in the change of variable
	$$\tilde x = x\,,\qquad \tilde y =y\,,\qquad \tilde z = \widehat \delta z \,.$$
	This time, this changes of variable does not respect the Lebesgue measure and involves the curl of $\vec K$ at the chosen point $(x_0,y_0)$.  Note that with this change of variables (although it is not the canonical privileged coordinate, but see below the second approach) we have obtained the conclusion of Lemma \ref{lemmaMet} which is what is used in the proof referring to \cite{Rot}.\\
\end{itemize}
More precisely, if we write $(X_1,X_2)$ in the new system of coordinates we get
$$
\begin{array}{ll}
	X_{1,(x_0,y_0,z_0)} & = \widehat X_1 + \big(R_{1,x_0,y_0}/ {\rm curl } K (x_0,y_0) \big)\frac{\partial}{\partial u_3},\\  X_{2,(x_0,y_0,z_0)} & = \widehat X_2 + \big(R_{2,x_0,y_0}/ {\rm curl } K (x_0,y_0)\big) \frac{\partial}{\partial u_3},
\end{array}
$$
Here, if we write $K_j(x,y)= K_j(x_0,y_0) + \nabla K_j(x_0,y_0) \cdot (x-x_0, y-y_0) + r_{j,x_0,y_0} (x,y)$, we have
$$
R_{j,x_0,y_0}(u_1,u_2) := r_{j,x_0,y_0}(x_0+u_1,y_0+u_2)= \mathcal O (u_1^2 + u_2^2)
$$

\medskip

\subsection{Computation of the canonical privileged coordinates and application}~\\
We start from \eqref{eq:pc} with
$$
Y_1:=X_1 \,,\qquad
Y_2:=X_2 \,,\qquad
Y_3:=[X_1,X_2] = {\rm curl\,} \vec K\, \frac{\partial}{\partial z}\,.
$$
This equation reads with the notation of above and assuming $(x_0,y_0,z_0)= (0,0,0)$
\begin{equation}\label{eq:pc3}
	(x,y,z)=  \exp (\sum u_{i}Y_i) \cdot (0,0,0)\,.
\end{equation}
We observe that 
$$
\sum u_{i}Y_i = u_1 \frac{\partial}{\partial x} + u_2 \frac{\partial}{\partial y} +(  {\rm curl\,} \vec K\, u_3  + K_1 u_1 + K_2 u_2  ) \frac{\partial}{\partial z} \,.
$$
To compute the right hand side of \eqref{eq:pc3} we have to compute the solution at $t=1$ of the differential system
$$
\begin{array}{ll}
	\frac{dx}{dt} &=u_1\\
	\frac{dy}{dt} &=u_2 \\
	\frac{dz}{dt} &= {\rm curl\,} \vec K (x,y)\, u_3  + K_1(x,y) u_1 + K_2 (x,y)u_2\,.
\end{array}
$$
with $(x(0),y(0),z(0))=(0,0,0)$.\\
This is easy to solve. We have $x(t)= tu_1$, $y(t) =t u_2$ and
$$
z(t) = \int_0^t ({\rm curl\,} \vec K (su_1,su_2)\, u_3  + K_1(su_1,su_2) u_1 + K_2 ( su_1,su_2)u_2 )\, ds \,.
$$

Hence we get 
\begin{align*}
	(x,y,z) & :=\exp (\sum u_{i}Y_i) \cdot (0,0,0)\\
	&= \left( u_1,u_2,  \int_0^1({\rm curl\,} \vec K (su_1,su_2)\, u_3  + K_1(su_1,su_2) u_1 + K_2  (su_1,su_2)u_2) \, ds\right) .
\end{align*}
This is a local diffeomorphism since the differential of the map $(u_1,u_2,u_3) \mapsto (x,y,z)$ at $(u_1,u_2,u_3)=(0,0,0)$ is
$$
\left(\begin{array}{ccc}
	1&0&0\\
	0&1&0\\
	K_1(0,0)&K_2(0,0)&{\rm curl\,} \vec K (0,0)
\end{array}\right).
$$
We note (for comparison with the first approach) that
\begin{align*}
	& \int_0^1({\rm curl\,} \vec K (su_1,su_2)\, u_3  + K_1(su_1,su_2) u_1 + K_2 ( (su_1,su_2)u_2)\, ds \\
	& \quad = {\rm curl\,} \vec K (0,0) u_3 + K_1(0,0) u_1+ K_2(0,0) u_2\\
	& \qquad
	+ \frac12 \Big[ \frac{\partial K_1}{\partial x}(0,0) u_1^2+ \frac{\partial K_1}{\partial y}(0,0) u_1u_2 + \frac{\partial K_2}{\partial x}(0,0) u_1u_2 + \frac{\partial K_2}{\partial y}(0,0) u_2^2\Big]\\ 
	& \qquad + r(u_1,u_2,u_3)
\end{align*}
with $r$ of degree  $\leq -3$ for the dilation. Note that if we neglect the remainder $r$, this change of variable is exactly \eqref{eq:cdv}.

For the inverse we get indeed $u_1=x$, $u_2=y$ and
\begin{align*}
	u_3 & = \frac{1}{{\rm curl\,} \vec K (0,0)} \Big( z - \frac12 \big[ \frac{\partial K_1}{\partial x}(0,0) x^2+ ( \frac{\partial K_1}{\partial y}(0,0)  + \frac{\partial K_2}{\partial x}(0,0) )xy 
	+ \frac{\partial K_2}{\partial y}(0,0) y^2\big]\Big) \\ & \quad  + r(x,y,z)\,.
\end{align*}
The computation then go in the same way as in the previous approach with this choice of privileged coordinates. Hence we get the same $\widehat X_i$ at $(x_0,y_0,z_0)$, which are independent of $(x_0,y_0,z_0)$ and given by \eqref{eq:chapeau}. It is clear that  \eqref{eq:exmeas} also holds.

We can then apply Lemma \ref{lemmaMet}.

\medskip

\subsection{Conclusion in the setting of the example}~\\
In order to apply Theorem \ref{t:manifold} we need to compute the local Weyl constant $c^{\rm Weyl}_{(x_0,y_0,z_0)}$, as well as the local Faber--Krahn constant $c_{(x_0,y_0,z_0)}^{\rm FK}$. We will express them in terms of the corresponding constants on the Heisenberg group $\mathbb H_1$, which will be studied in more detail in Part \ref{part2} of this paper.

There is, however, a slight notational inconsistency coming from different normalizations. In the present section we arrived at the vector fields \eqref{eq:chapeau}, whereas in Part~\ref{part2} we will find it more convenient to work in the formulation \eqref{Otherformulation}. These two formulations are equivalent via scaling. More precisely, if $\mathcal W(\mathbb H_1)$ and $C^{\rm FK}(\mathbb H_1)$ denote the Weyl and Faber--Krahn constants in the normalization of \eqref{Otherformulation} and if
$\widetilde{\mathcal W}(\mathbb H_1)$ and $\widetilde C^{\rm FK}(\mathbb H_1)$ denote the corresponding constants in the normalization of \eqref{eq:chapeau}, then
$$
\widetilde{\mathcal W}(\mathbb H_1) = 4\, \mathcal W(\mathbb H_1)
\qquad\text{and}\qquad
\widetilde C^{\rm FK}(\mathbb H_1) = 2^{-1}\, C^{\rm FK}(\mathbb H_1) \,.
$$
As a consequence, the combination
\begin{equation}
	\label{eq:pleijelheisenexample}
	\left( \widetilde C^{\rm FK}(\mathbb H_1) \right)^{-2} \left( \widetilde{\mathcal W}(\mathbb H_1) \right)^{-1} = \left( C^{\rm FK}(\mathbb H_1) \right)^{-2} \left( \mathcal W(\mathbb H_1) \right)^{-1} = \gamma(\mathbb H_1)
\end{equation}
is independent of the normalization. The right equality in \eqref{eq:pleijelheisenexample} is a definition; see \eqref{eq:defgamma}. The number $\gamma(\mathbb H_1)$ plays the role of the Pleijel constant on $\mathbb H_1$.

\begin{lemma}\label{exampleconst}
	In our example we have for every $(x_0,y_0,z_0)\in\Omega$,
	$$
	c^{\rm Weyl}_{(x_0,y_0,z_0)} = \frac{\widetilde{\mathcal W}(\mathbb H_1)}{\mathrm{curl}\, \vec{K}(x_0,y_0)}
	\qquad\text{and}\qquad
	c_{(x_0,y_0,z_0)}^{\rm FK} = \sqrt{\mathrm{curl}\, \vec{K}(x_0,y_0)} \ \widetilde C^{\rm FK}(\mathbb H_1) \,.
	$$
\end{lemma}

\begin{proof}
	The assertions follow by a simple scaling argument. According to \eqref{eq:chapeau} the differential expression for the nilpotent approximation $\widehat\Delta_{(x_0,y_0,z_0)}$ coincides with the Laplacian on the Heisenberg group, the only difference is that the measure $\widehat\mu_{(x_0,y_0,z_0)}$ is Lebesgue measure multiplied by the positive constant $\mathrm{curl}\, \vec{K}(x_0,y_0)$; see \eqref{eq:exmeas}. This factor leads to the corresponding expressions for the local Weyl and Faber--Krahn constants.
\end{proof}

It follows from Lemma \ref{exampleconst} that
$$
\int_\Omega c^{\rm Weyl}_{(x,y,z)} \,dx\,dy\,dz = \widetilde{\mathcal W}(\mathbb H_1) \ \int_\Omega \frac{dx\,dy\,dz}{\text{curl}\, \vec{K}(x,y)}
$$
and
$$
\int_\Omega (c_{(x,y,z)}^{\rm FK})^{-2} \,dx\,dy\,dz = \widetilde C^{\rm FK}(\mathbb H_1) \, \int_\Omega \frac{dx\,dy\,dz}{\text{curl}\, \vec{K}(x,y)} \,.
$$
Therefore, from Theorem \ref{t:manifold} (or, more precisely, its version for the sub-Laplacian on an open set with Dirichlet boundary conditions in Remark \ref{remdom}) we deduce the bound
$$
\limsup_{k\rightarrow+\infty} \frac{\nu_k}{k} \leq \left( \widetilde C^{\rm FK}(\mathbb H_1) \right)^{-2} \left( \widetilde{\mathcal W}(\mathbb H_1) \right)^{-1} = \gamma(\mathbb H_1)
$$
Here we used \eqref{eq:pleijelheisenexample}. Our conclusion is that \emph{if the Pleijel constant $\gamma(\mathbb H_1)$ on the Heisenberg group is $<1$, then Pleijel's theorem holds in the present example.}

We will investigate the validity of the inequality $\gamma(\mathbb H_1)<1$ in Part \ref{part2}. Currently we have no proof of this bound, but we will prove that it holds provided a well-known conjecture by Pansu concerning the isoperimetric inequality on the Heisenberg group is true.


\medskip

\subsection{Variations of the example}~\\
To have a example where we can prove that Pleijel's theorem holds, we could repeat the above analysis with
$$
X_1^2 + X_2^2 +\Delta_w
$$
on $\Omega \subset \R^3 \times \mathbb R^k$ with $k\geq 3$. As will be discussed in Part \ref{part2}, in this case the analogue $\frac{1}{\gamma (\mathbb H \times \mathbb R^k)}$  of $\left( \widetilde C^{\rm FK}(\mathbb H_1) \right)^2 \widetilde W (\mathbb  H_1) $ is indeed $>1$ (see Theorem \ref{maincomp} together with \eqref{eq:defgamma}). Therefore Pleijel's theorem holds in this example. We do not carry out the details.

\medskip

More generally, in an open subset of $\R^n\times\R^n\times\R$, $n\in\mathbb N$, with coordinates $(\vec x,\vec y,z)$, $\vec x =(x_1,\dots,x_n)$ and $\vec y=(y_1,\cdots, y_n)$, we consider the following vector fields for $j=1,\ldots, n$:
$$
\begin{array}{ll}
	X'_j&= \partial_{x_j} - K_1^j (x_j,y_j) \partial_z\,,\\
	X''_j& = \partial_{y_j} - K_2^j (x_j,y_j) \partial_z\,.
\end{array}
$$
We assume that ${\rm curl \,} K^j >0$ for $j=1,\ldots, n$.\\
For the nilpotentization at $(\vec x,\vec y,z)$ this leads to the following vector fields on $\mathbb H_n$,
$$
\begin{array}{ll}
	\hat X'_j & = {\rm curl} \,K^j(x_j,y_j)^{1/2} (\partial _{u_j} -\frac 12 v_j \partial_z )\,,\\
	\hat X''_j& = {\rm curl} \,K^j(x_j,y_j)^{1/2} (\partial _{v_j} +\frac 12 u_j \partial_z)\,.
\end{array}
$$
together with the Haar measure $(\prod_{j=1}^n {\rm curl} \, K^j(x_j,y_j))^{-1} \,du dv dz\,.$\\
For the Faber--Krahn part, one has to consider on $\mathbb H_n$
$$
\widehat \Delta_{\vec x,\vec  y}= \sum_j  {\rm curl} \, K^j(x_j,y_j) \Big((\partial _{u_j} -\frac{ v_j}{2}  \partial_z )^2+(\partial _{v_j} +\frac {u_j}{2}\partial_z )^2 \Big)\,.
$$

When ${\rm curl\,} K^j \equiv 1$, it will be clear from our analysis in Part \ref{part2} (Theorem \ref{maincomp}) that Pleijel's theorem holds for $n\geq 4$ in any open set of $\Omega$. This property remains true for general $K^j$ by a somewhat similar analysis as in the case $n=1$.

\newpage 

\part{Pleijel's bound for $\mathbb H_n\times\R^k$}\label{part2}

\section{The Pleijel argument for $\mathbb H_n\times\R^k$ }\label{sec:mainhnrk}

We work on $\mathbb H_n\times\R^k$, where $n\in\N$, $k\in\N_0$. The case $k=0$ corresponds to the Heisenberg group $\mathbb H_n$. (Everything remains valid for $n=0$ as well, that is, for $\R^k$, but in this case the results below are well known.) Typically, we will denote coordinates in $\mathbb H_n$ by $(x,y,z)$ with $x,y\in\R^n$ and $z\in\R$, and we will denote coordinates in $\R^k$ by $w$. The measure $dx\,dy\,dz\,dw$ is the Lebesgue measure on $\R^{2n+1+k}$. For the vector fields we will use the following normalization\footnote{In this and the remaining sections it is more convenient to use another normalization than in the first sections. This simply corresponds to a scaling of the $z$-variable.},
\begin{subequations}\label{Otherformulation}
\begin{equation}
X_j = \partial_{x_j} + 2y_j \partial_z \,,\qquad
Y_j = \partial_{y_j} - 2x_j \partial_z \,,\qquad
W_j = \partial_{w_j} \,.
\end{equation}
The sub-Laplacian is
\begin{equation}
\Delta^{\mathbb H_n\times\R^k} = \sum_{j=1}^n (X_j^2 + Y_j^2) + \sum_{i=1}^k W_i^2 \,.
\end{equation}
\end{subequations}

If $\Omega\subset\mathbb H_n\times\R^k$ is an open set of finite measure, then the spectrum of the Dirichlet realization of $ -\Delta^{\mathbb H_n\times\R^k}_\Omega $ is discrete and we can denote its eigenvalues, in nondecreasing order and repeated according to multiplicities, by $\lambda_\ell(\Omega)$, $\ell\in\N$. We know that eigenfunctions are  $C^\infty$ in $\Omega$ (\cite{Ho}) and therefore the nodal domains are well defined as the connected components of the complement of their zero set in $\Omega$. We denote by $\nu_\ell(\Omega)$ the maximal number of nodal domains of eigenfunctions corresponding to eigenvalue $\lambda_\ell(\Omega)$. We are interested in an upper bound on
$$
\limsup_{\ell\to\infty} \frac{\nu_\ell(\Omega)}{\ell}
$$
that depends only on $n$ and $k$. The simplest such upper bound is given by one, as mentioned in Remark \ref{remcourantasymp}.

In the spirit of Pleijel's theorem, here we try to improve upon the upper bound by one. Just as Pleijel's bound, our bound depends on two constants, namely the constant in the Weyl asymptotics and the Faber--Krahn constant. Let us introduce these constants. The Weyl asymptotics in the case with boundary (which was also established by G.~M\'etivier in \cite[Theorem 1.3]{Me}) states
that, for any open set $\Omega\subset\mathbb H_n\times\R^k$ of finite measure,
\begin{equation}\label{Metwithbdry}
\lambda^{-\frac{2n+2+k}{2}} \#\{ \ell:\ \lambda_\ell(\Omega)<\lambda \} \to \mathcal W(\mathbb H_n\times\R^k) \, |\Omega| 
\qquad\text{as}\ \lambda\to\infty \,.
\end{equation}
 We will give a (relatively) explicit expression for the constant $\mathcal W(\mathbb H_n\times\R^k)$ in the next section.

The Faber--Krahn constant $C^{\rm FK}(\mathbb H_n\times\R^k)$ is defined to be the largest constant such that for any open $\Omega\subset\mathbb H_n\times\R^k$ of finite measure and for any $u\in S^1_0(\Omega)$ one has
\begin{align}
	\label{eq:fk}
	& \int_\Omega (\sum_{j=1}^n ((X_ju)^2 + (Y_ju)^2) + \sum_{i=1}^k (W_iu)^2)\,dx\,dy\,dz\,dw \notag \\
	& \quad \geq C^{\rm FK}(\mathbb H_n\times\R^k) |\Omega|^{-\frac{2}{2n+2+k}} \int_\Omega u^2 \,dx\,dy\,dz\,dw \,.
\end{align}
Here $S^1_0(\Omega)$ (see Subsection \ref{ss2.2})  denotes the form domain of the Dirichlet realization of $-\Delta$ on $\Omega$ or, equivalently, the completion of $C^1_c(\Omega)$ with respect to the quadratic form $$u \mapsto \int_\Omega (\sum_{j=1}^n ((X_ju)^2 + (Y_ju)^2) + \sum_{i=1}^k (W_iu)^2 + u^2)\,dx\,dy\,dz\,dw\,.$$The defining inequality for the Faber--Krahn constant can also be stated as
$$
\lambda_1(\Omega) \geq C^{\rm FK}(\mathbb H_n\times\R^k) \, |\Omega|^{-\frac{2}{2n+2+k}}
$$
for all open $\Omega\subset\mathbb H_n\times\R^k$ of finite measure.

Let us set
\begin{equation}\label{eq:defgamma}
\gamma(\mathbb H_n\times\R^k) := \left( C^{\rm FK}(\mathbb H_n\times\R^k) \right)^{-\frac{2n+2+k}{2}} \left( \mathcal W(\mathbb H_n\times\R^k) \right)^{-1} \,.
\end{equation}
Here is our Pleijel-type bound.

\begin{theorem}\label{pleijelnk}
	For any open $\Omega\subset\mathbb H_n\times\R^k$ of finite measure,
	$$
	\limsup_{\ell\to\infty} \frac{\nu_\ell(\Omega)}{\ell} \leq \gamma(\mathbb H_n\times\R^k) \,.
	$$
\end{theorem}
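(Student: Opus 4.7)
The plan is to execute the classical Pleijel argument directly on the group $\mathbb H_n \times \R^k$, exploiting the fact that on this stratified group both of Pleijel's ingredients are already stated with uniform, translation-invariant constants: the Faber--Krahn inequality \eqref{eq:fk} and the Weyl asymptotics \eqref{Metwithbdry}. Consequently, there is no need for a localization/nilpotentization step like the one carried out in Proposition~\ref{faberkrahnbm}; the argument reduces to a short computation combining these two inputs with the restriction property of Theorem~\ref{restriction}.

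First I would fix an $L^2$-normalized eigenfunction $u_\ell$ associated with $\lambda_\ell(\Omega)$. By H\"ormander's hypoellipticity, $u_\ell \in C^\infty(\Omega)$, so its nodal domains $D_1, \ldots, D_{\nu_\ell(\Omega)}$ are well defined as the connected components of $\{u_\ell \neq 0\} \cap \Omega$. By Theorem~\ref{restriction}, the extension by zero of $u_\ell|_{D_i}$ belongs to $S^1_0(D_i)$. Testing the weak eigenvalue equation against this restriction shows that its Rayleigh quotient equals $\lambda_\ell(\Omega)$, and then the uniform Faber--Krahn bound \eqref{eq:fk} applied on $D_i$ yields the volume lower bound
\begin{equation*}
|D_i| \geq \bigl( C^{\rm FK}(\mathbb H_n \times \R^k) / \lambda_\ell(\Omega) \bigr)^{(2n+2+k)/2}.
\end{equation*}

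Summing this estimate over $i = 1, \ldots, \nu_\ell(\Omega)$ and using the disjointness $\sum_i |D_i| \leq |\Omega|$, one gets the Courant-type bound
\begin{equation*}
\frac{\nu_\ell(\Omega)}{\ell} \leq \frac{|\Omega|}{\bigl(C^{\rm FK}(\mathbb H_n \times \R^k)\bigr)^{(2n+2+k)/2}} \cdot \frac{\lambda_\ell(\Omega)^{(2n+2+k)/2}}{\ell}.
\end{equation*}
The last step is to pass to the limit $\ell \to \infty$ using M\'etivier's Weyl formula \eqref{Metwithbdry}. By the standard monotonicity argument, the counting asymptotics translates into $\lambda_\ell(\Omega)^{(2n+2+k)/2}/\ell \to \bigl(\mathcal W(\mathbb H_n \times \R^k)\,|\Omega|\bigr)^{-1}$. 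Substituting into the previous inequality produces exactly $\limsup_\ell \nu_\ell(\Omega)/\ell \leq \gamma(\mathbb H_n \times \R^k)$, which is the claim.

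I do not expect any serious obstacle. The one point at which care is needed is the membership $u_\ell|_{D_i} \in S^1_0(D_i)$, since the nodal boundary $\partial D_i$ can be arbitrarily irregular; this is precisely the content of Theorem~\ref{restriction}, which was proved under minimal assumptions and in particular requires no noncharacteristic hypothesis on $\partial D_i$ or $\partial\Omega$. In contrast to the general equiregular setting of Theorem~\ref{t:manifold}, the global Faber--Krahn constant $C^{\rm FK}(\mathbb H_n \times \R^k)$ is available directly from the dilation and translation invariance of the group, so no small-volume refinement analogous to Proposition~\ref{faberkrahnbm} is needed.
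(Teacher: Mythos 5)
Your proposal is correct and follows essentially the same route as the paper: restrict the eigenfunction to each nodal domain via Theorem~\ref{restriction}, apply the translation- and dilation-invariant Faber--Krahn bound \eqref{eq:fk} to get a uniform lower bound on each $|D_i|$, sum using disjointness, and pass to the limit via the Weyl asymptotics \eqref{Metwithbdry}. The only cosmetic difference is that the paper phrases the per-domain step as $\lambda_\ell(\Omega)=\lambda_1(\omega_\alpha)$ before invoking Faber--Krahn, and then takes a supremum over eigenfunctions to pass from $\nu_\ell(u)$ to $\nu_\ell(\Omega)$, but these are the same estimates you wrote.
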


\begin{proof}[Proof of Theorem \ref{pleijelnk}]
	We consider an eigenfunction $u$ corresponding to the eigen\-value $\lambda_\ell(\Omega)$. Let $(\omega_\alpha)_\alpha$ be its nodal domains and let $\nu_\ell(u)$ be their number. (We will see shortly that this number is finite.) By Theorem \ref{restriction} with $M=\mathbb H_n\times\R^k$, 
 we know that $\lambda_\ell(\Omega) = \lambda_1(\omega_\alpha)$ and that $u|_{\omega_\alpha}$ is the ground state of the Dirichlet realization on $\omega_\alpha$. Thus,
	\begin{align*}
		\frac{\nu_\ell(u)}{\ell} & \!=\! \frac{\lambda_\ell(\Omega)^\frac{2n+2+k}{2}}{\ell} \sum_\alpha \lambda_1(\omega_\alpha)^{-\frac{2n+2+k}{2}} \leq \frac{\lambda_\ell(\Omega)^\frac{2n+2+k}{2}}{\ell} \left( C^{\rm FK}(\mathbb H_n\times\R^k) \right)^{-\frac{2}{2n+2+k}} \sum_\alpha |\omega_\alpha| \\
		& \!\leq\! \frac{\lambda_\ell(\Omega)^\frac{2n+2+k}{2}}{\ell} \left( C^{\rm FK}(\mathbb H_n\times\R^k) \right)^{-\frac{2}{2n+2+k}} |\Omega| \,.
	\end{align*}
	Since this is true for any eigenfunction corresponding to $\lambda_\ell(\Omega)$, we deduce that
	$$
	\frac{\nu_\ell(\Omega)}{\ell} \leq \frac{\lambda_\ell(\Omega)^\frac{2n+2+k}{2}}{\ell} \left( C^{\rm FK}(\mathbb H_n\times\R^k) \right)^{-\frac{2}{2n+2+k}} |\Omega| \,.
	$$
	Taking the limsup as $\ell\to\infty$ and with in mind  the Weyl asymptotics, we arrive at the claimed bound.
\end{proof}

We recall that by Theorem \ref{courant} we have $\limsup_{\ell\to\infty} \frac{\nu_\ell(\Omega)}{\ell} \leq 1$; see also Remark \ref{remcourantasymp}. In the remaining sections of this paper we will give sufficient conditions on $n$ and $k$ to have $\gamma(\mathbb H_n\times\R^k)<1$. We recall that for $n=0$ this was shown to be the case for $k=2$ by Pleijel \cite{Pl} and for general $k$ by B\'erard and Meyer \cite{BerMe}. Moreover, Helffer and Persson Sundqvist \cite{HPS} showed that, for $n=0$, the sequence $k\mapsto\gamma(\R^k)$ is decreasing. Here we shall prove, among other things, the following.

\begin{theorem}\label{maincomp}
	Let $n\geq 1$ and $k\geq 0$ with $(n,k)\not\in\{(1,0),(2,0),(3,0),(1,1)\}$. Then $\gamma(\mathbb H_n\times\R^k)<1$.
\end{theorem}

The proof of Theorem \ref{maincomp} is somewhat long and spread out over several sections. Here is a guide. The part concerning $k=0$ is proved in Subsection \ref{sec:maincompk0} and that concerning $n\geq 3$ in Subsection \ref{sec:maincompk}. The part concerning $n=1$ and $n=2$ is proved in Subsection \ref{sec:maincompn12}.

There is a well known conjecture, due to Pansu \cite{Pa2}, about the sharp isoperimetric constant on the Heisenberg group. It is generally believed to be true and supported by several partial results. We will discuss this in some detail in Section \ref{sec:isoperimetric}. We shall show that the validity of this conjecture implies Pleijel's bound.

\begin{proposition}\label{pansupleijel}
	Let $n\in\N$ and assume that Pansu's conjecture \eqref{eq:pansusconj} holds. Then $\gamma(\mathbb H_n\times\R^k)<1$ for all $k\in\N_0$.
\end{proposition}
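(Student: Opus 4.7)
\textbf{Proof proposal for Proposition \ref{pansupleijel}.}
The plan is to convert Pansu's sharp isoperimetric constant on $\mathbb H_n$ into a sharp (or near-sharp) Faber--Krahn constant, extend it to $\mathbb H_n\times\R^k$, and then combine with the explicit Weyl constant $\mathcal W(\mathbb H_n\times\R^k)$ from Section \ref{sec:weylhnrk} to verify $\gamma(\mathbb H_n\times\R^k)<1$ for every $n\in\N$ and $k\in\N_0$.

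The first step is to deduce from Pansu's conjecture \eqref{eq:pansusconj} the sharp Faber--Krahn inequality on $\mathbb H_n$. This is done in the usual Pleijel--Faber--Krahn spirit via the sub-Riemannian coarea formula and a P\'olya--Szeg\H{o} rearrangement along the Pansu profile, as discussed in Section \ref{sec:isoperimetric}: given $u\in S^1_0(\Omega)$ on $\mathbb H_n$, the Pansu-symmetrand $u^\ast$ is supported in the Pansu ball $B^{\rm P}_{|\Omega|}$ of the same volume, has the same $L^2$-norm, and satisfies
\[
\int_{\mathbb H_n}\bigl(|X u|^2+|Y u|^2\bigr)\,dx\,dy\,dz \ \geq\ \int_{\mathbb H_n}\bigl(|X u^\ast|^2+|Y u^\ast|^2\bigr)\,dx\,dy\,dz.
\]
Scaling then gives $C^{\rm FK}(\mathbb H_n)=\lambda_1^{\mathbb H_n}(B^{\rm P}_1)$, with an explicit numerical value determined by Pansu's constant.

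The second step is to pass to $\mathbb H_n\times\R^k$. The most natural route is a two-stage symmetrization: for $u\in S^1_0(\Omega)$ on $\mathbb H_n\times\R^k$, first perform Schwarz symmetrization in the $w$-variable on each $\mathbb H_n$-slice (which decreases the $W$-energy and preserves the horizontal energies), and then apply the Pansu rearrangement of Step 1 in the $(x,y,z)$-variables on each $\R^k$-slice (which decreases the horizontal energy at fixed $w$). One obtains an explicit lower bound on $C^{\rm FK}(\mathbb H_n\times\R^k)$ in terms of $C^{\rm FK}(\mathbb H_n)$ and the Euclidean Faber--Krahn constant on $\R^k$, with equality case a ``product'' of a Pansu ball and a Euclidean ball.

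The final step is a numerical comparison: plugging the resulting Faber--Krahn bound together with the explicit formula for $\mathcal W(\mathbb H_n\times\R^k)$ into \eqref{eq:defgamma}, one verifies the strict inequality $\gamma(\mathbb H_n\times\R^k)<1$ case by case, using $n\geq 1$ (so $Q=2n+2+k\geq 4$) to ensure the power $-Q/2$ in the Weyl constant is sufficiently negative. The main obstacle is Step 2, where one has to be careful with the P\'olya--Szeg\H{o} inequality on the mixed space $\mathbb H_n\times\R^k$: the Heisenberg rearrangement performed slice-by-slice need not interact well with the global Sobolev space, and one must verify either that the resulting symmetrand still lies in $S^1_0$ of the rearranged set, or bypass this by working directly from a product isoperimetric inequality derived from Pansu and the Brunn--Minkowski/Euclidean isoperimetric inequalities on $\R^k$. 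Once this is in place, the final numerical comparison reduces to monotonicity properties of the Weyl constant in $k$ and the fact that Pansu's constant already beats the Weyl constant in the pure Heisenberg case, so adding Euclidean factors can only help.
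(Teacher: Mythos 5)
Your overall strategy --- Pansu isoperimetric bound $\to$ Faber--Krahn on $\mathbb H_n\times\R^k$ $\to$ Pleijel constant --- matches the paper's, but there are two genuine gaps.

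First, the claim in Step~1 that Pansu rearrangement gives a P\'olya--Szeg\H{o} inequality on $\mathbb H_n$, and hence $C^{\rm FK}(\mathbb H_n)=\lambda_1^{\mathbb H_n}(B_1^{\rm P})$, is unjustified and almost certainly false. The Euclidean P\'olya--Szeg\H{o} proof hinges on the fact that for a radially decreasing rearrangement, $|\nabla u^\ast|$ is constant on each sphere, so the Cauchy--Schwarz step in the coarea argument is saturated. On $\mathbb H_n$, the horizontal gradient of the Kor\'anyi norm (hence of any function of the Kor\'anyi distance) is \emph{not} constant on Pansu spheres; it vanishes along the $z$-axis. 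Consequently, even granting Pansu's conjecture, the Pansu ball is not known to be (and is not expected to be) the Faber--Krahn minimizer. The paper is careful about exactly this: Proposition~\ref{symmetrization} converts the isoperimetric inequality into a Faber--Krahn bound by passing to a one-dimensional radial profile $u_*$ and invoking the Bessel eigenvalue $j_{(Q-2)/2,1}^2$ --- this yields only the \emph{lower} bound $C^{\rm FK}(\mathbb H_n)\geq I(\mathbb H_n)^2\,Q^{-2}j_{(Q-2)/2,1}^2$, with the isoperimetric constant appearing as a prefactor rather than being absorbed into an equality. Your Step~2 inherits the same difficulty, which you partly acknowledge, but the proposed ``slice-by-slice symmetrization'' would again need a P\'olya--Szeg\H{o}-type statement on each Heisenberg slice; the paper instead proves Theorem~\ref{isoperk} (a purely isoperimetric statement) by a one-dimensional BV-functional argument (Lemma~\ref{isocartesian}) applied in the Euclidean direction, bypassing rearrangement entirely.

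Second, the final step ``plug in and verify case by case'' does not close the argument, since there are infinitely many pairs $(n,k)$. The paper reduces the $k$-dependence to the monotone factor $\gamma(\R^{2n+2+k})$ (so it suffices to check $k=0$, via Lemma~\ref{hps}), and then handles all $n$ by showing the relevant ratio $\frac{\gamma(\R^{2m+2})}{\gamma(\R^{2m})}\cdot\frac{\alpha_m}{\alpha_{m-1}}\cdot\frac{\tilde\gamma_m}{\tilde\gamma_{m-1}}$ is $<1$: analytically for $m\geq 34$ (using \eqref{eq:quotientgammatilde}, \eqref{eq:quotientalpha}, \eqref{eq:quotientgammarn}, and the Bessel-zero bound of Lemma~\ref{besselzero}), numerically for $2\leq m\leq 33$, and by a direct computation at $m=1$. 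Without some such monotonicity mechanism, ``numerical comparison'' cannot cover the unbounded range of $n$ and $k$.
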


We will prove this proposition for $k=0$ and $n=1,2,3$ in Corollary \ref{pansupleijelk0} and for $k=1$ and $n=1$ in Subsection \ref{sec:maincompn12}. In the remaining cases Theorem \ref{maincomp} applies.

\medskip

\section{Computing the constant in the Weyl asymptotics}\label{sec:weylhnrk}

\medskip

\subsection{The case of $\mathbb H_n$}\label{sec:weylheis1}~\\
As discussed in Theorem \ref{weylmet} and \eqref{Metwithbdry} for the case with boundary, the Weyl asymptotics for the Dirichlet realization of the sub-Laplacian in open subsets $\Omega$ of $\mathbb H_n$ state that
$$
\lambda^{-\frac{2n+2}{2}} \#\{ \ell:\ \lambda_\ell(\Omega)<\lambda \} \to  \mathcal W(\mathbb H_n) \ |\Omega| 
\qquad\text{as}\ \lambda\to\infty \,.
$$
Hansson--Laptev \cite{HL} have shown that these asymptotics hold under the sole assumption that $\Omega$ is an open set of finite measure.

Since we are interested in a relatively explicit expression of the constant $\mathcal W(\mathbb H_n)$ and since Hansson and Laptev use a different normalization from ours, we repeat part of their argument. They show that
\begin{equation}\label{w1}
	N(\lambda,-\Delta_\Omega^{\mathbb H_n}) \sim \int_\Omega \1(-\Delta^{\mathbb H_n} <\lambda)((x,y,z),(x,y,z)) \,dx\,dy\,dz 
	\qquad\text{as}\ \lambda\to\infty \,,
\end{equation}
where $\1(-\Delta^{\mathbb H_n} <\lambda)((x,y,z),(x,y,z))$ is the on-diagonal spectral density of the sub-Laplacian on all of $\mathbb H_n$. (To be more precise, using coherent states Hansson and Laptev show \eqref{w1} when integrated over $\lambda$. Then a  Tauberian theorem yields \eqref{w1} as stated.)

By translation invariance of the sub-Laplacian on $\mathbb H^n$, we know that the diagonal of the spectral density $\1(-\Delta^{\mathbb H_n} <\lambda)((x,y,z),(x,y,z))$ is independent of the point $(x,y,z)$. Moreover, by dilation covariance, we know that it is proportional to $\lambda^\frac{Q}{2}$. Thus, there is a constant $\mathcal W(\mathbb H_n)>0$ such that 
\begin{equation}\label{w2}
	\1(-\Delta^{\mathbb H_n} <\lambda)((x,y,z),(x,y,z)) = \mathcal W(\mathbb H_n)\ \lambda^\frac Q2 \,,
\end{equation}
and we obtain the above form of the spectral asymptotics.

In the following we are interested in finding an explicit expression for $\mathcal W(\mathbb H_n)$. We proceed by an explicit diagonalization of the operator $-\Delta^{\mathbb H_n}$. By a Fourier transform with respect to $z$ one arrives at the family of operators
$$
- \sum_{j=1}^n \left( (\partial_{x_j} + 2iy_j  \zeta)^2 + (\partial_{y_j} - 2i x_j  \zeta)^2 \right),
$$
where $\zeta\in\R$ is the Fourier variable dual to the variable $z$. 

Now for each fixed $j$,
$$
- \left( (\partial_{x_j} + 2i y_j  \zeta)^2 + (\partial_{y_j} - 2ix_j \zeta)^2 \right),
$$
is a Landau Hamiltonian corresponding to constant magnetic field with intensity $4|\zeta|$. These $n$ Hamiltonians are independent of each other. The spectrum of each one is given by $4|\zeta|(2k_j +1)$, $k_j\in\N_0 =\{0,1,2,\ldots\}$. 

The spectral function of the operator $-\Delta$ on $\mathbb H_n$ is then given by
\begin{align*}
	\1(-\Delta<\lambda)((x,y,z),(x,y,z)) & = \int_\R \frac{d\zeta}{2\pi} \left( \frac{4|\zeta|}{2\pi} \right)^n \sum_{k \in\N_0^n} \1(4|\zeta|(2(k_1+\ldots+k_n)+n)<\lambda) \\
	& = \frac{4^n}{(2\pi)^{n+1}} \sum_{k\in\N_0^n} \frac{2}{n+1} \left( \frac{\lambda}{4(2(k_1+\ldots+k_n)+n)} \right)^{n+1} \\
	& = \frac{1}{2(n+1)} \frac{\lambda^{n+1}}{(2\pi)^{n+1}} \sum_{k\in\N_0^n} \left( \frac{1}{2(k_1+\ldots+k_n)+n} \right)^{n+1} \\
	& = \frac{1}{2(n+1)} \frac{\lambda^{n+1}}{(2\pi)^{n+1}} \sum_{m\in\N_0} \binom{m+n-1}{m} \left( \frac{1}{2m+n} \right)^{n+1}
	\,.
\end{align*}
Thus, we have shown that
\begin{equation}\label{w4}
	\mathcal W(\mathbb H_n) = \frac{1}{2(n+1)} \frac{1}{(2\pi)^{n+1}} \sum_{m\in\N_0} \binom{m+n-1}{m} \frac{1}{(2m+n)^{n+1}} \,.
\end{equation}
Note that 
\begin{equation}\label{eq:W1}
	\mathcal W(\mathbb H) = \frac{1}{4} \frac{1}{(2\pi)^2} \sum_{m\in\N_0} \frac{1}{(2m+1)^2} = \frac14 \frac1{(2\pi)^2} \frac{\pi^2}8 = \frac1{128}
\end{equation}
and
\begin{equation}\label{eq:W2}
	\mathcal W(\mathbb H_2) = \frac{1}{6} \frac{1}{(2\pi)^3} \sum_{m\in\N_0} \frac{m+1}{(2m+2)^3} = \frac{1}{6} \frac{1}{(2\pi)^3} \frac{\pi^2}{48} = \frac{1}{48^2 \pi} \,.
\end{equation}
It seems like Hansson--Laptev \cite{HL} were not aware of a more explicit form of $\mathcal W(\mathbb H_n)$ for $n\geq 3$. We will give explicit formulas below for $3\leq n \leq 13$ and propose a general conjecture.

Let us set
\begin{equation}\label{eq:defcn}
	c_n := \sum_{m\in\N_0} \binom{m+n-1}{m} \frac{1}{(2m+n)^{n+1}} \,.
\end{equation}
We have seen above that 
$$
c_1=\frac{\pi^2}8 \qquad \text{and}\qquad
c_2 = \frac{\pi^2}{48}\,.
$$
Numerical values for the $c_n$, $n=3,4,5,6$, are given in the Hansson--Laptev paper \cite{HL} and can be completed by using Wolfram Alpha or  Mathematica\footnote{Thanks to J. Viola and F. Nicoleau for their help.}.
This program shows first  that explicit formulas can be found for $c_n$ and then gives numerical values, which are consequently quite accurate. We get
$$
\begin{tabular}{lllll}
	$c_3 $& $= $& $ \pi^2(12-\pi^2)/768$ &$\approx $&$ 2.7378\;\cdot \; 10^{-2}$\\
	$c_4$ & $=$ &$ \pi^2 (15-\pi^2) / 17280$& $\approx$&$ 2.9303 \cdot \; 10^{-3}$\\
	$c_5$&$ = $&$ \pi^2 (120-100\pi^2 + 9 \pi^4) / 368640$&$ \approx $&$ 2.6027\cdot \; 10^{-4}$\\
	$c_6$&$=$&$ \pi^2(315-105 \pi^2+ 8 \pi^4) / 29030400$&$ \approx $&$ 1.9706 \cdot \; 10^{-5}$\\
	$c_7$&$=$&$\pi^2 (6720 - 19600 \pi^2 + 14504 \pi^4 - 1275 \pi^6) /2477260800$&$\approx$&$ 1.2988\cdot \;10^{-6}$\\
	$c_8 $&$= $&$\pi^2 (1575 - 1470 \pi^2 + 490 \pi ^4 - 36 \pi ^6) /24385536000$&$\approx$&$  7.5736\cdot\;10^{-8}$\\
	$c_9$&$=$&$\pi^2(40 320 - 282 240 \pi^2 + 663 264 \pi^4 - 439 144 \pi^6 + 37975 \pi^8)/ 3329438515200$&$ \approx$&$ 3.9589\cdot \; 10^{-9}$\\
	$c_{10}$&$=$&$ \pi^2 (3465 - 6930 \pi^2 + 6006 \pi ^4 - 1804 \pi^6 + 128 \pi ^8)/15450675609600$&$ \approx $&$1.8749\;\cdot\;10^{-10}$
\end{tabular}
$$
Continuing with Mathematica, we get for  the quotients:
\smaller[4]
\begin{align*}
	\frac{c_{11}}{c_{10}} & = \frac{3 \left(1774080 - 24393600 \pi ^2 + 129773952 \pi ^4 - 258523760 \pi ^6 + 160227716 \pi ^8 - 13712895 \pi ^{10}\right)}{10240
		\left(3465-6930 \pi ^2+6006 \pi ^4-1804 \pi ^6+128 \pi ^8\right)} \,, \\
	\frac{c_{12}}{c_{11}} & = \frac{256 \left(2837835-10405395 \pi ^2 + 18432414 \pi ^4 - 13774761 \pi ^6 + 3835832 \pi ^8 - 265344 \pi ^{10}\right)}{27027
		\left(1774080 - 24393600 \pi ^2 + 129773952 \pi ^4 - 258523760 \pi ^6 + 160227716 \pi ^8 - 13712895 \pi ^{10}\right)} \,,\\
	\frac{c_{13}}{c_{12}} & = \frac{7 \left(2075673600-49470220800 \pi ^2+497175719040 \pi ^4-2161554183360 \pi ^6+3895229400920 \pi ^8-2314322017956 \pi ^{10}+196697984175
		\pi ^{12}\right)}{40960 \left(2837835 - 10405395 \pi ^2 + 18432414 \pi ^4 - 13774761 \pi ^6 + 3835832 \pi ^8 - 265344 \pi ^{10}\right)} \,.
\end{align*}
\normalsize
This leads to
\begin{equation*}
	c_{11}  \approx 8.1149\,\cdot\, 10^{-12} \,, \quad
	c_{12}  \approx 3.23369\,\cdot\, 10^{-13} \,,\quad
	c_{13}  \approx 1.1938 \,\cdot\, 10^{-14} \,.
\end{equation*}

Although not important for our applications, it is nice to see that this leads to the following guess:
\begin{quotation}
	For any $n$, there is a polynomial  $P_n$ of degree $[\frac {n+1} 2]$  with rational coefficients  such that
	$$
	c_n= P_n(\pi^2)\,,
	$$
	where $ [x]$ denotes the largest integer satisfying $[x]\leq x$.
\end{quotation}
This is related to formulas concerning multi-zeta functions, which are recognized by Mathematica and, as communicated to us by F. Nicoleau, to the Lerch function \cite[p.\ 32]{MOS} and its derivatives.


\medskip

\subsection{The Weyl constant on $\mathbb H_n\times\R^k$ with $k\geq 1$}~\\
We show that the constant in the Weyl formula on $\mathbb H_n\times\R^k$ can be expressed in terms of that on $\mathbb H_n$.

\begin{lemma}\label{wnk}
	For any $n,k\in\N$,
	\begin{equation}
		\label{eq:wnk}
		\mathcal W(\mathbb H_n\times\R^k) = \mathcal W(\mathbb H_n)\  (4\pi)^{-\frac{k}{2}} \frac{\Gamma(n+2)}{\Gamma(\frac{2n+k+4}{2})} \,.
	\end{equation}
\end{lemma}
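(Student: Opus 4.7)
My plan is to follow exactly the strategy used in the previous subsection to compute $\mathcal W(\mathbb H_n)$, namely to reduce the Weyl asymptotics to the computation of the on-diagonal spectral density of $-\Delta^{\mathbb H_n\times\R^k}$ on all of $\mathbb H_n\times\R^k$, via a coherent state/Tauberian argument as in \cite{HL}. By translation invariance and the obvious dilation covariance (with the effective weight $2n+2+k$), this density is of the form $\mathcal W(\mathbb H_n\times\R^k)\,\mu^{(2n+2+k)/2}$, independent of the base point, so the task is to identify the constant.

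The key observation is that the sub-Laplacian splits as
\begin{equation*}
-\Delta^{\mathbb H_n\times\R^k} = (-\Delta^{\mathbb H_n})\otimes I + I\otimes(-\Delta_{\R^k}),
\end{equation*}
where the two summands commute and act on independent sets of variables. Consequently, for any $\mu>0$ the joint spectral measure factorizes, and evaluating the indicator $\1(-\Delta^{\mathbb H_n\times\R^k}<\mu)$ on the diagonal gives the convolution of the two spectral density measures:
\begin{equation*}
\1(-\Delta^{\mathbb H_n\times\R^k}<\mu)((x,y,z,w),(x,y,z,w)) = \int_0^\mu dN_1(\lambda)\, dN_2(\mu-\lambda),
\end{equation*}
where $N_1(\lambda) = \mathcal W(\mathbb H_n)\,\lambda^{n+1}$ is the on-diagonal counting density on $\mathbb H_n$ (from the previous subsection) and $N_2(\lambda) = ((4\pi)^{k/2}\Gamma(k/2+1))^{-1}\,\lambda^{k/2}$ is the standard on-diagonal density of $-\Delta_{\R^k}$ (obtained directly by Fourier transform).

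Substituting and computing the resulting Beta-type integral,
\begin{equation*}
\int_0^\mu (n+1)\lambda^{n}\cdot \tfrac{k}{2}(\mu-\lambda)^{k/2-1}\,d\lambda = \mu^{n+1+k/2}\,\frac{\Gamma(n+2)\,\Gamma(k/2+1)}{\Gamma(n+2+k/2)},
\end{equation*}
and combining with the prefactors of $N_1$ and $N_2$, the factor $\Gamma(k/2+1)$ cancels and we obtain \eqref{eq:wnk}. The only nontrivial step is the reduction from the Weyl formula with Dirichlet boundary conditions to the on-diagonal spectral density on the full group; but this is precisely what is done in the $k=0$ case (and in \cite{HL}), and the same coherent-state argument applies here with the obvious modifications coming from the additional Euclidean factor.
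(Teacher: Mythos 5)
Your approach is correct and arrives at the right formula, but it takes a somewhat different route from the paper's own proof. The paper re-diagonalizes $-\Delta^{\mathbb H_n\times\R^k}$ explicitly (Fourier in $z$ and $w$, Landau levels in $(x,y)$) and computes the resulting integral directly, eventually reducing it to a Beta-type integral; you instead exploit the product structure $-\Delta^{\mathbb H_n\times\R^k}=(-\Delta^{\mathbb H_n})\otimes I+I\otimes(-\Delta_{\R^k})$ to express the on-diagonal spectral density as a Stieltjes convolution of the two known densities. Your version is more modular (it reuses $\mathcal W(\mathbb H_n)$ as a black box) and in fact is precisely the argument the paper uses in Section~\ref{s10} to derive the general formula \eqref{eq:weylconstproduct} for products $G_1\times G_2$ of stratified groups; so the two routes are closely related, and yours anticipates the generalization.

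One minor but genuine error in the displayed computation: the integrand $\tfrac{k}{2}(\mu-\lambda)^{k/2-1}$ is the \emph{density} $dN_2/d\lambda$, so the integral you wrote computes $N'(\mu)$ (the density of the convolution), not $N(\mu)$. As written, the left side equals
\begin{equation*}
\mu^{n+k/2}\,\frac{\Gamma(n+2)\,\Gamma(k/2+1)}{\Gamma(n+1+k/2)}\,,
\end{equation*}
not $\mu^{n+1+k/2}\Gamma(n+2)\Gamma(k/2+1)/\Gamma(n+2+k/2)$. What you want is the cumulative quantity $N(\mu)=\int_0^\mu N_2(\mu-\lambda)\,dN_1(\lambda)$, whose kernel is $(\mu-\lambda)^{k/2}$ rather than $(\mu-\lambda)^{k/2-1}$; then
\begin{equation*}
\int_0^\mu (n+1)\lambda^{n}\,(\mu-\lambda)^{k/2}\,d\lambda = \mu^{n+1+k/2}\,\frac{\Gamma(n+2)\,\Gamma(k/2+1)}{\Gamma(n+2+k/2)}\,,
\end{equation*}
which, after multiplying by the prefactors $\mathcal W(\mathbb H_n)$ and $((4\pi)^{k/2}\Gamma(k/2+1))^{-1}$ and canceling $\Gamma(k/2+1)$, gives \eqref{eq:wnk}. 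So the conclusion is right; just fix the exponent (or, equivalently, replace the left side by the double integral $\iint_{\lambda_1+\lambda_2<\mu} dN_1\,dN_2$ before integrating out one variable).
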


\begin{proof}
	By the same argument as in the previous subsection we know that $\mathcal W(\mathbb H_n\times\R^k)$ is given by
	\begin{equation*}
		\1\big(-\Delta_{\mathbb H_n\times \mathbb R^k}<\lambda\big)(x,y,t,w,x,y,t,w)=\mathcal W(\mathbb H_n\times\R^k)\ \lambda^{\frac{2n+2+k}{2}}\,.
	\end{equation*}
	Since
	\begin{align*}
		& \1\big(-\Delta_{\mathbb H_n\times \mathbb R^k}<\lambda\big)(x,y,t,w,x,y,t,w) \\
		& = \int_\R \frac{d\zeta}{2\pi} \int_{\R^k} \frac{d\tau}{(2\pi)^k} \left( \frac{4|\zeta|}{2\pi} \right)^n \sum_{k \in\N_0^n} \1(4|\zeta|(2(k_1+\ldots+k_n)+n)+ \tau^2<\lambda) \,,
	\end{align*}
	we find
	\begin{align*}\label{sp2}
		\mathcal W(\mathbb H_n\times\R^k) & = (2\pi)^{ - k} |\Sph^{k-1}| \Big(\int_0^1(1-\rho^2)^{n+1} \rho^{k-1} d\rho\Big) \mathcal W(\mathbb H_n) \\
		& = (2\pi)^{ - k} |\Sph^{k-1}| \Big( 2^{-1} \int_0^1(1-\sigma)^{n+1} \sigma^{\frac{k-2}2} d\sigma \Big) \mathcal W(\mathbb H_n) \\
		& = (2\pi)^{ - k} |\Sph^{k-1}| 2^{-1} \frac{\Gamma(n+2)\,\Gamma(\frac k2)}{\Gamma(\frac{2n+4+k}{2})} \ \mathcal W(\mathbb H_n)
		\,.
	\end{align*}
	Here we expressed the beta function integral appearing  on the second line just above in terms of gamma functions. Inserting $|\Sph^{k-1}| = \Gamma(\frac k2)^{-1} 2 \pi^\frac k2$, we arrive at the claimed formula.
\end{proof}


\section{A first bound on the Faber--Krahn constant}\label{sec:fksob}

\smallskip

\subsection{A bound via the Sobolev constant}~\\
We obtain a bound on the Faber--Krahn constant in terms of the (critical) Sobolev inequality on $\mathbb H_n\times\R^k$. By definition, $C^{\rm Sob}(\mathbb H_n\times\R^k)$ is the largest constant such that for all $u\in S^1_0(\mathbb H_n\times\R^k)$
\begin{align*}
	& \int_{\mathbb H_n\times\R^k} \left( \sum_{j=1}^n ((X_ju)^2 + (Y_ju)^2) + \sum_{i=1}^k (W_iu)^2\right) dx\,dy\,dz\,dw \\
	& \quad \geq C^{\rm Sob}(\mathbb H_n\times\R^k) \left( \int_{\mathbb H_n\times\R^k} |u|^\frac{2(2n+2+k)}{2n+k} \,dx\,dy\,dz\,dw \right)^{\frac{2n+k}{2n+2+k}}.
\end{align*}

\begin{lemma}\label{fksob}
	$
	C^{\rm FK}(\mathbb H_n\times\R^k) \geq C^{\rm Sob}(\mathbb H_n\times\R^k).
	$
\end{lemma}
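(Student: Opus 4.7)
The plan is to derive the Faber--Krahn inequality from the Sobolev inequality by combining it with H\"older's inequality, in the classical manner that goes back to the Euclidean case. Set $Q := 2n+2+k$ for the homogeneous dimension, so that the Sobolev exponent is $2Q/(Q-2) = 2(2n+2+k)/(2n+k)$.

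First I would fix an open set $\Omega \subset \mathbb H_n\times\R^k$ of finite measure and a function $u \in S^1_0(\Omega)$. Extending $u$ by zero outside of $\Omega$ produces an element of $S^1_0(\mathbb H_n\times\R^k)$, since $C^1_c(\Omega) \subset C^1_c(\mathbb H_n\times\R^k)$ and both $S^1_0$ spaces are defined as closures of these test function spaces. Consequently, the defining Sobolev inequality applies to this extension and yields a lower bound on the sub-Laplacian quadratic form $\int_\Omega \bigl(\sum_j ((X_ju)^2 + (Y_ju)^2) + \sum_i (W_iu)^2\bigr)$ in terms of $C^{\rm Sob}(\mathbb H_n\times\R^k)\, \|u\|_{L^{2Q/(Q-2)}(\Omega)}^2$.

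Next I would apply H\"older's inequality with conjugate exponents $Q/(Q-2)$ and $Q/2$ to the decomposition $u^2 = u^2 \cdot \mathbf{1}_\Omega$ on $\Omega$, which gives
$$
\int_\Omega u^2\,dx\,dy\,dz\,dw \leq \Bigl( \int_\Omega |u|^{\frac{2Q}{Q-2}}\,dx\,dy\,dz\,dw \Bigr)^{\frac{Q-2}{Q}} \, |\Omega|^{\frac{2}{Q}}.
$$
Rearranging to express the Sobolev norm as a lower bound for $|\Omega|^{-2/Q}\int_\Omega u^2$ and substituting into the Sobolev inequality yields exactly the Faber--Krahn bound \eqref{eq:fk} with constant $C^{\rm Sob}(\mathbb H_n\times\R^k)$. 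Since $\Omega$ and $u$ were arbitrary, the definition of $C^{\rm FK}(\mathbb H_n\times\R^k)$ as the best constant in \eqref{eq:fk} gives the claimed inequality.

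I do not expect any serious obstacle: this is the standard two-line argument that Sobolev implies Faber--Krahn, and it works identically in the sub-Riemannian setting because both inequalities are phrased in terms of the same quadratic form. The only subtle point worth a line of verification is that the extension by zero from $S^1_0(\Omega)$ to $S^1_0(\mathbb H_n\times\R^k)$ is well defined, but this follows at once from the characterization of these spaces as closures of compactly supported $C^1$ functions.
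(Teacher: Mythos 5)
Your argument is correct and is essentially identical to the paper's: both deduce Faber--Krahn from Sobolev by applying H\"older's inequality with conjugate exponents $Q/(Q-2)$ and $Q/2$ to $\int_\Omega u^2$. The only difference is that you make explicit the (routine) point that extension by zero maps $S^1_0(\Omega)$ into $S^1_0(\mathbb H_n\times\R^k)$, which the paper leaves tacit.
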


\begin{proof}
	If $\Omega\subset\mathbb H_n\times\R^k$ is open with finite measure and if $u\in S^1_0(\Omega)$, then, by H\"older,
	$$
	\int_\Omega u^2 \,dx\,dy\,dz\,dw \leq |\Omega|^{\frac{2}{2n+2+k}} \left( \int_{\mathbb H_n\times\R^k} |u|^\frac{2(2n+2+k)}{2n+k} \,dx\,dy\,dz\,dw \right)^{\frac{2n+k}{2n+2+k}}. 
	$$
	Bounding the right side by Sobolev, we obtain a Faber--Krahn-type inequality with constant $C^{\rm Sob}(\mathbb H_n\times\R^k)$. This implies the claimed bound for the optimal constants.
\end{proof}

An explicit expression for $C^{\rm Sob}(\mathbb H_n)$ was found by Jerison and Lee \cite{JL}; for an alternative proof see also \cite{FrLi}. We have
\begin{equation}\label{eq:jl}
	C^{\rm Sob}(\mathbb H_n) = \frac{4\pi n^2}{(2^{2n} n!)^\frac{1}{n+1}} \,.
\end{equation}


\medskip

\subsection{The Pleijel constant $\gamma(\mathbb H_n)$}\label{sec:maincompk0}~\\
Our goal in this subsection is to prove the part of Theorem \ref{maincomp} for $k=0$, that is, we are going to prove that $\gamma(\mathbb H_n)<1$ for $n\geq 4$. To bound $\gamma(\mathbb H_n)$, defined in \eqref{eq:defgamma}, we use Lemma \ref{fksob} and the explicit expression for $C^{\rm Sob}(\mathbb H_n)$ from \eqref{eq:jl} to get
\begin{equation}
	\label{eq:gammatilden}
	\gamma(\mathbb H_n) \leq \left( C^{\rm Sob}(\mathbb H_n) \right)^{-n-1} \mathcal W(\mathbb H_n)^{-1} = \frac{2^n(n+1)!}{n^{2(n+1)}} \frac{1}{c_n} =: \tilde\gamma_n \,,
\end{equation}
where $c_n$ is defined in \eqref{eq:defcn}. Inspired by \cite{HPS} we will consider the quotients $\tilde\gamma_n/\tilde\gamma_{n-1}$.

In view of \eqref{eq:gammatilden}, the part of Theorem \ref{maincomp} for $k=0$ is an immediate consequence of the following assertion.

\begin{proposition}\label{gammatildemono}
	The sequence $n\mapsto \tilde\gamma_n$ is decreasing for $n\geq 1$. Moreover, $\tilde\gamma_n<1$ for $n\geq 4$.
\end{proposition}

Our proof of this proposition relies on numerical computations for $n\leq 13$. Using the values of $c_n$ from the previous section, we get
$$
\begin{tabular}{lll}
	$\tilde\gamma_1$ & $\approx$ & $3.242$\\
	$\tilde\gamma_2$ & $\approx$ & $1.824$\\
	$\tilde\gamma_3$ & $\approx$ & $1.069$\\
	$\tilde\gamma_4$ &$\approx$ & $6.251 \cdot 10^{-1}$ \\ 
	$\tilde\gamma_5$ &$\approx$  & $3.628 \cdot 10^{-1}$ \\ 
	$\tilde\gamma_6$ & $\approx$ & $ 2.088 \cdot 10^{-1}$ \\ 
	$\tilde\gamma_7$ & $\approx$ & $1.195\cdot 10^{-1}$  \\ 
	$\tilde\gamma_8$ & $\approx$ & $ 6.808\cdot 10^{-2}$ \\ 
	$\tilde\gamma_9$ & $\approx$ & $  3.860\cdot10^{-2}$ \\ 
	$\tilde\gamma_{10}$ & $\approx$ & $2.180 \cdot 10^{-2}$  \\
	$\tilde\gamma_{11}$ &$\approx$ & $1.227\cdot 10^{-2}$ \\ 
	$\tilde\gamma_{12}$ &$\approx$ &$6.891\cdot 10^{-3} $ \\ 
	$\tilde\gamma_{13}$ & $\approx$ & $3.859\cdot 10^{-3}$  \end{tabular}
$$
It is also instructive to look at the quotients $\tilde\gamma_n/\tilde\gamma_{n-1}$, for which we get
$$
\begin{tabular}{lll}
	$\tilde\gamma_2/\tilde\gamma_1$ & $\approx$ & $1.777$ \\
	$\tilde\gamma_3/\tilde\gamma_2$ & $\approx$ & $0.5861$ \\
	$\tilde\gamma_4/\tilde\gamma_3$ &$\approx$ & $0.5848$ \\ $\tilde\gamma_5/\tilde\gamma_4$ & $\approx$  & $0.5804$ \\
	$\tilde\gamma_6/\tilde\gamma_5$ &$\approx$   & $0.5757$ \\
	$\tilde\gamma_7/\tilde\gamma_6$&$\approx$ & $0.5721 $\\
	$\tilde\gamma_8/\tilde\gamma_7$&$\approx$ &$0.5697$\\
	$\tilde\gamma_9/\tilde\gamma_8$&$\approx$ &$0.5670$\\ 
	$\tilde\gamma_{10}/\tilde\gamma_9$&$\approx$ & $0.5648$\\ 
	$\tilde\gamma_{11}/\tilde\gamma_{10}$&$\approx$ &0.5630\\
	$\tilde\gamma_{12}/\tilde\gamma_{11}$&$\approx$ &0.5614\\
	$\tilde\gamma_{13}/\tilde\gamma_{12}$&$\approx$ &0.5601
\end{tabular}
$$
These computations also suggest that the sequence $\tilde\gamma_n/\tilde\gamma_{n-1}$ is decreasing and convergent, although this remains unproved.

\begin{proof}
	We are going to show that $\frac{\tilde\gamma_n}{\tilde\gamma_{n-1}}<1$ for $n\geq 13$. Since the same holds for $n\leq 12$ by the above numerical computations, we will obtain the claimed monotonicity. This monotonicity, together with the numerical fact that $\tilde\gamma_4<1$, implies the corresponding inequality for all $n\geq 4$.
	
	To show that $\tilde\gamma_n/\tilde\gamma_{n-1} <1$ for $n\geq 13$, we note that
\begin{equation} \label{eq:b}
	\frac{\tilde\gamma_n}{\tilde\gamma_{n-1}} = \frac{2(n+1)}{n^2} \, (1-1/n)^{2n} \, \frac{c_{n-1}}{c_n} \,.
\end{equation}
We write the definition \eqref{eq:defcn} of $c_n$ in the form
$$
c_n=  \sum_{m\in\N_0} \frac{ (m+n-1)! }{(n-1)! m!}  \frac{1}{(2m+n)^{n+1}}\,.
$$
In view of \eqref{eq:b} we are mainly interested in a lower bound on the quotient $c_n/c_{n-1}$. Observing that
\begin{align*}
	\frac{ (m+n-1)! }{(n-1)! m!}  \frac{1}{(2m+n)^{n+1}} & = \frac{m+n-1}{(n-1)(2m+n)} (1- \frac{1}{2m+n})^n \\
	& \quad \times \frac{ (m+n-2)! }{(n-2)! m!}  \frac{1}{(2m+n-1)^{n}}\,,
\end{align*}
we get
\begin{equation}\label{eq:a}
	\frac{c_n}{c_{n-1}} \geq \frac{1}{n-1} \inf_m \frac{m+n-1}{2m+n} (1- \frac{1}{2m+n})^n \,.
\end{equation}
Hence we have to analyze $\inf_m \theta_n(m)$ with
\begin{equation}\label{eq:theta}
	\theta_n(m):= \frac{m+n-1}{2m+n} (1- \frac{1}{2m+n})^n\,.
\end{equation}

We need to carefully analyze the sequence $m \mapsto \theta_n(m)$. Before doing this, let us provide some heuristics coming from limiting regimes. As $m=0$, we have $\theta_n(0)=\frac{n-1}{n}(1- \frac{1}{n})^n$, which tends to $e^{-1}$ as $n\rightarrow +\infty$. As $m$ tends to $+\infty$, we have $\theta_n(m)\to\frac 12$, but we need uniform lower bounds with respect to $m$ and $n$.

Asymptotically as $n\rightarrow +\infty$, a lower bound  for $\theta_n(m)$ is given by considering the infimum of the function (think of the change of variable $y=m/n$)
$$
(0,+\infty) \ni y \mapsto \frac{1+y}{1+2y} e^{-1/(2y+1)}\,.
$$
After the change of variable $u=y+\frac 12$, we have to analyze
$$
(\frac 12,+\infty) \ni u \mapsto \frac{\frac12 +u}{2u} e^{-1/2u}= (\frac 12 + \frac{1}{4u}) e^{-1/2u}\,.
$$
This function is increasing and its minimum is at $u=\frac 12$ and equals $e^{-1}$. Hence we have
\begin{equation}
	\frac{m+n}{2m+n}e^{- \frac{n}{n+2m}} \geq e^{-1}\,.
\end{equation}
This leads to
\begin{equation}\label{eq:expdecaytildegammanupper}
	\limsup_{n\rightarrow +\infty} \frac{\tilde\gamma_n}{\tilde\gamma_{n-1}} \leq 2 e^{-1} \approx 0.735\;.
\end{equation}
This is closer to the guess.
For the lower bound and using an upper bound for $\limsup_{n\rightarrow +\infty} \theta_n(m)$, we get
$$
\liminf_{n\rightarrow +\infty} \frac{\tilde\gamma_n}{\tilde\gamma_{n-1}} \geq 4 e^{-2} \approx 0.541\;.
$$
This is quite close to the numerics.

After having discussed these heuristics, we will turn to the proof of rigorous bounds. Returning to  $\theta_n(m)$ in \eqref{eq:theta}, we write
$$
\theta_n(m)\geq  \frac{m+n}{2m+n} (1- \frac{1}{2m+n})^n - \frac 1n\,.
$$
Using 
$$
- \log (1-x) \leq x + \int_0^x \frac{t}{1-t} \, dt\,,\qquad x\in [0,1)\,,
$$
we get
$$
x\leq - \log (1-x) \leq x + \frac{1}{1-x} \frac{x^2}{2}
\,,\qquad x\in [0,1)\,.
$$
With $x=\frac{1}{2m+n}$, we obtain
$$
(1- \frac{1}{2m+n})^n= e^{n \log (1- \frac{1}{2m+n})}\geq e^{- \frac{n}{n+2m} } e^{ - \frac{1}{2 (n-1)}} \,.
$$
Coming back to  $\theta_n(m)$ and what we have done for the limsup
$$
\theta_n(m)\geq  e^{-1} e^{- \frac {1}{2(n-1)}} -\frac 1n\,.
$$
So we finally get
$$
c_n \geq \frac{1}{n-1} \, \Big( e^{-1} e^{- \frac 1{2(n-1)}} -\frac 1n\Big) c_{n-1}\,.
$$
Coming back to \eqref{eq:b}, 
we get
\begin{align*}
	\frac{\tilde\gamma_n}{\tilde\gamma_{n-1}}  
	& \leq \frac{2(n+1)(n-1)}{n^2}(1-1/n)^{2n} \Big( e^{-1} e^{- \frac {1}{2(n-1)}} -\frac 1n\Big)^{-1} \\
	& \leq 2 e^{-1} \, \frac{(n+1)(n-1)}{n^2}\Big(  e^{- \frac 1{2(n-1)}} -\frac en\Big)^{-1} \leq 2 e^{-1} \Big(  e^{- \frac {1}{2(n-1)}} -\frac en\Big)^{-1} \,.
\end{align*}
So finally, we have shown that
\begin{equation}\label{eq:quotientgammatilde}
	\frac{\tilde\gamma_n}{\tilde\gamma_{n-1}} \leq 2 e^{-1} \Big(  e^{- \frac {1}{2(n-1)}} -\frac en\Big)^{-1} \,.
\end{equation}
For $n=13$, we have
\begin{equation} \label{eq:9.9}
e^{- \frac 1{2(n-1)}} -\frac en \approx 0.75009 \,,
\end{equation}
and, consequently,
$$
\frac{\tilde\gamma_{13}}{\tilde\gamma_{12}} <1\,.
$$
Looking at the bound \eqref{eq:quotientgammatilde} and its monotonicity with respect to $n$, the bound $\frac{\tilde\gamma_n}{\tilde\gamma_{n-1}}<1$ holds for \emph{any} $n\geq 13$. This completes the proof of Proposition \ref{gammatildemono}.	
\end{proof}


\medskip

\subsection{A lower bound on the Sobolev constant on $\mathbb H_n\times\R^k$ for $k\geq 1$}~\\
In this subsection we prove a lower bound on the Sobolev constant $C^{\rm Sob}(\mathbb H_n\times\R^k)$ in terms of the Sobolev constant $C^{\rm Sob}(\mathbb H_n)$ and the constant appearing  in a certain Sobolev interpolation inequality on $\R^k$. Assume $2\leq q<\infty$ if $k\leq 2$ and $2\leq q\leq\frac{2k}{k-2}$ if $k>2$, and denote by $C^{\rm GN}_q(\R^k)$ the largest possible constant in the inequality, valid for $u\in H^1(\R^k)$,
\begin{subequations}\label{eq:interineq}\begin{equation}
		\left( \int_{\R^k} |\nabla u|^2\,dw \right)^\theta \left( \int_{\R^k} | u|^2\,dw \right)^{1-\theta}
		\geq C^{\rm GN}_q(\R^k) \left( \int_{\R^k} |u|^q \,dw \right)^\frac2q \,,
	\end{equation}
	where
	\begin{equation}\label{eq:interineqb}
		\theta = k \left( \frac12 - \frac1q \right).
	\end{equation}
	(The value of $\theta$ is determined by scaling.)
\end{subequations}
For $k=1$ the explicit value of the constant $C^{\rm GN}_q(\R)$ is known from a work of Nagy \cite{Na}. For $k\geq 2$ its explicit value is not known, but we will still be able to derive some results in Subsection \ref{sec:maincompk}.

\begin{proposition}\label{lifting}
	For all $n,k\in\N$, setting $Q=2n+2$ and $q=\frac{2(Q+k)}{Q+k-2}$,
	$$
	C^{\rm Sob}(\mathbb H_n\times\R^k) \geq C^{\rm GN}_{q}(\R^k) \ ( C^{\rm Sob}(\mathbb H_n))^{\frac{Q}{Q+k}} \ \frac{Q+k}{Q^\frac{Q}{Q+k} \, k^\frac{k}{Q+k}} \,.
	$$
\end{proposition}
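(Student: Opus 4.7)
The plan is to combine the GN inequality \eqref{eq:interineq} on $\R^k$, applied slice-wise in the $\mathbb H_n$-variables, with the Sobolev inequality on $\mathbb H_n$, applied to the auxiliary scalar function
\[
v(x,y,z) := \|u(x,y,z,\cdot)\|_{L^2(\R^k)} \,.
\]
Throughout I write $p_1 := 2Q/(Q-2)$ for the Sobolev exponent on $\mathbb H_n$ and set
\[
A := \int \sum_{j=1}^n ((X_j u)^2+(Y_j u)^2)\,dx\,dy\,dz\,dw \,,\qquad B := \int \sum_{i=1}^k (W_i u)^2\,dx\,dy\,dz\,dw \,.
\]
Since $1/q = 1/2 - 1/(Q+k)$, a direct calculation gives $\theta = k/(Q+k)$ as well as the key identities $q\theta/2 = k/(Q+k-2)$ and $q(1-\theta)/2 = Q/(Q+k-2)$.

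First, for a.e.\ $(x,y,z) \in \mathbb H_n$ the slice $u(x,y,z,\cdot)$ lies in $H^1(\R^k)$, so \eqref{eq:interineq} applies. Raising it to the $q/2$ power and integrating over $(x,y,z) \in \mathbb H_n$, followed by H\"older's inequality in $(x,y,z)$ with dual pair $\bigl((Q+k-2)/k,\ (Q+k-2)/(Q-2)\bigr)$, produces
\[
\|u\|_{L^q(\mathbb H_n\times\R^k)}^q \leq \bigl(C^{\rm GN}_q(\R^k)\bigr)^{-q/2}\, B^{\,k/(Q+k-2)} \, \|v\|_{L^{p_1}(\mathbb H_n)}^{\,2Q/(Q+k-2)} \,.
\]
The H\"older pair is admissible because $(Q+k-2)/k \geq 1$ is equivalent to $Q \geq 2$, which is automatic since $Q = 2n+2 \geq 4$.

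Second, I would apply the Sobolev inequality on $\mathbb H_n$ to $v$. The essential ingredient is the pointwise chain-rule estimate $|X_j v| \leq \|X_j u\|_{L^2_w}$ (and analogously for $Y_j$), obtained from the identity $v\, X_j v = \int u\, X_j u\,dw$ via the Cauchy--Schwarz inequality; this extends to the Sobolev setting via the truncation arguments underlying Lemma \ref{truncation}. Integrating over $\mathbb H_n$ gives $\int |\nabla_{\mathbb H_n} v|^2\,dx\,dy\,dz \leq A$, whence $\|v\|_{p_1}^2 \leq (C^{\rm Sob}(\mathbb H_n))^{-1} A$. Substituting and raising to the power $2/q = (Q+k-2)/(Q+k)$ yields
\[
\|u\|_q^2 \leq \bigl(C^{\rm GN}_q(\R^k)\bigr)^{-1}\bigl(C^{\rm Sob}(\mathbb H_n)\bigr)^{-Q/(Q+k)}\,A^{Q/(Q+k)}B^{k/(Q+k)} \,.
\]

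To finish, weighted AM--GM with weights $a := Q/(Q+k)$ and $b := k/(Q+k)$ gives
\[
A^{a} B^{b} \leq a^{a} b^{b}\,(A+B) = \frac{Q^{a} k^{b}}{Q+k}\,(A+B) \,,
\]
which, upon insertion into the previous display and comparison with the definition of $C^{\rm Sob}(\mathbb H_n\times\R^k)$, produces exactly the claimed lower bound. There is no genuine obstacle beyond bookkeeping: the value $\theta = k/(Q+k)$ is forced by the homogeneous dimension $Q+k$ of $\mathbb H_n \times \R^k$, and once this is observed all exponents line up automatically. The only non-routine point is the Sobolev chain-rule bound for $v$, which is standard.
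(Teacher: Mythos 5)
Your argument is correct and reaches the sharp constant, but it does take a genuinely different technical route from the paper's. The paper's proof applies the Sobolev inequality on $\mathbb H_n$ \emph{slice-wise} for each fixed $w\in\R^k$ (inside the $w$-integral that appears after their Fubini--H\"older step), and obtains the optimal weighting of the two terms by introducing a free parameter $c$ via Young's inequality $ab\leq \frac1p a^p + \frac1{p'}b^{p'}$ and then optimizing over $c$ at the end. Your proof instead introduces the auxiliary scalar function $v(\zeta)=\|u(\zeta,\cdot)\|_{L^2(\R^k)}$, applies the Sobolev inequality on $\mathbb H_n$ \emph{once} to $v$, and replaces the parameter optimization by a clean one-line weighted AM--GM. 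The ingredient your route needs that the paper's does not is the Kato-type chain rule $|X_j v|\leq\|X_j u\|_{L^2_w}$, which you correctly obtain from $vX_jv=\int u\,X_ju\,dw$ and Cauchy--Schwarz (with the usual regularization $v_\epsilon=\sqrt{v^2+\epsilon^2}$ to handle the zero set, justified by the truncation machinery of Lemma~\ref{truncation}). Both give exactly the stated constant because your AM--GM weights $a=Q/(Q+k)$, $b=k/(Q+k)$ coincide with the exponents forced by the paper's optimal choice of $c$. Your version is arguably a bit tidier since it avoids tracking a free parameter through the calculation; the paper's version avoids invoking the vector-valued Kato inequality. One minor presentational point: you should say explicitly that $u(\zeta,\cdot)\in H^1(\R^k)$ for a.e.\ $\zeta$ follows from Fubini for $u\in S^1_0(\mathbb H_n\times\R^k)$, which is what licenses the slice-wise application of \eqref{eq:interineq}.
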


The argument that follows is inspired by the Laptev--Weidl method of lifting in dimension \cite{LaWe} and similar to one used in \cite{FGL}.

\begin{proof}
We begin by applying the inequality
$$
ab \leq \frac1p a^p + \frac1{p'} b^{p'}
$$
with $p=\frac{Q+k}{k}$, so $p'=\frac{Q+k}{Q}$, and $a=\mu^{-\frac{k}{Q+k}} g^\frac{Q+k-2}{Q+k}$, $b= \mu^\frac{k}{Q+k} g^\frac{2}{Q+k}$. We get
$$
g \leq \frac{k}{Q+k} \mu^{-1} g^\frac{Q+k-2}{k} + \frac{Q}{Q+k} \mu^\frac kQ g^\frac{2}{Q} \,.
$$
We apply this with
$$
g = \int_{\R^k} |u(\zeta,w)|^q\,dw
\qquad\text{and}\qquad
\mu = c\left( \int_{\R^k} |u(\zeta,w)|^2\,dw \right)^{\frac{Q}{k}} \,,
$$
where $\zeta\in\mathbb H_n$ is fixed and $c>0$ is a parameter. We obtain
\begin{align*}
	\int_{\R^k} |u(\zeta,w)|^q\,dw & \leq c^{-1} \frac{k}{Q+k} \frac{\left( \int_{\R^k} |u(\zeta,w)|^q\,dw \right)^\frac{Q+k-2}{k}}{\left( \int_{\R^k} |u(\zeta,w)|^2\,dw \right)^\frac{Q}{k}} \\
	& \quad + c^\frac{k}{Q} \frac{Q}{Q+k} \left( \int_{\R^k} |u(\zeta,w)|^q\,dw \right)^\frac2Q \int_{\R^k} |u(\zeta,w)|^2\,dw \\
	& \leq c^{-1} \frac{k}{Q+k} (C^{\rm GN}_{q}(\R^k))^{-\frac{Q+k}{k}} \int_{\R^k} |\nabla_w u(\zeta,w)|^2\,dw \\
	& \quad
	+ c^\frac{k}{Q} \frac{Q}{Q+k} \left( \int_{\R^k} |u(\zeta,w)|^q\,dw \right)^\frac2Q \int_{\R^k} |u(\zeta,w)|^2\,dw \,.
\end{align*}
Here we used the Sobolev interpolation inequality on $\R^k$.

We integrate this inequality with respect to $\zeta$ and obtain
\begin{align*}
	\|u\|_q^q & \leq c^{-1} \frac{k}{Q+k} (C^{\rm GN}_{q}(\R^k))^{-\frac{Q+k}{k}} \| \nabla_w u\|_2^2 \\
	& \quad + c^\frac{k}{Q} \frac{Q}{Q+k}  \int_{\mathbb H_n} \left( \int_{\R^k} |u(\zeta,w)|^q\,dw \right)^\frac2Q \int_{\R^k} |u(\zeta,w)|^2\,dw\,d\zeta \,.
\end{align*}
Now by Fubini and H\"older
\begin{align*}
	& \int_{\mathbb H_n} \left( \int_{\R^k} |u(\zeta,w)|^q\,dw \right)^\frac2Q \left(\int_{\R^k} |u(\zeta,w)|^2\,dw\right) \,d\zeta \\
	& \quad \leq \| u \|_q^\frac{2q}{Q} \int_{\R^k} \left( \int_{\mathbb H_n} |u(\zeta,w)|^\frac{2Q}{Q-2} \,d\zeta \right)^\frac{Q-2}{Q}\,dw \,.
\end{align*}
Applying the Sobolev inequality on $\mathbb H_n$ on the right side and inserting the inequality in the above bound, we get
\begin{align*}
	\|u\|_q^q & \leq c^{-1} \frac{k}{Q+k} (C^{\rm GN}_{q}(\R^k))^{-\frac{Q+k}{k}} \| \nabla_w u\|_2^2 \\ 
	& \quad + c^\frac{k}{Q} \frac{Q}{Q+k} (C^{\rm Sob}(\mathbb H_n))^{-1} \| u \|_q^\frac{2q}{Q} \int_{\R^k} \sum_{j=1}^n \int_{\mathbb H_n} (|X_ju|^2 + |Y_ju|^2)\,d\zeta \,dw \,.
\end{align*}

We finally choose $c$ such that
$$
c^{-1} \frac{k}{Q+k} (C^{\rm GN}_{q}(\R^k))^{-\frac{Q+k}{k}} = c^\frac{k}{Q} \frac{Q}{Q+k} (C^{\rm Sob}(\mathbb H_n))^{-1} \| u \|_q^\frac{2q}{Q} \,,
$$
that is, we choose
$$
c = \left( \frac{k}{Q} \right)^\frac{Q}{Q+k} (C^{\rm GN}_{q}(\R^k))^{-\frac{Q}{k}} ( C^{\rm Sob}(\mathbb H_n))^{\frac{Q}{Q+k}} \| u \|_q^{-\frac{2q}{Q+k}} \,.
$$
In this way we get
$$
\begin{array}{l}
\| u \|_q^q \leq (C^{\rm GN}_{q}(\R^k))^{-1} ( C^{\rm Sob}(\mathbb H_n))^{-\frac{Q}{Q+k}} \| u \|_q^{\frac{2q}{Q+k}}
\frac{Q^\frac{Q}{Q+k} k^\frac{k}{Q+k}}{Q+k}\\
\qquad \qquad \left( \| \nabla_w u\|_2^2 
+ \int_{\R^k} \sum_{j=1}^n \int_{\mathbb H_n} (|X_ju|^2 + |Y_ju|^2)\,d\zeta \,dw \right) .
\end{array}
$$
This proves the assertion.
\end{proof}

From Proposition \ref{lifting} we obtain the following bound on the Pleijel constant. We recall that $\tilde\gamma_n$ was defined in \eqref{eq:gammatilden}.

\begin{corollary}\label{pleijellifting}
	For all $n,k\in\N$, setting $Q= 2n+2$ and $q=\frac{2(Q+k)}{Q+k-2}$,
	$$
	\gamma(\mathbb H_n\times\R^k) \leq 
	( C^{\rm GN}_{q}(\R^k) )^{-\frac{Q+k}{2}} \, \frac{Q^\frac{Q}{2} \, k^\frac{k}{2}}{(Q+k)^{\frac{Q+k}2}} \,
	(4\pi)^{\frac{k}{2}} \, \frac{\Gamma(\frac{Q+k+2}{2})}{\Gamma(\frac{Q+2}2)} \, \tilde\gamma_n \,.
	$$
\end{corollary}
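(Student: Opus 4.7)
\textbf{Proof plan for Corollary \ref{pleijellifting}.} The plan is to chain together the three ingredients that were set up immediately before the statement: the Faber--Krahn bound in terms of the Sobolev constant (Lemma \ref{fksob}), the Sobolev lifting bound (Proposition \ref{lifting}), and the explicit relation for the Weyl constant (Lemma \ref{wnk}), and then regroup the resulting expression into the form requested, using the definition \eqref{eq:gammatilden} of $\tilde\gamma_n$. Throughout I will write $Q=2n+2$ so that the exponent $n+1$ in \eqref{eq:gammatilden} becomes $Q/2$ and the critical Sobolev exponent on $\mathbb{H}_n\times\R^k$ coincides with $q=2(Q+k)/(Q+k-2)$.

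First I would insert $C^{\mathrm{FK}}(\mathbb{H}_n\times\R^k)\geq C^{\mathrm{Sob}}(\mathbb{H}_n\times\R^k)$ from Lemma \ref{fksob} into the definition \eqref{eq:defgamma} of $\gamma(\mathbb{H}_n\times\R^k)$, obtaining
\[
\gamma(\mathbb{H}_n\times\R^k) \leq \bigl(C^{\mathrm{Sob}}(\mathbb{H}_n\times\R^k)\bigr)^{-(Q+k)/2} \bigl(\mathcal{W}(\mathbb{H}_n\times\R^k)\bigr)^{-1}.
\]
Next I would raise the Proposition \ref{lifting} inequality to the power $(Q+k)/2$ and observe that
\[
\left(\frac{Q+k}{Q^{Q/(Q+k)}\,k^{k/(Q+k)}}\right)^{\!\!-(Q+k)/2}
=\frac{Q^{Q/2}\,k^{k/2}}{(Q+k)^{(Q+k)/2}},
\]
so that $(C^{\mathrm{Sob}}(\mathbb{H}_n\times\R^k))^{-(Q+k)/2}$ is bounded by
\[
(C^{\mathrm{GN}}_q(\R^k))^{-(Q+k)/2}\,(C^{\mathrm{Sob}}(\mathbb{H}_n))^{-Q/2}\,\frac{Q^{Q/2}\,k^{k/2}}{(Q+k)^{(Q+k)/2}}.
\]

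For the Weyl factor I would substitute Lemma \ref{wnk}, written as
\[
\mathcal{W}(\mathbb{H}_n\times\R^k)^{-1}=\mathcal{W}(\mathbb{H}_n)^{-1}\,(4\pi)^{k/2}\,\frac{\Gamma((Q+k+2)/2)}{\Gamma((Q+2)/2)},
\]
using $\Gamma(n+2)=\Gamma(Q/2+1)=\Gamma((Q+2)/2)$ and the analogous identification on the other side. Multiplying the two bounds and recognizing
\[
(C^{\mathrm{Sob}}(\mathbb{H}_n))^{-Q/2}\,\mathcal{W}(\mathbb{H}_n)^{-1}=\tilde\gamma_n
\]
from \eqref{eq:gammatilden} (since $Q/2=n+1$) collects all the $\mathbb{H}_n$-specific data into the single factor $\tilde\gamma_n$, while the remaining terms assemble precisely into the expression stated in the corollary.

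This is a pure bookkeeping computation — no new estimate is required beyond what Proposition \ref{lifting} already provides — so I expect no genuine obstacle; the only care needed is in tracking the exponents so that the Gamma-function ratio, the $(4\pi)^{k/2}$, the $Q^{Q/2}k^{k/2}/(Q+k)^{(Q+k)/2}$ geometric factor, and the $\tilde\gamma_n$ factor appear in the correct places. In particular I would double-check the identification $n+1=Q/2$ when applying \eqref{eq:gammatilden}, and verify once more that the exponent on $C^{\mathrm{GN}}_q(\R^k)$ coming from Proposition \ref{lifting} is indeed $(Q+k)/2$, matching the dimension-dependent power in $\gamma(\mathbb{H}_n\times\R^k)$.
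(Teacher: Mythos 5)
Your plan is correct and matches the paper's own proof step for step: Lemma \ref{fksob} to pass from Faber--Krahn to Sobolev, Proposition \ref{lifting} raised to the power $(Q+k)/2$, Lemma \ref{wnk} inverted for the Weyl factor, and the recognition of $\tilde\gamma_n$ via $n+1=Q/2$. The exponent bookkeeping you describe (including the factor $Q^{Q/2}k^{k/2}/(Q+k)^{(Q+k)/2}$ and the Gamma-function ratio) is exactly what the paper does.
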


\begin{proof}
	According to Lemma \ref{fksob}, Proposition \ref{lifting} and \eqref{eq:wnk} we have
	\begin{align*}
		\gamma(\mathbb H_n\times\R^k) & \leq \left( C^{\rm Sob}(\mathbb H_n\times\R^k) \right)^{-\frac{Q+k}{2}} \left( \mathcal W(\mathbb H_n \times\R^k) \right)^{-1} \\
		& \leq ( C^{\rm GN}_{q}(\R^k) )^{-\frac{Q+k}{2}} \ ( C^{\rm Sob}(\mathbb H_n))^{-\frac{Q}{2}} \, \frac{Q^\frac{Q}{2} \, k^\frac{k}{2}}{(Q+k)^{\frac{Q+k}2}} \,
		\mathcal W(\mathbb H_n)^{-1}  (4\pi)^{\frac{k}{2}} \frac{\Gamma(\frac{Q+k+2}{2})}{\Gamma(\frac{Q+2}2)} \,.
	\end{align*}
	This is the claimed inequality.
\end{proof}


\medskip

\subsection{The Pleijel constant $\gamma(\mathbb H_n\times\R^k)$ for $k\geq 1$} \label{sec:maincompk}~\\
Our goal in the present subsection is to prove the following result.

\begin{proposition}\label{largedimension}
	If $n\geq 3$ and $k\geq 1$ then $\gamma(\mathbb H_n\times\R^k)<1$.
\end{proposition}

Our proof will rely on the bound on $\gamma(\mathbb H_n\times\R^k)$ from Corollary \ref{pleijellifting}. In order to exploit this corollary we need a good lower bound on the constant $C^{\rm GN}_q(\R^k)$. As we already mentioned, an explicit expression for this constant is known in dimension $k=1$ due to the work of Nagy \cite{Na}. The idea behind the following result is to reduce the case of $\R^k=\R^{k-1}\times\R$ to that of $\R^{k-1}$ and $\R$, and to iterate. This argument is most easily expressed in the parametrization
$$ 
q=\frac{2(\gamma+\frac k2)}{\gamma+\frac k2 -1} \,.
$$
The theorem will require the assumption $\gamma\geq\frac12$, which however will not present a restriction in our application. It appears in the proof since the constant $C_{\tilde q}^{\rm GN}(\R)$ with $\tilde q= \frac{2(\gamma+\frac 12)}{\gamma+\frac 12 -1}$ is only defined for $\gamma\geq\frac12$.

\begin{lemma}
	Let $k\geq 1$ and $\gamma\geq\frac12$. Then
	$$
	C_{\frac{2(\gamma+\frac k2)}{\gamma+\frac k2-1}}^{\rm GN}(\R^k) \geq \left( \left( \frac{k\pi}{2} \right)^\frac k2 \frac{\Gamma(\gamma+\frac12)}{\Gamma(\gamma+\frac{k+1}2)} \, \frac{(\gamma+\frac{k-1}{2})^{\gamma+\frac{k-1}2}}{(\gamma-\frac12)^{\gamma-\frac12}} \right)^\frac{1}{\gamma+\frac k2}
	$$
	with the convention $(\gamma-\frac12)^{\gamma-\frac12}=1$ for $\gamma=\frac12$.
\end{lemma}

As the following proof will show, the inequality in the lemma is an equality if $k=1$.

\begin{proof}
	For $d\in\N$, let us introduce the constant
	$$
	L_{\gamma,d}^{(1)} = \frac{\gamma^\gamma \left(\frac d2\right)^\frac d2}{(\gamma+\frac d2)^{\gamma+\frac d2}} \, \Big(C_\frac{2(\gamma+\frac d2)}{\gamma+\frac d2-1}^{\rm GN}(\R^d)\Big)^{-\gamma-\frac d2} \,.
	$$
	The main step of the proof will be to show that
	\begin{equation}
		\label{eq:ltproduct}
			L_{\gamma,k}^{(1)} \leq \prod_{j=0}^{k-1} L_{\gamma+\frac j2,1}^{(1)} \,.
	\end{equation}
	Accepting this for the moment, let us complete the proof. Inserting Nagy's explicit expression for the constant $C_q^{\rm GN}(\R)$ \cite{Na} (see also \cite[Theorem 2.48]{FLW}), which reads
\begin{equation}\label{eq:Nag}
	C_{q}^{\rm GN}(\R) = \left( \frac{(q+2)^{q+2}}{(q-2)^{q-2}2^{2(q+2)}} \right)^\frac{1}{2q} \left( \sqrt\pi \, \frac{\Gamma(\frac{q}{q-2})}{\Gamma(\frac{q}{q-2}+\frac12)} \right)^\frac{q-2}{q}\,,
\end{equation} 
into the definition of $L_{\gamma,1}^{(1)}$, we obtain
	$$
	L_{\gamma,1}^{(1)} = \frac{1}{\sqrt\pi}\, \frac{\Gamma(\gamma+1)}{\Gamma(\gamma+\frac12)}\, \frac{(\gamma-\frac12)^{\gamma-\frac12}}{(\gamma+\frac12)^{\gamma+\frac12}} \,;
	$$
	see also \cite[Corollary 5.4]{FLW}. Inserting this into \eqref{eq:ltproduct} and simplifying we obtain the inequality in the lemma.
	
	It remains to prove \eqref{eq:ltproduct}. According to \cite[Proposition 5.3]{FLW}, the constant $L_{\gamma,d}^{(1)}$ coincides with the so-called one-particle Lieb--Thirring constant. (This observation is essentially due to Lieb and Thirring \cite{LiTh}.) We use now the inequality
	\begin{equation}
		\label{eq:ltproductstep}
		L_{\gamma,k}^{(1)} \leq L_{\gamma,1}^{(1)} \, L_{\gamma+\frac12,k-1}^{(1)}
	\end{equation}
	from \cite{FGL}; see also \cite[Proposition 5.14]{FLW}. This bound can be translated into the claimed lower bound on $C_{q_k}^{\rm GN}(\R^k)$ in terms of the constants $C_{q_1}^{\rm GN}(\R)$ and $C_{q_{k-1}}^{\rm GN}(\R^{k-1})$. It is more convenient, however, to stay in the notation of the Lieb--Thirring constants. Iterating the above inequality, we arrive at \eqref{eq:ltproduct}.
	
	We emphasize that \eqref{eq:ltproductstep} can be translated into a lower bound on $C_{q_k}^{\rm GN}(\R^k)$ in terms of the constants $C_{q_1}^{\rm GN}(\R)$ and $C_{q_{k-1}}^{\rm GN}(\R^{k-1})$, where $q_d=\frac{2(\gamma+\frac d2)}{\gamma+\frac d2-1}$. This bound could also be proved directly using the method of proof of Proposition \ref{lifting}.
\end{proof}

In our application, we need $C^{\rm GN}_q(\R^k)$ with $q=\frac{2(Q+k)}{Q+k-2}$ where $Q=2n+2$. We see that $\frac{2(\gamma+\frac k2)}{\gamma+\frac k2 -1} = q$ if $\gamma=\frac Q2$. Note also that, since $Q\geq 4$, the assumption $\gamma\geq\frac12$ is satisfied. Therefore in our application we have
\begin{equation}
	\label{eq:cgnlower}
	C_q^{\rm GN}(\R^k) \geq \left( \left( \frac{k\pi}{2} \right)^\frac k2 \frac{\Gamma(\frac{Q+1}2)}{\Gamma(\frac{Q+k+1}2)} \, \frac{(\frac{Q+k-1}{2})^{\frac{Q+k-1}2}}{(\frac{Q-1}2)^{\frac{Q-1}2}} \right)^\frac{2}{Q+k}.
\end{equation}

We will deduce our main result from this bound, Corollary \ref{pleijellifting}, the known facts about $\tilde\gamma_n$ and the monotonicity properties in the following lemma.

\begin{lemma}\label{mono}
	For each $Q\geq 4$,
	$$
	k \mapsto 2^{2k} \, \frac{\Gamma(\frac{Q+k+1}2)\,\Gamma(\frac{Q+k+2}2)}{(Q+k-1)^{\frac{Q+k-1}2} \, (Q+k)^\frac{Q+k}2}
	$$
	is decreasing for $k\geq 0$.
\end{lemma}

\begin{proof}
	Denoting
	$$
	\alpha_k := 2^{2k} \, \frac{\Gamma(\frac{Q+k+1}2)\,\Gamma(\frac{Q+k+2}2)}{(Q+k-1)^{\frac{Q+k-1}2} \, (Q+k)^\frac{Q+k}2} \,,
	$$
	we have
	$$
	\frac{\alpha_k}{\alpha_{k-1}} = 2 \, \frac{(Q+k-2)^{\frac{Q+k-2}2}}{(Q+k)^\frac{Q+k-2}2} \,,
	$$
	which we can write as $2(1- x_0^{-1})^{x_0-1}$ with $x_0=\frac{Q+k}{2}$. We will prove momentarily that $x\mapsto(1-x^{-1})^{x-1}$ is decreasing for $x\geq 1$. Since it is equal to $1/2$ at $x=2$ and since $x_0> 2$ (provided $k-1\geq 0\geq 4-Q$), we deduce that $2(1- x_0^{-1})^{x_0-1}< 1$, proving that $\alpha_k/\alpha_{k-1}< 1$.
	
	It remains to prove that $x\mapsto(1-x^{-1})^{x-1}$ is decreasing for $x\geq 1$ or, what is the same, $x\mapsto (x-1)\ln(1-x^{-1})$ is decreasing for $x\geq 1$. This follows since the derivative of the latter function is $\ln(1-x^{-1}) + x^{-1}\leq 0$ for $x>1$.	
\end{proof}

We finally prove the main result of this subsection.

\begin{proof}[Proof of Proposition \ref{largedimension}]
	Inserting the bound \eqref{eq:cgnlower} into Corollary \ref{pleijellifting} we obtain
	\begin{align}\label{eq:gnbound}
		\gamma(\mathbb H_n\times\R^k) & \leq 
		 2^{2k} \, \frac{\Gamma(\frac{Q+k+1}2)\,\Gamma(\frac{Q+k+2}2)}{\Gamma(\frac{Q+1}2)\,\Gamma(\frac {Q+2}2)} \, \frac{(Q-1)^{\frac{Q-1}2} \, Q^\frac Q2}{(Q+k-1)^{\frac{Q+k-1}2} \, (Q+k)^\frac{Q+k}2} \, \tilde\gamma_n \,.
	\end{align}

	First consider $n\geq 4$. According to Lemma \ref{mono}, the right side of \eqref{eq:gnbound} is decreasing with respect to $k$. For $k=0$ it is equal to $\tilde\gamma_n$. Since we know that $\tilde\gamma_n<1$ for $n\geq 4$, we learn that $\gamma(\mathbb H_n\times\R^k)<1$ for all $n\geq 4$ and $k\geq 0$.
	
	Now let $n= 3$. Again by Lemma \ref{mono}, the right side of \eqref{eq:gnbound} is decreasing with respect to $k$. For $k=1$ it is given by
	$$
	4 \, \frac{\Gamma(\frac{11}2)}{\Gamma(\frac{9}2)} \, \frac{7^{\frac{7}2}}{9^\frac{9}2} \, \tilde\gamma_3 = 2 \, \left( \frac 79\right)^{\frac{7}2} \tilde\gamma_3
	\approx 8.872 \cdot 10^{-1} <1 \,.
	$$
	Here we used the value $\tilde\gamma_3 \approx 1.069$ from Subsection \ref{sec:maincompk0}. This proves that $\gamma(\mathbb H_3\times\R^k)<1$ for all $k\geq 1$. This completes the proof.
\end{proof}


\section{Continuation by looking at a direct product}\label{s10}
This section is a short aside from the main topic of this part. We will explain that most of the arguments in the previous sections generalize from $\mathbb H_n\times\R^k$ to the case of
$$
G = G_1 \times G_2 \,,
$$
where $G_1$ and $G_2$ are stratified, nilpotent Lie groups.  If $\Delta^{G_j}$ is the sub-Laplacian on $G_j$, then
$$
\Delta^G = \Delta^{G_1} \otimes I + I\otimes \Delta^{G_2}
$$
is the sub-Laplacian on $G$.

\medskip

\subsection{Spectral density}~\\
If $Q_1$ and $Q_2$ denote the homogeneous dimensions of $G_1$ and $G_2$, respectively, then
$$
Q:= Q_1 + Q_2
$$
is the homogeneous dimension of $G$.
We recall that for $G_1$ and $G_2$ we have the analogues of formula \eqref{w2}  
$$
\1(-\Delta^{G_1} <\mu)(\zeta_1,\zeta_1)=\mathcal W(G_1) \, \mu^{Q_1/2}
\qquad\text{for all}\ \zeta_1 \in G_1
$$
and
$$
\1 (-\Delta^{G_2} <\mu)(\zeta_2,\zeta_2)=\mathcal W(G_2) \, \mu^{Q_2/2}
\qquad\text{for all}\ \zeta_2 \in G_2 \,.
\,,
$$
We have the same formula for $G$
\begin{equation}\1 (-\Delta^{G_1\times G_2} <\mu)(\zeta,\zeta)=  \mathcal W(G) \, \mu^{Q/2} \qquad\text{for all}\ \zeta\in G
 \end{equation}
and the same argument as in Proposition \ref{wnk} shows that
\begin{equation}\label{eq:weylconstproduct}
\mathcal W(G) := \mathcal W(G_1) \, \mathcal W(G_2) \, \frac{\Gamma(\frac{Q_1}2 +1) \, \Gamma(\frac{Q_2}2 +1)}{\Gamma (\frac Q2+1)} \,.
\end{equation}

\medskip

\subsection{Sobolev inequality}~\\
We now discuss the Faber--Krahn part of Pleijel's proof. In fact, as in the previous section it will be more convenient to work with the Sobolev inequality rather than the Faber--Krahn inequality. Thus, in this subsection we try to get a good lower bound on the Sobolev constant $C^{\rm Sob}(G)$ in
$$
\iint_{G_1\times G_2}  |\nabla_G u|^2 d\zeta_1\,d\zeta_2 \geq C^{\rm Sob}(G) \left( \iint_{G_1\times G_2} |u|^q \,d\zeta_1\,d\zeta_2 \right)^\frac{2}{q},
$$
where we abbreviate
$$
q= \frac{2(Q_1+Q_2)}{Q_1+Q_2-2}= \frac{2 Q}{Q-2} \,.
$$
For the validity of the inequality we require $Q\geq 3$.

We use the Sobolev inequality on $G_1$ (assuming $Q_1 \geq 3$),
$$
\int_{G_1} |\nabla_{G_1} u|^2 d\zeta_1 \geq C^{\rm Sob}(G_1) \, \left( \int_{G_1} |u|^\frac{2Q_1}{Q_1 - 2}\,d\zeta_1 \right)^\frac{Q_1-2}{Q_1},
$$
as well as the following Sobolev interpolation inequality on $G_2$,
$$
\left( \int_{G_2} |\nabla_{G_2}  u|^2\,d\zeta_2 \right)^{\theta} \left( \int_{G_2} | u|^2\,d \zeta_2 \right)^{1-\theta}
\geq C^{\rm GN}_q(G_2)  \left( \int_{G_2} |u|^{q} \,d\zeta_2 \right)^\frac2 {q},
$$
where the parameters $q$ and $\theta$ are related by
$$
 \frac 1 q  = \frac {1-\theta}{2} + \theta \, \frac{Q_2-2}{2Q_2} \,. 
$$
When $Q_2\geq 3$, this inequality holds in the range $\theta\in[0,1]$, corresponding to $q\in [2,\frac{2Q_2}{Q_2-2}]$, while if $Q_2=1,2$, it holds in the range $\theta\in[0,\frac{Q_2}2)$, corresponding to $q\in[2,\infty)$.

These inequalities are known to hold; see, for example \cite{Fo, Va} for the case of dimension $Q_1,Q_2\geq 3$. We do not know of an explicit reference for the inequality in case $Q_2=1,2$, but it can easily be deduced from existing results in the literature. One way is to proceed as in the proof of Lemma \ref{faberkrahn} by `artificially' adding a factor of $\R^k$ (with its additive group structure) and applying the result in dimension $\geq 3$. Another way is to apply \cite[Corollary 3.6]{BaCoLeSC}, which reduces the validity of the Sobolev inequality to the validity of another inequality. In the present case we can use \cite[Theorem 8.1]{BaCoLeSC} in view of the validity of the isoperimetric inequality on $G_2$; see, for instance, \cite[Proposition 1.17 and the following remarks]{HaKo}.

We will apply the Sobolev interpolation inequality with the parameter
$$
\theta_{12} = \frac{Q_2}Q\,,\qquad \frac 1q = \frac 12 - \frac{1}{Q}\,.
$$
Note that this choice satisfies $\theta_{12}\in[0,1]$ if $Q_2\geq 3$ and $\theta_{12}\in[0,\frac{Q_2}{2})$ if $Q_2=1,2$ (recall that we assume $Q_1\geq 3$), so the Sobolev interpolation inequality is indeed valid.

Proceeding in the same way as in the proof of Proposition \ref{lifting}, we obtain the following bound for the Sobolev inequality on $G$:
\begin{equation}\label{eq:asym}
C^{\rm Sob}(G)\geq  C^{\rm GN}_q(G_2)\,  \Big(C^{\rm Sob} (G_1)\Big)^{Q_1/Q} \frac{Q} {Q_1^{Q_1/Q} Q_2^{Q_2/Q}}\,,
\end{equation}
where we recall that $q$ is determined according to $\frac 1q = \frac 12 - \frac{1}{Q}$.

When both $Q_1\geq 3$ and $Q_2 \geq 3$, we can apply H\"older's inequality to deduce that $C^{\rm GN}_q(G_2)\geq \Big( C^{\rm Sob}(G_1)\Big)^{Q_1/Q}$ and obtain the more symmetric bound
\begin{equation}\label{eq:sym}
C^{\rm Sob}(G)\geq  \Big( C^{\rm Sob}(G_1)\Big)^{Q_1/Q}\, \Big(C^{\rm Sob}(G_2)\Big)^{Q_2/Q}  \frac{Q} {Q_1^{Q_1/Q} Q_2^{Q_2/Q}}\,.
\end{equation}


\medskip

\subsection{Pleijel's Theorem}~\\
The proof of Pleijel's bound in Theorem \ref{pleijelnk} extends with obvious changes to the case of general stratified nilpotent groups. Bounding the Faber--Krahn constant that appears in this bound by the corresponding Sobolev constant as in Lemma \ref{fksob}, we arrive at a Pleijel-type bound with constant
$$
\tilde\gamma (G) := \Big(C^{\rm Sob}(G)\Big)^{-\frac{Q}{2}}\mathcal W(G)^{-1} \,.
$$

In particular, when both $Q_1\geq 3$ and $Q_2\geq 3$, we can insert formula \eqref{eq:weylconstproduct} and inequality \eqref{eq:sym} and arrive at the bound  
\begin{align}\label{eq:Pla}
\tilde\gamma (G) \leq \tilde\gamma (G_1) \, \tilde\gamma (G_2) \ \frac{Q_1^{\frac{Q_1}2}\, Q_2^{\frac{Q_2}2}}{Q^{\frac Q2}} \,
\frac{\Gamma (\frac Q2+1)}{\Gamma(\frac{Q_1}2 +1) \, \Gamma(\frac{Q_2}2 +1)} \,.
\end{align}

Specializing to the case $G_2=\R^k$ we can get

\begin{theorem}
Let $G_1$ a stratified nilpotent group of homogeneous dimension $Q_1\geq 3$. Then there exists $k_0(G_1)$ such that for any $k\geq k_0(G_1)$, Pleijel's Theorem holds  for the Dirichlet realization in any domain $\Omega\subset G_1\times\R^k$ of the canonical sub-Laplacian associated with  the  group $G_1 \times \mathbb R^k$.
\end{theorem}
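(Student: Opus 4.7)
The plan is to derive the claim from the product upper bound \eqref{eq:Pla} applied to $G = G_1 \times \R^k$. For $k \geq 3$ both factors satisfy the hypothesis $Q_j \geq 3$ used to derive the symmetric Sobolev lifting \eqref{eq:sym}, so \eqref{eq:Pla} gives
$$\tilde\gamma(G_1\times\R^k) \;\leq\; \tilde\gamma(G_1)\ \tilde\gamma(\R^k)\ F(Q_1,k),$$
with
$$F(Q_1,k) \,:=\, \frac{Q_1^{Q_1/2}\,k^{k/2}}{(Q_1+k)^{(Q_1+k)/2}}\ \frac{\Gamma\!\bigl(\tfrac{Q_1+k}{2}+1\bigr)}{\Gamma\!\bigl(\tfrac{Q_1}{2}+1\bigr)\,\Gamma\!\bigl(\tfrac{k}{2}+1\bigr)}.$$
By the stratified-group analogue of Theorem~\ref{pleijelnk} recorded in Section~\ref{s10}, the condition $\tilde\gamma(G_1\times\R^k)<1$ implies the validity of Pleijel's theorem for the Dirichlet sub-Laplacian on any relatively compact open subset $\Omega\subset G_1\times\R^k$. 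Since $\tilde\gamma(G_1)$ is a positive constant depending only on $G_1$, it therefore suffices to show that $\tilde\gamma(\R^k)\,F(Q_1,k)\to 0$ as $k\to\infty$.

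The first ingredient is the exponential decay of $\tilde\gamma(\R^k)$. Inserting the Aubin--Talenti value \eqref{eq:sobaubintal} for $C^{\rm Sob}(\R^k)$ together with the classical identity $\mathcal W(\R^k) = (4\pi)^{-k/2}/\Gamma(\tfrac{k}{2}+1)$, and applying Stirling's formula to $\Gamma((k+1)/2)$ and $\Gamma(k/2+1)$, a short computation yields
$$\ln\tilde\gamma(\R^k) \;=\; -(1-\ln 2)\,k \,+\, O(\ln k) \qquad\text{as}\ k\to\infty,$$
so $\tilde\gamma(\R^k)\to 0$ at an exponential rate since $1-\ln 2>0$.

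The second ingredient is that $F(Q_1,k)$ stays bounded for fixed $Q_1$ as $k\to\infty$. Writing $(Q_1+k)/2$ and $k/2$ and expanding with Stirling's formula in the form \eqref{eq:Stir} (so that the cross-cancellations of the $k\ln k$ terms are visible), one obtains
$$F(Q_1,k) \;\longrightarrow\; \frac{Q_1^{Q_1/2}}{(2e)^{Q_1/2}\,\Gamma(\tfrac{Q_1}{2}+1)} \qquad\text{as}\ k\to\infty,$$
a finite positive limit depending only on $Q_1$.

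Combining the two ingredients, $\tilde\gamma(G_1\times\R^k)\to 0$ as $k\to\infty$, so we may choose $k_0 = k_0(G_1) \geq 3$ so that $\tilde\gamma(G_1\times\R^k)<1$ whenever $k\geq k_0(G_1)$, which is the desired Pleijel bound. There is no serious obstacle here; the only delicate point would be to make $k_0(G_1)$ quantitatively explicit, which would require tracking the $O(\ln k)$ and $O(1/k)$ remainders in the two Stirling expansions above, but this is not required by the statement.
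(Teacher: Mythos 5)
Your proof follows essentially the same route as the paper's: both apply the product bound \eqref{eq:Pla} for $\tilde\gamma(G_1\times\R^k)$ and use Stirling's formula to show that the Gamma-function factor converges to the finite limit $Q_1^{Q_1/2}(2e)^{-Q_1/2}\Gamma(\tfrac{Q_1}{2}+1)^{-1}$. The one place where you go further than the paper is that you supply an explicit derivation of the exponential decay $\ln\tilde\gamma(\R^k) = -(1-\ln 2)k + O(\ln k)$, whereas the paper's short proof invokes \eqref{eq:Pla} and leaves $\tilde\gamma(\R^k)\to 0$ as understood; your extra computation closes that small gap cleanly and is consistent with the identity $\mathcal W(\R^k)=(4\pi)^{-k/2}\Gamma(\tfrac{k}{2}+1)^{-1}$ and the Aubin--Talenti constant \eqref{eq:sobaubintal}.
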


More generally, assume that $G_1$ a stratified nilpotent group of homogeneous dimension $Q_1\geq 3$ and assume that $G_2^{(k)}$ is a sequence of nilpotent groups such that the homogeneous dimensions $Q_2^{(k)}$ tend to $\infty$ and the corresponding constants $\tilde\gamma(G_2^{(k)})$ tend to $0$. Then there exists $k_0$ such that for $k\geq k_0$ we have
$$
\gamma(G_1 \times G_2^{(k)} ) <1\,.
$$

\begin{proof}
	According to Stirling's approximation, we have
	\begin{equation}\label{eq:Stir}
		\ln \big( x^{-\frac{x+1}2} \Gamma(\frac{x+2}2) \big) = - \frac x2 \ln(2e) - \frac12 \ln \pi + \mathcal O(x^{-1})
		\qquad\text{as}\ x\to\infty \,.
	\end{equation}
	This implies that, as $k\rightarrow +\infty$, 
	$$
	\frac{Q_2^{Q_2/2}}{Q^{Q/2}} \frac{\Gamma (Q/2+1)}{\Gamma(Q_2/2 +1)}  \rightarrow (\frac{1}{2e})^{Q_1/2} \,.
	$$
	The assertion then follows from \eqref{eq:Pla} and the assumption $\tilde\gamma(G_2^{(k)})\to 0$.
\end{proof}


\section{A second bound on the Faber--Krahn constant}\label{sec:isoperimetric}

\medskip

\subsection{Preliminary discussion}~\\
While in the previous two sections we have used the Sobolev constant to get bounds on the Faber--Krahn constant,  we will use in this section the isoperimetric constant for this purpose. In those previous two sections we have faced the difficulty that the optimal Sobolev constant on $\mathbb H_n\times\R^k$ is not known when $k\geq 1$ and our main work consisted in getting bounds on this constant. Similarly, the optimal isoperimetric constant on $\mathbb H_n\times\R^k$ is not known when $k\geq 0$ and we will try to get bounds on it. We emphasize that the isoperimetric constant is even unknown in the case of $\mathbb H_n$ (that is, $k=0$), despite a famous conjecture of Pansu \cite{Pa2} and several efforts to prove it, which we cite below.

It is known that, in a relatively general setting, the validity of an isoperimetric inequality implies the validity of a Faber--Krahn inequality; see \cite{Car,BaCoLeSC,Va}. These works also provide bounds on the Faber--Krahn constant in terms of the isoperimetric constant, but the numerical values in these bounds have not been the main concern in these works and, as far as we know, they are not sufficient to deduce Pleijel's theorem. In contrast to these works, we will use methods that are more specific to the problem at hand and use the particularities of the Heisenberg group and of Euclidean space. (For instance, in Lemma \ref{folland} we use the form of the Green's function on the Heisenberg group, and in Theorem \ref{isoperk} we use the isoperimetric inequality on Euclidean space.)

 
\medskip
 
\subsection{Definitions}~\\
We study the isoperimetric inequality in $\mathbb H_n\times\R^k$ from the point of view of the theory of sets of finite perimeter. For an introduction to this theory in the Euclidean setting we recommend \cite{Ma}. From this theory we recall, for instance, that the perimeter of a measurable set $E\subset\R^k$ is defined as
$$
\per_{\R^k} E := \sup \left\{ \int_E {\rm div \,} \phi \, dw  :\ \phi\in C^1_c(\R^k,\R^k),\, |\phi|\leq 1 \right\}
$$
and that for bounded sets with $C^1$ (or even Lipschitz) boundary this coincides with the standard surface area of the set; see \cite[Examples 12.5 and 12.6]{Ma}. The definition of sets of finite perimeter goes back to Caccioppoli and their theory was developed by De Giorgi.

There is a natural and well-known extension of this theory to the setting of stratified nilpotent groups, but we limit ourselves here to the case of $\mathbb H_n\times\R^k$. In this case (horizontal) perimeter of a measurable set $E\subset\mathbb H_n\times\R^k$ is defined to be
\begin{align*}
	\per_{\mathbb H_n\times\R^k}( E):= & \sup\left\{ \int_E \left( \sum_{j=1}^n (X_j\phi+Y_j\phi) + \sum_{i=1}^k W_i\phi \right) dx\,dy\,dz\,dw :\ \right. \\ 
	& \qquad\qquad\qquad\qquad \left. {\phantom {\int_E}} \phi\in C^1_c(\mathbb H_n\times\R^k,\R^{2n+k}),\, |\phi|\leq 1 \right\}.
\end{align*}
We denote by $I(\mathbb H_n\times\R^k)$ the largest constant such that for every measurable set $E\subset\mathbb H_n\times\R^k$ of finite measure one has\footnote{More generally, for a nilpotent stratified group $G$ of homogeneous dimension $Q$ the isoperimetric inequality reads
$$
{\rm Per}_G (E) \geq I(G)\, |E|^{\frac{Q-1}{Q}}\,.
$$
}
$$
\per_{\mathbb H_n\times\R^k} (E) \geq I(\mathbb H_n\times\R^k) \, |E|^\frac{2n+1+k}{2n+2+k} \,.
$$
For background and further details we refer to \cite{GaNh}, as well as the textbook \cite{CaDaPaTy} treating the model case of $\mathbb H_1$.


\medskip

\subsection{Symmetrization}~\\
In this subsection we show that a bound on the isoperimetric constant implies a bound on the Faber--Krahn constant. Arguments in this spirit are known in the Riemannian context and appear, for instance in \cite[Theorem I.1.5]{BerMe}.  As usual, $j_{\nu,1}$ denotes the first positive zero of the Bessel function $J_\nu$. For the definition and properties of Bessel functions, we refer to \cite{AbSt}.

\begin{proposition}\label{symmetrization}
	For $n\geq 0$ and $k\geq 0$, we have
	$$
	C^{\rm FK}(\mathbb H_n\times\R^k) \geq I(\mathbb H_n\times\R^k)^2 \, (2n+2+k)^{-2} j_{\frac{2n+k}{2},1}^2 \,.
	$$
\end{proposition}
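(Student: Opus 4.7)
My plan is to use a Pólya--Szegő-type symmetrization, comparing functions $u \in S^1_0(\Omega)$ on $\mathbb H_n\times\R^k$ to their radial decreasing rearrangements $u^\star$ on the Euclidean ball $B_R \subset \R^Q$, where $Q = 2n+2+k$. After reducing to $u \geq 0$ (by replacing $u$ with $|u|$), I would take $R = (|\Omega|/\omega_Q)^{1/Q}$ (so that $|B_R|_{\R^Q} = |\Omega|$) and define $u^\star$ on $B_R$ to be radially symmetric, decreasing, and equimeasurable with $u$: $|\{u^\star > t\}|_{\R^Q} = \mu(t) := |\{u > t\}|$ for all $t > 0$. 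Equimeasurability gives $\int (u^\star)^2 = \int u^2$, so the proposition reduces to establishing the sub-Riemannian Pólya--Szegő comparison
\begin{equation*}
\int_{\mathbb H_n\times\R^k} |\nabla_H u|^2 \;\geq\; \left(\frac{I(\mathbb H_n\times\R^k)}{I(\R^Q)}\right)^{\!2} \int_{\R^Q} |\nabla u^\star|^2 \,.
\end{equation*}
Once this is in hand, the classical Faber--Krahn inequality on $B_R$ gives $\int |\nabla u^\star|^2 \geq j_{Q/2-1,1}^2 R^{-2}\int (u^\star)^2$; substituting $I(\R^Q) = Q\omega_Q^{1/Q}$ and $R^{-2} = \omega_Q^{2/Q}|\Omega|^{-2/Q}$ cancels the $\omega_Q$ factors, and the identity $Q/2-1 = (2n+k)/2$ produces exactly the claimed constant.

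To establish the Pólya--Szegő comparison I would follow the standard Talenti argument. Writing the sub-Riemannian coarea formula and applying Cauchy--Schwarz on the level sets $\{u=t\}$ yields
\begin{equation*}
\int_{\{u=t\}} |\nabla_H u|\, d\mathcal P_H \;\geq\; \frac{P_{\mathbb H_n\times\R^k}(\{u > t\})^2}{-\mu'(t)} \,,
\end{equation*}
where $\mathcal P_H$ is the horizontal perimeter measure. The isoperimetric inequality then gives $P_{\mathbb H_n\times\R^k}(\{u > t\}) \geq I(\mathbb H_n\times\R^k)\,\mu(t)^{(Q-1)/Q}$, and integrating in $t$ produces
\begin{equation*}
\int |\nabla_H u|^2 \;\geq\; I(\mathbb H_n\times\R^k)^2 \int_0^\infty \frac{\mu(t)^{2(Q-1)/Q}}{-\mu'(t)}\,dt \,.
\end{equation*}
The key observation is that both inequalities become equalities for $u^\star$: its superlevel sets are Euclidean balls, hence saturate the Euclidean isoperimetric inequality, and on these balls the horizontal gradient is constant on level sets so that Cauchy--Schwarz is sharp. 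This yields
\begin{equation*}
\int_{\R^Q} |\nabla u^\star|^2 \;=\; I(\R^Q)^2 \int_0^\infty \frac{\mu(t)^{2(Q-1)/Q}}{-\mu'(t)}\,dt \,,
\end{equation*}
and dividing the two relations gives the desired Pólya--Szegő inequality.

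The principal obstacle is making the Pólya--Szegő step rigorous in the sub-Riemannian setting: the coarea formula and the Cauchy--Schwarz bound for level sets require $u$ to be sufficiently smooth, the critical set $\{\nabla_H u = 0\}$ must be handled with care (via Sard's theorem and truncation of critical values of $u$), and a density argument is needed to pass from smooth $u$ to general $u \in S^1_0(\Omega)$. In the Euclidean setting these steps are classical (Talenti); for Carnot groups the required horizontal BV theory, coarea formula and perimeter notion are well developed (cf.~\cite{GaNh}), and since $\mathbb H_n\times\R^k$ is itself a stratified group the argument should transfer without essential change. A minor secondary point is to verify the absolute continuity of $\mu$ and the regularity needed to write $-\mu'(t)$, which is handled by the coarea formula on intervals of regular values.
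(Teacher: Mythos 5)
Your proposal is correct and follows essentially the same route as the paper: both prove a Pólya--Szegő--type gradient comparison via the coarea formula, the isoperimetric inequality, and Cauchy--Schwarz (Hölder with $p=2$), and both then apply the Euclidean radial Faber--Krahn inequality to the rearranged profile. The only cosmetic difference is that the paper works directly with the radial profile $u_*$ on $(0,\infty)$ against the weight $r^{D-1}\,dr$ (omitting the $|\Sph^{D-1}|$ factor, as it explicitly remarks), whereas you place $u^\star$ on a ball $B_R\subset\R^Q$ and let the $\omega_Q$ factors cancel; the computations are equivalent.
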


The proof shows that equality holds in the Faber--Krahn inequality with the value given by the right side, provided that every superlevel set of $u$ attains equality in the isoperimetric inequality. This happens in the Euclidean setting where $n=0$. In this case, we have
\begin{equation}
	\label{eq:isofkexplicit}
	I(\R^k) = k^\frac{k-1}{k} |\Sph^{k-1}|^\frac1k 
	\qquad\text{and}\qquad
	C^{\rm FK}(\R^k)= k^{2\frac{k-1}{k}} |\Sph^{k-1}|^\frac2k k^{-2} j_{\frac{k-2}{2},1}^2 = \omega_k^{\frac2k} j_{\frac{k-2}{2},1}^2\,,
\end{equation}
where $\omega_k$ is the measure of the unit ball in $\R^k$; see, for instance, \cite[Theorem 14.1]{Ma} or \cite{Ta} for the first equality and \cite[Theorem 2.54]{FLW} or \cite[Theorem 5.1.2]{LMP} for the second one.

Incidentally, it follows from \eqref{eq:isofkexplicit} that the inequality in Proposition \ref{symmetrization} can be stated as
\begin{equation}
	\label{eq:symmetrization}
	C^{\rm FK}(\mathbb H_n\times\R^k)\, I(\mathbb H_n\times\R^k)^{-2} \geq
	C^{\rm FK}(\R^{2n+2+k}) \, I(\mathbb R^{2n+2+k})^{-2}\,.
\end{equation}

\begin{proof}
	We abbreviate $D := 2n+2+k$. 
	Our goal is to prove inequality \eqref{eq:fk} for a given open set $\Omega\subset\mathbb H_n\times\R^k$ and a given function $u\in C^1_c(\Omega)$. (It will then extend to $S^1_0(\Omega)$ by density.) By truncation properties of Sobolev functions we may assume that $u\geq 0$. We define $u_*$ to be a nonincreasing function on $(0,\infty)$ such that
	$$
	|\{ u >\tau \}| = \int_{\{u_*(r)>\tau\}} r^{D-1}\,dr
	\qquad\text{for all}\ \tau>0 \,.
	$$
	(We could multiply the right side by a constant, for instance $|\Sph^{D-1}|$, to make it look more like $\R^D$, but this does not change the final outcome.) It follows from the layer cake formula that, for any $q>0$,
	\begin{equation}
		\label{eq:layercake}
		\int_{\mathbb H_n\times\R^k} u^q\,dx\,dy\,dz\,dw = \int_0^\infty u_*(r)^q r^{D-1}\,dr \,.
	\end{equation}
	
	We are going to show that, for any $p\geq 1$,
	\begin{equation}
		\label{eq:rearrangegrad}
		\int_{\mathbb H_n\times\R^k} |\nabla_{\mathbb H_n} u|^p\,dx\,dy\,dz\,dw 
		\geq I_{n,k}^p D^{-p\frac{D-1}{D}} \int_0^\infty (-u_*'(r))^p\,r^{D-1}\,dr \,,
	\end{equation}
	where, for short, 
	$$
	I_{n,k}:=I(\mathbb H_n\times\R^k) \,.
	$$
	To prove this, we note that the coarea formula implies that
	\begin{equation}
		\label{eq:coarea}
			|\{ u>\tau\}| = \int_\tau^\infty \left( \int |\nabla_{\mathbb H_n\times\R^k} u|^{-1} \,d|\nabla_{\mathbb H_n\times\R^k}\1_{\{u>t\}}| \right) dt \,.
	\end{equation}
	Let us explain the notation used here. First, we abbreviated
	$$
	|\nabla_{\mathbb H_n\times\R^k} u| = \sqrt{ \sum_{j=1}^n ((X_j u)^2 + (Y_j u)^2) + \sum_{i=1}^k (W_i u)^2 } \,.
	$$
	Next, in the proof of the coarea formula one shows that the sets $\{ u> t\}$ have finite perimeter for almost every $t$, and $|\nabla_{\mathbb H_n\times\R^k}\1_{\{u>t\}}|$ denotes the corresponding perimeter measure. To recall the definition of the latter, let us note that, by Riesz's theorem, if $E$ is a set of finite perimeter, then there is an $\R^{2n+k}$-valued Radon measure $\mu_E$ on $\mathbb H_n\times\R^k$ such that
	$$
	\int_E \left( \sum_{j=1}^n (X_j\phi+Y_j\phi) + \sum_{i=1}^k W_i\phi \right) dx\,dy\,dz\,dw = - \int_E \phi\cdot d\mu_E
	$$
	for all $\phi\in C^1_c(\mathbb H_n\times\R^k,\R^{2n+k})$. The perimeter measure $|\nabla_{\mathbb H_n\times\R^k} \1_E|:= |\mu_E|$ is the corresponding total variation measure. Note that, in particular, we have 
	$$
	\per_{\mathbb H_n\times\R^k} E = \int d|\nabla_{\mathbb H_n\times\R^k}\1_E|\,.
	$$  
	(Whenever $\{u=t\}$ is a sufficiently smooth hypersurface the perimeter measure $|\nabla_{\mathbb H_n\times\R^k}\1_{\{u>t\}}|$ is concentrated on $\{u=t\}$.)
	
	For a textbook proof of the coarea formula in the Euclidean case we refer to \cite[Theorem 13.1]{Ma}. The corresponding formula in the case of stratified nilpotent groups appears in \cite[Theorem 5.2]{GaNh} and \cite[Corollary 2.3.5]{FrSeSC}.
	
	It follows from \eqref{eq:coarea} that
	$$
	\frac{d}{d\tau}|\{ u>\tau\}| = - \int |\nabla_{\mathbb H_n\times\R^k} u|^{-1} \,d|\nabla_{\mathbb H_n\times\R^k}\1_{\{u>\tau\}}| \,.
	$$
	
	By the assumed isoperimetric inequality and H\"older's inequality with $\frac1p + \frac1{p'}=1$, (in fact, only $p=2$ will be relevant), we have
	\begin{align*}
		I_{n,k} |\{ u>\tau\}|^\frac{D-1}{D} & \leq \per_{\mathbb H_n\times\R^k} \{u>\tau\} 
		= \int d|\nabla_{\mathbb H_n\times\R^k}\1_{\{u>\tau\}}| \\
		& \leq \left( \int |\nabla_{\mathbb H_n\times\R^k} u|^{-1} d|\nabla_{\mathbb H_n\times\R^k}\1_{\{u>\tau\}}| \right)^\frac1{p'} \\
		& \quad \times \left( \int |\nabla_{\mathbb H_n\times\R^k} u|^{p-1} d|\nabla_{\mathbb H_n\times\R^k}\1_{\{u>\tau\}}| \right)^\frac 1p.
	\end{align*}
	The previous two relations combined give
	$$
	\int |\nabla_{\mathbb H_n} u|^{p-1} d|\nabla_{\mathbb H_n}\1_{\{u>\tau\}}| \geq I_{n,k}^p |\{ u>\tau\}|^{p\frac{Q-1}{Q}} \left( - \frac{d}{d\tau} |\{ u>\tau\}| \right)^{-p+1}.
	$$
	By the coarea formula again, we deduce that
	\begin{align*}
		\int_{\mathbb H_n\times\R^k} |\nabla_{\mathbb H_n} u|^p \,dx\,dy\,dz\,dw & = \int_0^\infty \int |\nabla_{\mathbb H_n\times\R^k} u|^{p-1} \,d|\nabla_{\mathbb H_n\times\R^k}\1_{\{u>\tau\}}| \,d\tau \\
		& \geq I_{n,k}^p \int_0^\infty  |\{ u>\tau\}|^{p\frac{D-1}{D}} \left( - \frac{d}{d\tau} |\{ u>\tau\}| \right)^{-p+1}\,d\tau \,.
	\end{align*}
	Note that the right side depends on $u$ only through the function $\tau\mapsto|\{ u>\tau\}|$. We express it through the function $u_*$.
	
	For the sake of simplicity we assume that $u_*$ is strictly monotone. The general case can be treated as well, but the idea becomes clearer in this special case. The general can be handled along the lines of \cite[Theorem 15.29]{Leo}. Under the strict monotonicity assumption we can then define a function $[0,\|u\|_\infty]\ni\tau \mapsto R_\tau\in (0,\infty)$ by
	$$
	u_*(R_\tau) = \tau \,.
	$$
	It follows that
	$$
	|\{ u>\tau \}| = \int_{\{u_*(r)>\tau\}} r^{D-1}\,dr = D^{-1} R_\tau^D \,.
	$$
	Moreover, one can show that, if $u$ is weakly differentiable, then $u_*$ is absolutely continuous (an argument of this type in the Euclidean case can be found in \cite[Theorem~15.23]{Leo}) and we have
	$$
	u_*'(R_\tau) \dot R_\tau = 1
	$$
	where the dot denotes differentiation with respect to $\tau$. It follows that
	\begin{align*}
		\int_0^\infty  |\{ u>\tau\}|^{p\frac{Q-1}{Q}} \left( - \frac{d}{d\tau} |\{ u>\tau\}| \right)^{-p+1}\,d\tau
		& = D^{-p\frac{D-1}{D}} \int_0^\infty  R_\tau^{D-1} \left( - u_*'(R_\tau) \right)^p \dot R_\tau \,d\tau \\
		& = D^{-p\frac{D-1}{D}} \int_0^\infty r^{D-1} (-u_*'(r))^p\,dr \,.
	\end{align*}
	Thus, we have completed the proof of \eqref{eq:rearrangegrad}.
	
	At this point we specialize to the case $p=2$ and recall the one-dimensional inequality
	$$
	\int_0^R U'(r)^2 r^{D-1}\,dr \geq j_{\frac{D-2}{2},1}^2 R^{-2} \int_0^R U(r)^2 r^{D-1}\,dr \,,
	$$
	valid for all $R>0$ and all absolutely continuous functions $U$ on $(0,R)$ with $U(R)=0$. This follows from the Faber--Krahn inequality in $\R^D$ when restricted to radial functions; see, e.g., \cite[Theorem 2.54]{FLW}. We apply the above one-dimensional inequality to $U=u_*$ and $R=R_0$. If we insert
	$$
	|\Omega| = |\{ u>0\}| = D^{-1} R_0^D \,,
	$$
	we obtain the claimed inequality.
\end{proof}


\medskip

\subsection{Pansu's conjecture and its consequences}~\\
The validity of the isoperimetric inequality in $\mathbb H=\mathbb H_1$ (that is, the fact that $I(\mathbb H)>0$) is due to Pansu \cite{Pa}. In \cite{Pa2} Pansu made a conjecture about the set that realizes the optimal constant $I(\mathbb H)$. In the references given below this conjecture is generalized to $\mathbb H_n$ with $n\geq 2$ and, after a computation, we find
\begin{equation}
	\label{eq:pansusconj}
	I(\mathbb H_n) = \frac{2n}{2n+1} \, \frac{(2n+2)^{\frac{2n+1}{2n+2}} \, \Gamma(\tfrac{2n+3}{2})^\frac 1{2n+2} \pi^\frac{2n+1}{2(2n+2)} 2^\frac 1{n+1}}{\Gamma(n+1)^\frac 1{n+1}} 
	\qquad\text{(Pansu's conjecture)}\,;
\end{equation}
see, e.g., \cite{DGN2}. (The extra factor of $2^\frac1{n+1}$ compared to their formula comes from the fact that they use $\partial_{x_j} +\frac12 y_j\partial_z$ etc., while we use $\partial_{x_j} + 2y_j\partial_z$ etc.)

Pansu's conjecture is generally believed to be true and has been verified under certain additional assumptions \cite{RR,DGN2,RR2,Ri,Mon,MoRi}; see also \cite{LeRi,LeMa,ChChHwYa,ChHwMaYa} for a sample of contributions to this problem, as well as the textbook \cite{CaDaPaTy} for an introduction to the isoperimetric problem on the Heisenberg group.

We recall that in Subsection \ref{sec:maincompk0} we have proved Pleijel's theorem in $\mathbb H_n$ when $n\geq 4$. Now we will prove it in the remaining dimensions, assuming the validity of Pansu's conjecture.

\begin{corollary}\label{pansupleijelk0}
	Assume that Pansu's conjecture holds for some $n\in\{1,2,3\}$. Then the Pleijel constant $\gamma(\mathbb H_n)$ satisfies for the corresponding value of $n$
	\begin{equation}\label{eq:panpl1}
	\gamma(\mathbb H) \leq 0.406114 \,,
	\qquad
	\gamma(\mathbb H_2) \leq 0.155327 \,,
	\qquad
	\gamma(\mathbb H_3) \leq 0.0641172 \,.
	\end{equation}
\end{corollary}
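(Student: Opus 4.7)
The strategy is a direct chain: Pansu's conjecture supplies the value of the isoperimetric constant $I(\mathbb{H}_n)$; Proposition \ref{symmetrization} converts this into a lower bound for the Faber--Krahn constant $C^{\rm FK}(\mathbb{H}_n)$; the explicit Weyl constant $\mathcal{W}(\mathbb{H}_n)$ from the formulas \eqref{eq:W1}, \eqref{eq:W2} and \eqref{w4}--\eqref{eq:defcn} then yields $\gamma(\mathbb{H}_n)$ via the definition \eqref{eq:defgamma}. There is essentially no hidden difficulty, only bookkeeping with Bessel zeros and Gamma values.

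First, I would specialize Proposition \ref{symmetrization} with $k=0$ to obtain
$$
C^{\rm FK}(\mathbb{H}_n) \;\geq\; I(\mathbb{H}_n)^{2}\,(2n+2)^{-2}\, j_{n,1}^{2},
$$
so that, by \eqref{eq:defgamma},
$$
\gamma(\mathbb{H}_n) \;\leq\; \bigl(I(\mathbb{H}_n)\bigr)^{-2(n+1)}\,(2n+2)^{2(n+1)}\,j_{n,1}^{-2(n+1)}\,\mathcal{W}(\mathbb{H}_n)^{-1}.
$$
Substituting the conjectured value \eqref{eq:pansusconj} of $I(\mathbb{H}_n)$ produces a closed-form upper bound on $\gamma(\mathbb{H}_n)$ in terms of $\Gamma$-values, $\pi$, $j_{n,1}$ and $\mathcal{W}(\mathbb{H}_n)$.

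Next I would plug in the three cases. For $n=1$, one has $\mathcal{W}(\mathbb{H})=1/128$ from \eqref{eq:W1} and $j_{1,1}\approx 3.8317$; Pansu's formula gives $I(\mathbb{H})=\tfrac{2}{3}\cdot 4^{3/4}\Gamma(5/2)^{1/4}\pi^{3/8}\sqrt{2}$. For $n=2$, use $\mathcal{W}(\mathbb{H}_2)=1/(48^2\pi)$ from \eqref{eq:W2}, $j_{2,1}\approx 5.1356$, and the analogous expression for $I(\mathbb{H}_2)$. For $n=3$, one has $c_3=\pi^2(12-\pi^2)/768$, whence $\mathcal{W}(\mathbb{H}_3)=c_3/(8(2\pi)^4)=(12-\pi^2)/(98304\,\pi^2)$, together with $j_{3,1}\approx 6.3802$. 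In each case a numerical evaluation yields the three bounds $0.406114$, $0.155327$, and $0.0641172$ claimed in \eqref{eq:panpl1}.

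The only part requiring care is the arithmetic, since the exponent $-2(n+1)$ amplifies round-off in $I(\mathbb{H}_n)$, and the Bessel zeros $j_{n,1}$ must be used to sufficient precision. I would therefore record the bound in the factored form
$$
\gamma(\mathbb{H}_n) \;\leq\; \frac{(2n+2)^{2(n+1)}}{j_{n,1}^{2(n+1)}\,\mathcal{W}(\mathbb{H}_n)}\,I(\mathbb{H}_n)^{-2(n+1)},
$$
evaluate each factor separately with a few extra decimals, and only at the end round to the four displayed digits. No step is a genuine obstacle; the mildest concern is simply verifying that the numerical evaluation is tight enough to certify the stated upper bounds.
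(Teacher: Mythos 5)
Your proposal is correct and takes essentially the same route as the paper: specialize Proposition \ref{symmetrization} to $k=0$, insert Pansu's conjectured value of $I(\mathbb H_n)$, combine with $\mathcal W(\mathbb H_n)$ from the Weyl computation, and evaluate numerically using $j_{1,1},j_{2,1},j_{3,1}$. The only difference is cosmetic: the paper first simplifies the product of Gamma factors coming from $I(\mathbb H_n)$ into a single closed form in $Q=2n+2$ before plugging in numbers, whereas you keep the three factors separate, but both lead to the same three numerical bounds.
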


\begin{proof}
	We abbreviate $Q=2n+2$. The conjectured value for $I(\mathbb H_n)$ given above leads, via Proposition \ref{symmetrization}, to the bound on the Faber--Krahn constant
	\begin{align*}
		C^{\rm FK}(\mathbb H_n) 
		& \geq \frac{Q^{2\,\frac{Q-1}{Q}}(Q-2)^2 \, \Gamma(\tfrac{Q+1}{2})^\frac 2Q \pi^\frac{Q-1}{Q} 4^\frac 2Q}{(Q-1)^2\,\Gamma(\frac Q2)^\frac 4Q} 
		Q^{-2} j_{\frac{Q-2}{2},1}^2 \\
		& =
		\frac{(Q-2)^2 \, \Gamma(\tfrac{Q+1}{2})^\frac 2Q \pi^\frac{Q-1}{Q} 4^\frac 2Q}{Q^{\frac 2Q}\, (Q-1)^2\,\Gamma(\frac Q2)^\frac 4Q} j_{\frac{Q-2}{2},1}^2 \,.
	\end{align*}
	This leads to the bound on the Pleijel constant
	$$
	\gamma(\mathbb H_n) \leq (C^{\rm FK}(\mathbb H_n))^{-\frac Q2} \mathcal W(\mathbb H_n)^{-1} 
	= \frac{Q (Q-1)^Q\,\Gamma(\frac Q2)^2} {(Q-2)^Q \, \Gamma(\tfrac{Q+1}{2}) \pi^{\frac{Q-1}2} 4} \  j_{\frac{Q-2}{2},1}^{-Q} \ \mathcal W(\mathbb H_n)^{-1} \,.
	$$
Using Subsection \ref{sec:weylheis1}, which gives
$$
\mathcal W(\mathbb H) = \frac1{128}\,,\qquad 
\mathcal W(\mathbb H_2) = \frac{1}{48^2\pi}\,,\qquad
\mathcal W(\mathbb H_3) = \frac{1}{2^7 \cdot 768}\,\frac{12 - \pi^2}{\pi^2}\,,
$$
and the values \cite[Table 9.5]{AbSt}
$$ 
j_{1,1}\sim 3.8317\,,\qquad 
j_{2,1}\sim 5.1356\,,\qquad
j_{3,1}\sim 6.3802\,,
$$
we get \eqref{eq:panpl1} and  the corollary.
\end{proof}


\medskip

\subsection{A bound on the isoperimetric constant in $\mathbb H_n$}~\\
In this subsection we prove a lower bound on $I(\mathbb H_n)$ which is reasonably good in small dimensions $n=1$ and $n=2$. To state this bound, we need two constants,
\begin{equation}\label{eq:sobrepconst}
	\mathcal C_n := \frac{2^{n-3} n\Gamma(\frac n2)^2}{\pi^{n+1}}
\end{equation}
and, setting $Q=2n+2$,
\begin{align}\label{eq:bathtubconst}
	\mathcal C_n' & := Q^\frac1Q \left( \pi^n \frac{\Gamma(\frac{2Q-1}{2(Q-1)})}{\Gamma(\frac{2Q-1}{2(Q-1)}+\frac12)} \frac{\Gamma(\frac n2+\frac{Q}{4(Q-1)})}{\Gamma(\frac n2+\frac{Q}{4(Q-1)}+\frac12)}
	\frac{\Gamma(1+\frac{Q}{2(Q-1)})}{\Gamma(n+\frac{Q}{2(Q-1)})} \right)^\frac{Q-1}{Q}.
\end{align}
For $n=1$ we can write the latter constant without Gamma functions, see \eqref{eq:bathtubconst1} below. In any dimension it can easily be evaluated numerically.

These two constants appear in our bound on the isoperimetric constant. More precisely, we will prove the following theorem.

\begin{theorem}\label{isoper}
	For any $n\in\N$,
	$$
	I(\mathbb H_n) \geq \mathcal C_n^{-1} (\mathcal C_n')^{-1} \,.
	$$
\end{theorem}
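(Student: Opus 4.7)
The plan is to derive the isoperimetric inequality via an $L^1$-Sobolev argument using Folland's fundamental solution of $-\Delta_{\mathbb H_n}$ and the Hardy--Littlewood bathtub principle. On the Carnot group $\mathbb H_n$ the isoperimetric inequality $\per_{\mathbb H_n}(E) \ge C^{-1}|E|^{(Q-1)/Q}$ is equivalent, with the same constant, to the $L^1$-Sobolev inequality $\|u\|_{Q/(Q-1)} \le C \|\,|\nabla_{\mathbb H_n} u|\,\|_1$ for $u \in C^1_c(\mathbb H_n)$, so I aim to prove the latter with $C = \mathcal C_n \mathcal C_n'$.

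The key tool is Folland's formula $G(\xi,\eta) = c_n\,N(\eta^{-1}\xi)^{-(Q-2)}$, with $N(x,y,z)=\bigl((|x|^2+|y|^2)^2+z^2\bigr)^{1/4}$ the Kor\'anyi gauge and $c_n$ the explicit normalization constant matching~\eqref{Otherformulation}. A direct computation gives $|\nabla_{\mathbb H_n} N|=\rho/N$ with $\rho=\sqrt{|x|^2+|y|^2}$, hence $|\nabla_{\mathbb H_n,\eta}G(\xi,\eta)| = c_n(Q-2)\,K_0(\eta^{-1}\xi)$ with $K_0(\xi) := \rho(\xi)/N(\xi)^Q$. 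From $-\Delta_{\mathbb H_n,\eta}G(\xi,\cdot) = \delta_\xi$ and integration by parts I would obtain the Riesz-type representation $u(\xi) = \int\nabla_{\mathbb H_n,\eta} G(\xi,\eta)\cdot\nabla_{\mathbb H_n} u(\eta)\,d\eta$ for $u\in C^1_c$, and the pointwise bound $|u(\xi)| \le c_n(Q-2) \int K_0(\eta^{-1}\xi)|\nabla_{\mathbb H_n}u(\eta)|\,d\eta$. Applied to smooth mollifications of $\mathbf 1_E$ for $E$ of finite perimeter, passing to the limit, integrating over $E$, and using Fubini with the left-invariance of $K_0(\eta^{-1}\cdot)$, one arrives at
\[
|E| \le c_n(Q-2) \int_{\mathbb H_n} \Bigl(\int_{\eta^{-1}E} K_0(\tau)\,d\tau\Bigr)\, d|\nabla_{\mathbb H_n}\mathbf 1_E|(\eta),
\]
and the Hardy--Littlewood rearrangement inequality dominates the inner integral uniformly in $\eta$ by $\int_0^{|E|}K_0^*(s)\,ds$, where $K_0^*$ is the decreasing rearrangement.

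The rearrangement $K_0^*$ is then computed via Kor\'anyi polar coordinates. Writing $\xi = \delta_r(\sigma)$ with $r=N(\xi)\in(0,\infty)$ and $\sigma\in\Sigma:=\{N=1\}$, the $-(Q-1)$-homogeneity of $K_0$ gives $K_0(\xi) = \tilde\rho(\sigma)/r^{Q-1}$ with $\tilde\rho:=\rho|_\Sigma$, so $|\{K_0 > t\}| = Q^{-1}t^{-Q/(Q-1)} \int_\Sigma \tilde\rho^{Q/(Q-1)}\,d\sigma$. Parametrising $\Sigma$ by $\omega\in\mathbb S^{2n-1}$ and $\alpha = z/r^2 \in[-1,1]$, the induced surface measure is $(1-\alpha^2)^{n/2-1}\,d\alpha\,d\omega$, reducing the $\Sigma$-integral to a single Beta evaluation $\int_{-1}^1(1-\alpha^2)^{a-1}d\alpha$ with $a=\tfrac n2+\tfrac{Q}{4(Q-1)}$, yielding the Gamma-ratio $\Gamma(a)/\Gamma(a+\tfrac12)$. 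Consequently $K_0^*(s) = C_*^{(Q-1)/Q}s^{-(Q-1)/Q}$ and $\int_0^V K_0^*(s)\,ds = Q\,C_*^{(Q-1)/Q} V^{1/Q}$ for an explicit $C_*$, so that $\per_{\mathbb H_n}(E) \ge [c_n(Q-2)\,Q\,C_*^{(Q-1)/Q}]^{-1}|E|^{(Q-1)/Q}$.

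The main obstacle is the bookkeeping required to identify the resulting constant with $\mathcal C_n\mathcal C_n'$. The combinatorial factor $c_n(Q-2)$ should be matched with $\mathcal C_n$ via the explicit Folland constant for the normalization~\eqref{Otherformulation}. The factor $Q\,C_*^{(Q-1)/Q} = Q^{1/Q}(QC_*)^{(Q-1)/Q}$ should be matched with $\mathcal C_n'$: the Beta evaluation already accounts for the second of its three Gamma-ratios, and re-expressing $|\mathbb S^{2n-1}|=2\pi^n/\Gamma(n)$ together with the prefactor $Q$ as the product of the remaining two Gamma-ratios uses the Legendre duplication formula and the identity $(2Q-1)/(2(Q-1))+\tfrac12 = 1+Q/(2(Q-1))$, which makes the denominator of the first ratio coincide with the numerator of the third and thereby produces a cancellation. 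A subtle point is that the pointwise Cauchy--Schwarz step $|\nabla G\cdot\nabla u|\le|\nabla G|\,|\nabla u|$ may need to be sharpened by exploiting the vectorial structure of $\nabla_{\mathbb H_n}G$, in order to recover the constant $\mathcal C_n'$ exactly. The passage from the pointwise bound on smooth functions to the reduced boundary of $E$ is standard in the BV theory on Carnot groups; cf.~\cite{GaNh}.
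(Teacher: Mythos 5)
Your overall strategy---Folland's representation formula followed by a rearrangement (bathtub) estimate---is the same backbone as the paper's proof, and the key computational lemma you would need is exactly Lemma~\ref{bathtub}. However, the ``subtle point'' you flag at the end is a genuine gap, not a matter of bookkeeping, and the Gamma-ratio cancellation you outline does not close it.

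The problem is the pointwise Cauchy--Schwarz step $|\nabla_{\mathbb H_n}G\cdot\nabla_{\mathbb H_n}u|\leq|\nabla_{\mathbb H_n}G|\,|\nabla_{\mathbb H_n}u|$, which replaces the vector-valued kernel $\nabla_{\mathbb H_n}\|\cdot\|\,/\,\|\cdot\|^{Q-1}$ by its scalar length $K_0=\sqrt{|x|^2+|y|^2}\,/\,\|\cdot\|^{Q}$ \emph{before} any rearrangement is applied. When the bathtub principle is then applied to the everywhere-positive kernel $K_0$, the resulting constant is $Q^{1/Q}A^{(Q-1)/Q}$ with $A$ the integral of $(\sqrt{|x|^2+|y|^2})^{Q/(Q-1)}$ over the Kor\'anyi unit sphere. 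In Lemma~\ref{bathtub}, by contrast, the bathtub principle is applied \emph{before} Cauchy--Schwarz, to the \emph{signed} directional kernel
$$
\xi\mapsto\sum_{j=1}^n\frac{a_j(X_j\|\cdot\|)(\xi)+b_j(Y_j\|\cdot\|)(\xi)}{\|\xi\|^{Q-1}}
$$
for a fixed direction $(a,b)=\nabla_{\mathbb H_n}u(\eta)$; the extremal constant is independent of $(a,b)$ by rotation invariance, so there is no loss in freezing the direction. The optimal set is the superlevel set of a function that changes sign, producing the factors $\omega_n$ and $\sin_+(\theta+\phi_n)$ in the paper's integrand (both at most one, the latter vanishing on half the range). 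Thus $\mathcal C_n'=Q^{1/Q}B^{(Q-1)/Q}$ with a strictly smaller base $B<A$.

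The deficit is substantial. After the cancellation you correctly identify via $\tfrac{2Q-1}{2(Q-1)}+\tfrac12=1+\tfrac{Q}{2(Q-1)}$, one computes
$$
\frac{B}{A}=\frac{(n-1)!}{2\sqrt\pi}\,\frac{\Gamma\!\left(\tfrac{2Q-1}{2(Q-1)}\right)}{\Gamma\!\left(n+\tfrac{Q}{2(Q-1)}\right)}\,,
$$
which for $n=1$, $Q=4$ equals $\tfrac{1}{2\sqrt\pi}\,\Gamma(\tfrac76)/\Gamma(\tfrac53)\approx 0.29$, not $1$ (and it tends to $0$ as $n\to\infty$). Your argument therefore produces a constant roughly $(A/B)^{3/4}\approx 2.5$ times larger than $\mathcal C_1'$, which after inversion gives $I(\mathbb H)\geq\mathcal C_1^{-1}C^{-1}\approx 1.2$---weaker even than Pansu's original bound $\approx 1.70$, let alone the paper's $\approx 3.09$. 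The necessary fix is structural, not computational: the bathtub optimization must be carried out on the full signed kernel for each fixed direction, exactly as in Lemma~\ref{bathtub}, and only then does one exploit rotational invariance to obtain a direction-independent constant.
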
	


We will prove that for any measurable set $E\subset\mathbb H_n$
\begin{equation}
	\label{eq:isopersob}
	\int_E u(\zeta)\,d\zeta \leq \mathcal C_n \mathcal C_n' |E|^\frac1Q \int_{\mathbb H_n} |\nabla_{\mathbb H_n} u(\zeta)|\,d\zeta \,.
\end{equation}
Here and below, we denote variables in $\mathbb H_n$ by $\zeta$ and we write
$$
|\nabla_{\mathbb H_n} u(\zeta)| := \sqrt{ \sum_{j=1}^n ((X_ju)(\zeta)^2 +(Y_ju)(\zeta)^2) } \,.
$$
If we prove this inequality for, say, Lipschitz functions $u$ with compact support, then, by a standard approximation argument, it will extend, with the same constant, to all functions $g$ of bounded variation satisfying $|\{|u|>\lambda\}|<\infty$ for all $\lambda>0$. (The bounded variation condition is understood in the Heisenberg sense, as discussed, for instance, in \cite[Section 5.1]{CaDaPaTy}.) In particular \eqref{eq:isopersob} is valid for the characteristic function of a set of finite (horizontal) perimeter. Since
$$
\int_{\mathbb H_n} |\nabla_{\mathbb H_n} \1_E(\zeta)|\,d\zeta = \per_{\mathbb H_n} E
\qquad\text{and}\qquad
\int_E \1_E(\zeta)\,d\zeta = |E| \,,
$$
we obtain the inequality stated in the theorem. Thus, it remains to prove \eqref{eq:isopersob}.

Our analysis is based on the following well-known representation formula, where we use the notation
$$
\| (x,y,z) \| = ((|x|^2+|y|^2)^2 + z^2)^\frac14 
\qquad\text{for}\ (x,y,z)\in\mathbb H_n \,.
$$

\begin{lemma}\label{folland}
	With $\mathcal C_n$ from \eqref{eq:sobrepconst},
	$$
	u(\zeta) = \mathcal C_n \sum_{j=1}^n \int_{\mathbb H_n} \frac{(X_j\|\cdot\|)(\zeta^{-1} \eta) (X_ju)(\eta) + (Y_j\|\cdot\|)(\zeta^{-1}\eta) (Y_ju)(\eta)}{\|\zeta^{-1} \eta\|^{Q-1}}\,d\eta \,.
	$$
\end{lemma}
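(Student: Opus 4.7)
The plan is to derive the representation formula from Folland's explicit fundamental solution for the sub-Laplacian on $\mathbb H_n$ combined with integration by parts.

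First I would invoke Folland's theorem \cite{Fo0}: for an explicit constant $\kappa_n$, the distribution $\Gamma(\zeta) := \kappa_n \|\zeta\|^{-(Q-2)}$ (with $Q=2n+2$) is the fundamental solution of the sub-Laplacian, i.e.\ satisfies $-\Delta^{\mathbb H_n}\Gamma = \delta_0$ in the sense of distributions. Since $-\Delta^{\mathbb H_n}$ is left-invariant and the homogeneous norm satisfies $\|\zeta^{-1}\|=\|\zeta\|$ (so that $\Gamma(\zeta^{-1}\eta) = \Gamma(\eta^{-1}\zeta)$), one obtains for every $u \in C^\infty_c(\mathbb H_n)$ the reproducing identity
$$
u(\zeta) = \int_{\mathbb H_n} (-\Delta^{\mathbb H_n} u)(\eta)\, \Gamma(\zeta^{-1}\eta)\, d\eta \,.
$$

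Next, a direct check shows that the left-invariant vector fields $X_j$ and $Y_j$ are divergence-free with respect to the Lebesgue (Haar) measure on $\mathbb H_n$, whence $X_j^\star = -X_j$ and $Y_j^\star = -Y_j$. Expanding $-\Delta^{\mathbb H_n} u = -\sum_j (X_j^2 u + Y_j^2 u)$ and integrating by parts once in $\eta$ (boundary terms vanish by compact support), one obtains
$$
u(\zeta) = \sum_{j=1}^n \int_{\mathbb H_n} \bigl[ (X_j u)(\eta)\, X_j^\eta\Gamma(\zeta^{-1}\eta) + (Y_j u)(\eta)\, Y_j^\eta\Gamma(\zeta^{-1}\eta) \bigr]\, d\eta \,.
$$

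The key third step exploits the left-invariance: because the map $\eta \mapsto \zeta^{-1}\eta$ is left-translation by $\zeta^{-1}$, one has $X_j^\eta[\Gamma(\zeta^{-1}\eta)] = (X_j\Gamma)(\zeta^{-1}\eta)$, and similarly for $Y_j$. The chain rule applied to $\Gamma=\kappa_n\|\cdot\|^{-(Q-2)}$ yields
$$
(X_j\Gamma)(w) = -\kappa_n(Q-2)\,\|w\|^{-(Q-1)} (X_j\|\cdot\|)(w)
$$
and the analogous formula for $Y_j$. Substituting into the previous display and factoring out the common scalar produces exactly the formula claimed in the lemma, with $\mathcal C_n$ identified with $\kappa_n(Q-2)$ up to the overall sign fixed by the sign convention for the Green's function.

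The main technical obstacle is the identification of the precise numerical value $\mathcal C_n = 2^{n-3} n\, \Gamma(n/2)^2/\pi^{n+1}$. This requires plugging in Folland's explicit value for $\kappa_n$ computed relative to the normalization used here, namely $X_j = \partial_{x_j} + 2y_j\partial_z$, $Y_j = \partial_{y_j} - 2x_j\partial_z$ (so that $[X_j,Y_j]=-4\partial_z$), and the gauge $\|(x,y,z)\| = ((|x|^2+|y|^2)^2+z^2)^{1/4}$. Since Folland's original paper uses a different scaling, the factors of $2$ that appear must be tracked carefully. Combined with $Q-2=2n$ and the Legendre duplication formula, this yields the stated value of $\mathcal C_n$.
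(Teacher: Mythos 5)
Your proof takes essentially the same route as the paper's: start from Folland's fundamental solution $\kappa_n\|\cdot\|^{2-Q}$ (the paper quotes the representation formula directly, while you derive it from $-\Delta\Gamma=\delta$ and the symmetry $\|\zeta^{-1}\|=\|\zeta\|$), integrate by parts once using $X_j^\star=-X_j$, and compute $X_j\Gamma$ via left-invariance plus the chain rule for the homogeneous gauge. Your caveat about the overall sign is well taken---a careful trace of the signs actually produces the prefactor $\tilde{\mathcal C}_n(2-Q)<0$ rather than $+\mathcal C_n$, a minor slip that is also present in the paper's own derivation and is immaterial for the subsequent bathtub argument, which is invariant under $(a,b)\mapsto(-a,-b)$.
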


\begin{proof}
	As shown by Folland \cite{Fo0},
	$$
	u(\zeta) = - \tilde{\mathcal C}_n \sum_{j=1}^n \int_{\mathbb H_n} \|\zeta^{-1} \eta\|^{2-Q} ((X_j^2+Y_j^2) u)(\eta)\,d\eta
	$$
	with
	$$
	\tilde{\mathcal C}_n = \frac{2^{n-4}\Gamma(\frac n2)^2}{\pi^{n+1}} \,.
	$$
	Here $\zeta^{-1}\eta$ denotes the inverse of $\zeta$ composed with $\eta$ in the sense of the Heisenberg group. The explicit expression for $\tilde{\mathcal C}_n$ can be found, for instance, in \cite[Eq. between (1.2) and (1.3)]{BrFoMo} or \cite[Theorem 1.2]{CoLu}. (Meanwhile, there seems to be a computational error in the constant in \cite[Theorem 5.15]{CaDaPaTy}.) 
	
	Integrating by parts, we find
	\begin{align*}
		& - \int_{\mathbb H_n} \|\zeta^{-1} \eta\|^{2-Q} ((X_j^2+Y_j^2) u)(\eta)\,d\eta \\
		& \quad =  \int_{\mathbb H_n} \left( (X_j \|\zeta^{-1} \eta\|^{2-Q}) (X_j u)(\eta) + ( Y_j \|\zeta^{-1} \eta\|^{2-Q}) (Y_j u)(\eta) \right) d\eta \,.
	\end{align*}
	On the right side, the vector fields $X_j$ and $Y_j$ act with respect to the $\eta$ variable. By left invariance, we find
	$$
	X_j \|\zeta^{-1} \eta\|^{2-Q} = (X_j \|\cdot\|^{2-Q})(\zeta^{-1}\eta) = (2-Q)\|\zeta^{-1}\eta\|^{1-Q} (X_j\|\cdot\|)(\zeta^{-1}\eta)
	$$
	and similarly for $Y_j$. This proves the claimed formula.	
\end{proof}

We deduce from Lemma \ref{folland} that for any measurable set $E\subset\mathbb H_n$,
$$
\int_E u(\zeta)\,d\zeta =  \mathcal C_n \int_{\mathbb H_n} \sum_{j=1}^n \int_E \frac{(X_j\|\cdot\|)(\zeta^{-1} \eta) (X_ju)(\eta) + (Y_j\|\cdot\|)(\zeta^{-1}\eta) (Y_ju)(\eta)}{\|\zeta^{-1} \eta\|^{Q-1}}\,d\zeta \,d\eta \,.
$$
Here we used Fubini's theorem. The arguments given in Lemma \ref{bathtub} below show that the integrals are absolutely convergent, so the interchange of integrals is justified.

Our goal is to bound
$$
\sum_{j=1}^n \int_E \frac{(X_j\|\cdot\|)(\zeta^{-1} \eta) (X_ju)(\eta) + (Y_j\|\cdot\|)(\zeta^{-1}\eta) (Y_ju)(\eta)}{\|\zeta^{-1} \eta\|^{Q-1}}\,d\zeta
$$
from above, for any fixed $\eta$. \\ Abbreviating $a_j:=(X_ju)(\eta)$, $b_j:=(Y_ju)(\eta)$ and  $F:=E^{-1}\eta=\{ \zeta^{-1} \eta:\ \zeta\in E \}$, we need to bound
$$
\sum_{j=1}^n \int_F \frac{a_j (X_j\|\cdot\|)(\xi) + b_j (Y_j\|\cdot\|)(\xi)}{\|\xi\|^{Q-1}}\,d\xi
$$
from above. Note that 
$$
|F|=|E^{-1} \eta|
\qquad\text{and}\qquad
|a|^2+|b|^2 = |\nabla_{\mathbb H_n} g(v)|^2 \,.
$$

\begin{lemma}\label{bathtub}
	For any $a,b\in\R^n$ and any measurable $F\subset\mathbb H_n$,
	$$
	\sum_{j=1}^n \int_F \frac{a_j (X_j\|\cdot\|)(\xi) + b_j (Y_j\|\cdot\|)(\xi)}{\|\xi\|^{Q-1}}\,d\xi
	\leq \mathcal C_n' |F|^\frac{1}{Q} (|a|^2+|b|^2)^\frac12
	$$
	with $\mathcal C_n'$ from \eqref{eq:bathtubconst}.
\end{lemma}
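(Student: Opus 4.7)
The plan is to rewrite the left-hand side as an $\R^{2n}$ inner product and apply Cauchy--Schwarz to the \emph{integrated} vector-valued quantity, so as to retain the cancellation that a pointwise bound would discard. Setting
\[
W(\xi):=\frac{\bigl((X_j\|\cdot\|)(\xi),(Y_j\|\cdot\|)(\xi)\bigr)_{j=1}^n}{\|\xi\|^{Q-1}}\in\R^{2n},
\]
the integral in question is $(a,b)\cdot\int_F W\,d\xi$, which by Cauchy--Schwarz in $\R^{2n}$ is bounded by $(|a|^2+|b|^2)^{1/2}\bigl|\int_F W\,d\xi\bigr|$. By duality $\bigl|\int_F W\,d\xi\bigr|=\sup_{|\omega|=1}\int_F g_\omega\,d\xi$ with $g_\omega:=\omega\cdot W$, so it suffices to prove $\sup_{|F|=V}\sup_{|\omega|=1}\int_F g_\omega\,d\xi\leq\mathcal C_n'\,V^{1/Q}$.

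Next I would exploit the action of $U(n)$ on $\mathbb H_n$ by $(x+iy,z)\mapsto(U(x+iy),z)$: this preserves $\|\cdot\|$ and Lebesgue measure, and the induced action on $\mathrm{span}\{X_j,Y_j\}$ is transitive on the unit sphere of $\R^{2n}$. Consequently $\sup_{|F|=V}\int_F g_\omega\,d\xi$ is independent of $\omega$, and I may fix $\omega=e_1$, giving the explicit scalar
\[
g(\xi)=\frac{(X_1\|\cdot\|)(\xi)}{\|\xi\|^{Q-1}}=\frac{rx_1+y_1z}{\|\xi\|^{Q+2}},\qquad r:=|x|^2+|y|^2,
\]
where the identity $X_j\|\xi\|=(rx_j+y_jz)/\|\xi\|^3$ comes by direct differentiation of $\|\xi\|^4=r^2+z^2$. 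The function $g$ is homogeneous of degree $1-Q$ under the anisotropic dilations $\delta_\lambda(x,y,z)=(\lambda x,\lambda y,\lambda^2z)$, while Lebesgue measure scales as $\lambda^Q$; hence $|\{g>s\}|=s^{-Q/(Q-1)}|\{g>1\}|$, and the bathtub principle delivers
\[
\sup_{|F|=V}\int_F g\,d\xi = Q\,|\{g>1\}|^{(Q-1)/Q}\,V^{1/Q}.
\]

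It remains to evaluate $|\{g>1\}|$. I would introduce polar coordinates $x_1=\rho_1\cos\theta$, $y_1=\rho_1\sin\theta$ in the first horizontal pair; reduce the integration over the remaining pairs $(x_j,y_j)_{j\geq2}$ to the radial variable $\rho_2=\bigl(\sum_{j\geq2}(x_j^2+y_j^2)\bigr)^{1/2}$ (contributing the measure factor $2\pi^{n-1}\rho_2^{2n-3}/\Gamma(n-1)\,d\rho_2$); and then pass to the ``gauge-polar'' coordinates $r=R\cos\phi$, $z=R\sin\phi$ with $R=\|\xi\|^2$. In these coordinates the condition $g>1$ becomes the clean inequality $R^{Q/2}<\rho_1\cos(\phi-\theta)$, and the compatibility constraint $\rho_2^2=R\cos\phi-\rho_1^2\geq0$ is enforced by the change of variable. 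After performing the innermost $R$ integration, the remaining iterated integral separates (after suitable reordering and elementary substitutions) into three one-dimensional factors, each of the form $\int_{-\pi/2}^{\pi/2}\cos^\alpha\psi\,d\psi=\sqrt\pi\,\Gamma(\tfrac{\alpha+1}{2})/\Gamma(\tfrac\alpha2+1)$ or a beta integral $\int_0^1 t^\alpha(1-t)^\beta\,dt$. Collecting the resulting three $\Gamma$-ratios is designed to reproduce the expression \eqref{eq:bathtubconst} for $\mathcal C_n'$.

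The main obstacle I anticipate is the geometric bookkeeping in this last step: one must verify that after the changes of variables, the combined constraints $\rho_1^2/\cos\phi\leq R<(\rho_1\cos(\phi-\theta))^{2/Q}$ and the measure factor $(R\cos\phi-\rho_1^2)^{n-2}$ interact so cleanly that the triple integral actually decouples into the three independent beta integrals that the statement of $\mathcal C_n'$ predicts; in particular that no entangled mixed terms survive and that the $\Gamma(n+\tfrac{Q}{2(Q-1)})$ factor really arises as the normalization of the $\rho_2$ (or equivalently $\rho_1$) integration.
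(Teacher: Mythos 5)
Your reduction to computing $\sup_{|F|=V}\int_F g\,d\xi$ is sound and is in essence the paper's argument reorganized: the paper applies the bathtub principle directly to the integral with $(a,b)$ in place, then uses the $U(n)\times O(2)$-symmetry to conclude that the result depends only on $|a|^2+|b|^2$ and reduces to $a=\alpha e_n$, $b=0$; you instead apply Cauchy--Schwarz in $\R^{2n}$ first and use the same symmetry to fix $\omega$. These are equivalent, and your formula $\sup_{|F|=V}\int_F g\,d\xi = Q\,|\{g>1\}|^{(Q-1)/Q}V^{1/Q}$ is correct. The substantive deviation --- and the genuine gap --- is exactly where you flag it: your coordinates do not produce the clean three-factor separation you anticipate. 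With $\rho_1,\theta$ polar in the $(x_1,y_1)$-plane, $\rho_2$ the radius in the remaining horizontal variables, and $R=\|\xi\|^2$, the Jacobian gives $d\rho_2\,dz=\tfrac{R}{2\rho_2}\,dR\,d\phi$, so the innermost integral is $\int_{\rho_1^2/\cos\phi}^{(\rho_1\cos(\phi-\theta))^{2/Q}}(R\cos\phi-\rho_1^2)^{n-2}R\,dR$, which is not a pure power of $R$ and evaluates to a two-term expression $\tfrac1{\cos^2\phi}\bigl(\tfrac{U^n}{n}+\tfrac{\rho_1^2U^{n-1}}{n-1}\bigr)$ with $U=(\rho_1\cos(\phi-\theta))^{2/Q}\cos\phi-\rho_1^2$. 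The $\theta$- and $\phi$-integrals do decouple after the scaling $\rho_1=(\cos_+(\phi-\theta)^{2/Q}\cos\phi)^{Q/(2(Q-1))}\sigma$ (the $\theta$-integral is over a full period and can be shifted), producing the expected $\cos^{Q/(Q-1)}$ and $\cos^{2n^2/(2n+1)}$ factors, but the residual $\sigma$-integral is a \emph{sum} of two beta integrals, not a single one, and proving that this sum equals $\tfrac{2\Gamma(n-1)}{Q}\,\Gamma\bigl(1+\tfrac{Q}{2(Q-1)}\bigr)/\Gamma\bigl(n+\tfrac{Q}{2(Q-1)}\bigr)$ is a further nontrivial identity that your write-up does not supply. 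Your angular weight $2\pi^{n-1}\rho_2^{2n-3}/\Gamma(n-1)\,d\rho_2$ also degenerates at $n=1$, which would need separate treatment.

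The reason the paper's computation goes through smoothly is its choice of coordinates $x_j=r\sqrt{\sin\theta}\cos\phi_j\,\omega_j$, $y_j=r\sqrt{\sin\theta}\sin\phi_j\,\omega_j$, $t=r^2\cos\theta$ with $r=\|\xi\|$: this places \emph{all} anisotropic homogeneity into the single radial variable $r$, whose weight $r^{Q-1}\,dr$ is a pure power, so the radial integration over the superlevel set yields $\tfrac1Q(\text{angular})^{Q/(Q-1)}$ directly, and the remaining angular integrals then factor without any auxiliary identity. Your variables $R$ and $\rho_1$ are both homogeneous under $\delta_\lambda$, and this is precisely what entangles the measure factor with the constraint and spoils the decoupling.
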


Let us accept this lemma for the moment and use it to complete the proof of \eqref{eq:isopersob}. According to the above argument, we find, for any $\eta\in\mathbb H_n$,
$$
\sum_{j=1}^n \int_E \frac{(X_j\|\cdot\|)(\zeta^{-1} \eta) (X_ju)(\eta) + (Y_j\|\cdot\|)(\zeta^{-1}\eta) (Y_ju)(\eta)}{\|\zeta^{-1} \eta\|^{Q-1}}\,d\zeta
\leq \mathcal C_n' |E|^\frac1Q |\nabla_{\mathbb H_n} u(\eta)| \,.
$$
Consequently, we have shown that
$$
\int_E u(\zeta)\,d\zeta \leq \mathcal C_n \mathcal C_n' |E|^\frac1Q \int_{\mathbb H_n} |\nabla_{\mathbb H_n} u(\eta)| \,d\eta \,.
$$
This is the claimed inequality \eqref{eq:isopersob}. It remains to prove Lemma \ref{bathtub}.


\begin{proof}[Proof of Lemma \ref{bathtub}]
Let us abbreviate
$$	
I[F] := \sum_{j=1}^n \int_F \frac{a_j (X_j\|\cdot\|)(\xi) + b_j (Y_j\|\cdot\|)(\xi)}{\|\xi\|^{Q-1}}\,d\xi \,.
$$
We think of the inequality in the lemma as an optimization problem, where we want to maximize $I[F]$ among all sets $F$ of a given measure. By homogeneity the value of this fixed measure is irrelevant.

We know by the bathtub principle (see, for instance, \cite[Theorem 1.14]{LiLo}) that there is an optimal set $F_*$ for the inequality and that this optimal set is given by
$$
F_* = \Big\{ \xi \in \mathbb H_n :\  \sum_{j=1}^n \frac{a_j (X_j\|\cdot\|)(\xi) + b_j (Y_j\|\cdot\|)(\xi)}{\|\xi\|^{Q-1}} >\kappa \Big\}
$$
for some $\kappa>0$.\\
 In fact, as already mentioned, by homogeneity, the value of $\kappa$ is immaterial and we will set it equal to $1$ in what follows.
Thus, we have, for any set $F\subset\mathbb H_n$,
$$
\frac{I[F]}{|F|^\frac1Q} \leq \frac{I[F_*]}{|F_*|^\frac1Q}\,,
$$
and our task is to compute the right side. More precisely, we want to show that
$$
\frac{I[F_*]}{|F_*|^\frac1Q} = \mathcal C_n' (|a|^2 + |b|^2)^\frac12
$$
with $\mathcal C_n'$ from \eqref{eq:bathtubconst}.

We begin by bringing $I[F]$ in a more explicit form. We compute, writing $\xi=(x,y,t)$,
$$
(X_j\|\cdot\|)(\xi) = \|\xi\|^{-3} \left( (|x|^2+|y|^2) x_j + y_j t \right),
\
(Y_j\|\cdot\|)(\xi) = \|\xi\|^{-3} \left( (|x|^2+|y|^2) y_j - x_j t \right),
$$
so
\begin{align*}
	& \frac{a_j (X_j\|\cdot\|)(\xi) + b_j (Y_j\|\cdot\|)(\xi)}{\|\xi\|^{Q-1}} \\
	& = \frac{(|x|^2+|y|^2)(a\cdot x) + (a\cdot y)t + (|x|^2+|y|^2)(b\cdot y) - (b\cdot x)t}{\|\xi\|^{Q+2}} \,.
\end{align*}
In particular,	
\begin{align*}
	I[F] = \int_F \frac{(|x|^2+|y|^2)(a\cdot x) + (a\cdot y)t + (|x|^2+|y|^2)(b\cdot y) - (b\cdot x)t}{\|\xi\|^{Q+2}}\,d\xi
\end{align*}
and
$$
F_* = \left\{ \xi\in\mathbb H_n:\ \frac{(|x|^2+|y|^2)(a\cdot x) + (a\cdot y)t + (|x|^2+|y|^2)(b\cdot y) - (b\cdot x)t}{\|\xi\|^{Q+2}} > 1 \right\}.
$$

We claim that $|F_*|$ and $I[F_*]$ depend on $a$ and $b$ only through $|a|^2+|b|^2$. Indeed, from the above expression it is obvious that the two quantities do not change if $a$ and $b$ are simultaneously rotated by the same rotation matrix. This implies that the two quantities depend on $a$ and $b$ only through $|a|$, $|b|$ and $a\cdot b$. We also see that the two quantities do not change if we apply a two-dimensional rotation to $(a_j,b_j)$. This implies that the two quantities depend on $a_j$ and $b_j$ only through $a_j^2+b_j^2$. This proves the claim.

As a consequence, we may set $\alpha:=\sqrt{|a|^2+|b|^2}$ and assume that $a=\alpha e_n$ and $b=0$. Thus, we have
$$
F_* = \Big\{ \xi \in\mathbb H_n:\ \alpha \frac{(|x|^2+|y|^2)x_n + y_n t}{\|\xi\|^{Q+2}} >1 \Big\}
$$
and we need to compute
$$
|F_*| 
\qquad\text{and}\qquad
I[F_*] = \int_{F_*} \alpha \frac{(|x|^2+|y|^2)x_n + y_n t}{\|\xi\|^{Q+2}} \,d\xi \,.
$$

To perform these computations, we introduce coordinates
$$
x_j = r \sqrt{\sin\theta} \cos\phi_j \,\omega_j \,,
\qquad
y_j = r \sqrt{\sin\theta} \sin\phi_j \, \omega_j \,,
\qquad
t = r^2 \cos\theta \,,
$$
where $r>0$, $\theta\in(0,\pi)$, $\phi=(\phi_1,\ldots,\phi_n)\in(-\pi,\pi)^n=:T^n$ and $\omega \in\Sph^{n-1}\cap(0,\infty)^n=:\Sigma$. We claim that in this parametrization, the measure is given by
\begin{equation}
	\label{eq:measure}
	dx\,dy\,dt = r^{Q-1}dr\,\sin^{n-1}\theta\, d\theta\,d\mu(\omega)\,d\phi \,,
\end{equation}
where $d\phi$ is standard Lebesgue measure on $T^n$ and where $d\mu(\omega) = \omega_1\cdots\omega_n\,d\omega$ with the standard surface measure $d\omega$ on $\Sph^{n-1}$. More explicitly, if we parametrize $\omega\in\Sigma$ by
\begin{align*}
	\omega_1 & = \sin\theta_{n-1} \sin\theta_{n-2} \cdots \sin\theta_1 \,,\\
	\omega_2 & = \sin\theta_{n-1} \sin\theta_{n-2} \cdots \cos\theta_1 \,,\\
	\ldots \\
	\omega_{n-1} & = \sin\theta_{n-1} \cos\theta_{n-2} \,,\\
	\omega_n & = \cos\theta_{n-1} \,,
\end{align*}
with $0<\theta_j<\frac\pi 2$, then
$$
d\mu(\omega) = \omega_1\cdots\omega_n\,d\omega = \prod_{j=1}^{n-1} \sin^{2j-1} \theta_j\cos\theta_j\,d\theta_j \,.
$$
The latter formula follows immediately from $d\omega = \prod_{j=1}^{n-1} \sin^{j-1}\theta_j\,d\theta_j$.

We point out that when $n=1$, then $\Sigma=\{1\}$ and $\mu$ is trivial in the sense that $\mu(\{1\})=1$.

Let us provide the details for formula \eqref{eq:measure}. First, for each $j$, we introduce polar coordinates
$$
x_j = r_j \cos\phi_j \,,
\qquad
y_j = r_j \sin\phi_j \,,
$$
and note that
$$
dx_j\,dy_j = r_j dr_j \,d\phi_j \,.
$$
Next, we consider $(r_1,\ldots,r_n)$ as an element of $\R^n$ and introduce hyperspherical coordinates in $\R^n$,
$$
r_j = \rho \omega_j
$$
with $\rho>0$ and $\omega=(\omega_1,\ldots,\omega_n)\in\Sph^{n-1}\cap(0,\infty)^n=\Sigma$. We have
$$
dr_1\cdots dr_n = \rho^{n-1}d\rho\,d\omega \,,
$$
so
$$
dx\,dy = \rho^{2n-1} \, d\rho\, \omega_1\cdots\omega_n \,d\omega = \rho^{2n-1}\,d\rho\, d\mu(\omega) \,.
$$
Finally, we set
$$
\rho = r \sqrt{\sin\theta} \,,
\qquad
t= r^2 \cos\theta
$$
and compute easily
$$
d\rho\,dt = r^2 (\sin\theta)^{-\frac12}\,dr\,d\theta \,.
$$
Inserting this into the above formual for $dx\,dy$, we obtain the claimed formula \eqref{eq:measure}.

In these coordinates we have
$$
\frac{(|x|^2+|y|^2)x_n + y_n t}{\|\xi\|^{Q+2}} = \sqrt{\sin\theta}\ \omega_n\, \frac{\sin\theta \cos\phi_n + \cos\theta \sin\phi}{r^{Q-1}} = \frac{\sqrt{\sin\theta}\ \omega_n\, \sin(\theta+\phi_n)}{r^{Q-1}} \,.
$$
In particular, the constraint $\alpha \frac{(|x|^2+|y|^2)x_n + y_n t}{\|\xi\|^{Q+2}}>1$ can be written as
$$
r< \left( \alpha \sqrt{ \sin\theta}\, \omega_n \sin_+(\theta+\phi_n) \right)^\frac{1}{Q-1}.
$$
This allows us to carry out the $r$-integration and to obtain
\begin{align*}
	|F_*| & = \int_{T^n} d\phi \int_\Sigma d\mu(\omega) \int_0^\pi d\theta \,\sin^{n-1}\theta
	\int_0^\infty dr \, r^{Q-1} \1_F \\
	& = \alpha^\frac{Q}{Q-1} \frac1Q \int_{T^n} d\phi \int_\Sigma d\mu(\omega) \int_0^\pi d\theta \,\sin^{n-1}\theta \left( \sqrt{ \sin\theta}\, \omega_n \sin_+(\theta+\phi_n) \right)^\frac{Q}{Q-1}
\end{align*}
and
\begin{align*}
	I[F_*] & = \alpha \int_{T^n} d\phi \int_\Sigma d\mu(\omega) \int_0^\pi d\theta \,\sin^{n-1}\theta \int_0^\infty dr \, r^{Q-1} \1_F \frac{\sqrt{\sin\theta}\, \omega_n \sin(\theta+\phi_n)}{r^{Q-1}} \\
	& = \alpha \int_{T^n} d\phi \int_\Sigma d\mu(\omega) \int_0^\pi d\theta \,\sin^{n-1}\theta \\
	& \qquad\qquad\qquad \times \sqrt{\sin\theta}\, \omega_n \sin(\theta+\phi_n)
	\left( \alpha \sqrt{ \sin\theta}\, \omega_n \sin_+(\theta+\phi_n) \right)^\frac{1}{Q-1} \\
	& = \alpha^\frac{Q}{Q-1} \int_{T^n} d\phi \int_\Sigma d\mu(\omega) \int_0^\pi d\theta \,\sin^{n-1}\theta \left( \sqrt{ \sin\theta}\, \omega_n \sin_+(\theta+\phi_n) \right)^\frac{Q}{Q-1}.
\end{align*}
Thus,
$$
\frac{I[F_*]}{|F_*|^\frac1Q} = \alpha Q^\frac1Q \left( \int_{T^n} d\phi \int_\Sigma d\mu(\omega) \int_0^\pi d\theta \,\sin^{n-1}\theta \left( \sqrt{ \sin\theta}\, \omega_n \sin_+(\theta+\phi_n) \right)^\frac{Q}{Q-1} \right)^\frac{Q-1}{Q}.
$$

It remains to compute the integral on the right side. The integral over $(\phi_1,\ldots,\phi_{n-1})$ can be computed immediately. Moreover, for fixed $\theta$, we can compute the integral over $\phi_n$ by translation invariance. We obtain
\begin{align*}
	& \int_{T^n} d\phi \int_\Sigma d\mu(\omega) \int_0^\pi d\theta \,\sin^{n-1}\theta \left( \sqrt{ \sin\theta}\, \omega_n \sin_+(\theta+\phi_n) \right)^\frac{Q}{Q-1} \\
	& = (2\pi)^{n-1} \int_0^\pi d\phi\, \sin^\frac{Q}{Q-1}\phi \int_0^\pi d\theta \, \sin^{n-1+\frac{Q}{2(Q-1)}}\theta \int_\Sigma d\mu(\omega)\, \omega_n^\frac{Q}{Q-1}
\end{align*}
Using the beta function identity
$$
2\int_0^\frac\pi2 dt\, (\sin^{2z_1-1}t )\,(\cos^{2z_2-1}t) = B(z_1,z_2) = \frac{\Gamma(z_1)\,\Gamma(z_2)}{\Gamma(z_1+z_2)} \,,
$$
we compute
$$
\int_0^\pi d\phi\, (\sin^\frac{Q}{Q-1}\phi) = \sqrt\pi\, \frac{\Gamma(\frac{2Q-1}{2(Q-1)})}{\Gamma(\frac{2Q-1}{2(Q-1)}+\frac12)} \,,
$$
$$
\int_0^\pi d\theta \, (\sin^{n-1+\frac{Q}{2(Q-1)}}\theta) = \sqrt\pi\, \frac{\Gamma(\frac n2+\frac{Q}{4(Q-1)})}{\Gamma(\frac n2+\frac{Q}{4(Q-1)}+\frac12)}
$$
and, when $n\geq 2$,
\begin{align*}
	\int_\Sigma d\mu(\omega)\, \omega_n^\frac{Q}{Q-1} & = \left( \prod_{j=1}^{n-2} \int_0^\frac\pi2 d\theta_j\,\sin^{2j-1} \theta_j \cos\theta_j \right) \int_0^\frac\pi2 d\theta_{n-1} \,\sin^{2n-3} \theta_{n-1} \cos^{1+\frac{Q}{Q-1}} \theta_{n-1} \\
	& = \left( \prod_{j=1}^{n-2} \frac12 \frac{\Gamma(j)}{\Gamma(j+1)} \right) \frac12 \frac{\Gamma(n-1)\,\Gamma(1+\frac{Q}{2(Q-1)})}{\Gamma(n+\frac{Q}{2(Q-1)})} \\
	& = 2^{-n+1} \frac{\Gamma(1+\frac{Q}{2(Q-1)})}{\Gamma(n+\frac{Q}{2(Q-1)})} \,.
\end{align*}
For $n=1$ the same formula remains valid, for in this case the integral is trivially equal to one. Thus, we have shown that
$$
\frac{I[F_*]}{|F_*|^\frac1Q} = \alpha Q^\frac1Q \left( \pi^n \frac{\Gamma(\frac{2Q-1}{2(Q-1)})}{\Gamma(\frac{2Q-1}{2(Q-1)}+\frac12)} \frac{\Gamma(\frac n2+\frac{Q}{4(Q-1)})}{\Gamma(\frac n2+\frac{Q}{4(Q-1)}+\frac12)}
\frac{\Gamma(1+\frac{Q}{2(Q-1)})}{\Gamma(n+\frac{Q}{2(Q-1)})} \right)^\frac{Q-1}{Q}
= \alpha \mathcal C_n'.
$$
This completes the proof of the lemma.
\end{proof}

At this point the proof of Theorem \ref{isoper} is complete. We now bring the lower bound in the theorem in a more explicit form in dimensions $n=1$ and $n=2$.

\begin{corollary}\label{isopercor}
	For $n=1,2$ one has
	$$
	I(\mathbb H)\geq 8 \cdot 3^{-\frac 98} \pi^{\frac14}
	\qquad\text{and}\qquad
	I(\mathbb H_2) \geq \pi^\frac43 6^{-\frac16} \left( \frac{\Gamma(\frac{13}{5})}{\Gamma(\frac{11}{10})} \frac{\Gamma(\frac{9}{5})}{\Gamma(\frac{13}{10})} \right)^{\frac{5}{6}} \,.
	$$
\end{corollary}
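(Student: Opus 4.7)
The proposal is a direct, computational corollary of Theorem \ref{isoper}: substitute $n=1,2$ (so $Q=4,6$) into the explicit formulas \eqref{eq:sobrepconst} and \eqref{eq:bathtubconst} for $\mathcal C_n$ and $\mathcal C_n'$, and simplify the resulting Gamma-function expressions. The expected difficulty is purely notational/arithmetic, not conceptual.

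For the case $n=1$, the plan is first to note that $\mathcal C_1 = 2^{-2}\cdot 1\cdot \Gamma(1/2)^2/\pi^2 = 1/(4\pi)$. For $\mathcal C_1'$ with $Q=4$, the relevant arguments are $\frac{2Q-1}{2(Q-1)}=\frac76$, $\frac{n}{2}+\frac{Q}{4(Q-1)}=\frac12+\frac13=\frac56$, $\frac{Q}{2(Q-1)}=\frac23$; and since $n=1$ the last ratio $\Gamma(1+\tfrac{Q}{2(Q-1)})/\Gamma(n+\tfrac{Q}{2(Q-1)})$ is exactly $1$. So
\[
\mathcal C_1' = 4^{1/4}\Bigl(\pi\,\frac{\Gamma(7/6)}{\Gamma(5/3)}\,\frac{\Gamma(5/6)}{\Gamma(4/3)}\Bigr)^{3/4}.
\]
The key simplification is to combine the recurrence $\Gamma(7/6)=\tfrac16\Gamma(1/6)$, $\Gamma(5/3)=\tfrac23\Gamma(2/3)$, $\Gamma(4/3)=\tfrac13\Gamma(1/3)$ with the reflection identities $\Gamma(1/6)\Gamma(5/6)=\pi/\sin(\pi/6)=2\pi$ and $\Gamma(1/3)\Gamma(2/3)=\pi/\sin(\pi/3)=2\pi/\sqrt 3$. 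This gives $\Gamma(7/6)\Gamma(5/6)/(\Gamma(5/3)\Gamma(4/3))=\tfrac34\sqrt 3$, hence $\mathcal C_1'=\tfrac12\,\pi^{3/4}\,3^{9/8}$. Multiplying, $I(\mathbb H)\geq (\mathcal C_1\mathcal C_1')^{-1}=8\cdot 3^{-9/8}\pi^{1/4}$, as claimed.

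For the case $n=2$ (so $Q=6$), one similarly has $\mathcal C_2=\tfrac{2\cdot \Gamma(1)^2}{2\pi^3}=\pi^{-3}$ and, with $\tfrac{2Q-1}{2(Q-1)}=\tfrac{11}{10}$, $\tfrac{n}{2}+\tfrac{Q}{4(Q-1)}=1+\tfrac{3}{10}=\tfrac{13}{10}$, $\tfrac{Q}{2(Q-1)}=\tfrac35$,
\[
\mathcal C_2' = 6^{1/6}\Bigl(\pi^2\,\frac{\Gamma(11/10)}{\Gamma(8/5)}\,\frac{\Gamma(13/10)}{\Gamma(9/5)}\,\frac{\Gamma(8/5)}{\Gamma(13/5)}\Bigr)^{5/6}.
\]
The only simplification available is $\Gamma(13/5)=\tfrac85\Gamma(8/5)$, which cancels $\Gamma(8/5)$ with its image under the recurrence. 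Inverting, $I(\mathbb H_2)\geq \pi^3\cdot 6^{-1/6}\cdot\pi^{-5/3}\bigl(\tfrac85\bigr)^{5/6}\bigl(\Gamma(8/5)\Gamma(9/5)/(\Gamma(11/10)\Gamma(13/10))\bigr)^{5/6}$, and reabsorbing $\tfrac85\Gamma(8/5)=\Gamma(13/5)$ yields exactly $\pi^{4/3}\,6^{-1/6}\bigl(\Gamma(13/5)\Gamma(9/5)/(\Gamma(11/10)\Gamma(13/10))\bigr)^{5/6}$, as claimed.

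The main (and only) obstacle is the bookkeeping of the powers of $2$, $3$, $\pi$ and $\Gamma$-values when assembling $(\mathcal C_n\mathcal C_n')^{-1}$; in particular, in the $n=1$ case one must carefully track the exponent $\tfrac{Q-1}{Q}=\tfrac34$ acting on both the $\pi$-factor and on the combinations of $\Gamma$-ratios that reduce to $\tfrac34\sqrt{3}$ via the reflection formula. Nothing deeper than algebraic manipulation is needed; the analytic content is entirely in Theorem~\ref{isoper}.
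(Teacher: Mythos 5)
Your proposal is correct and follows exactly the paper's own proof: for $n=1$ substitute $Q=4$ into \eqref{eq:sobrepconst}--\eqref{eq:bathtubconst}, observe the trivial last gamma ratio, and simplify $\Gamma(7/6)\Gamma(5/6)/(\Gamma(5/3)\Gamma(4/3))=3^{3/2}/4$ via the recurrence and reflection formulas; for $n=2$ substitute $Q=6$ and cancel $\Gamma(8/5)$. The paper merely leaves the $n=2$ arithmetic implicit, which you carry out explicitly, but both proofs are the same computation.
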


Let us discuss the value of the lower bound of $I(\mathbb H)$. Numerically, one has $$8 \cdot 3^{-\frac 98} \pi^{\frac14} \approx 3.09468\,.
$$
This should be compared with Pansu's conjecture \eqref{eq:pansusconj}, which gives the value 
\begin{equation}
	\label{eq:pansun1}
	2^{\frac52} 3^{-\frac34} \pi^{\frac12} \approx 4.39854\,.
\end{equation}
Thus, our value is still quite a bit away from the conjectured sharp value. On the other hand, it improves over Pansu's original value $$(8\pi/3)^\frac14\approx1.7013\,.
 $$
 See \cite{Pa} and also \cite[(7.15)]{CaDaPaTy}. (The discrepancy to the latter formula by a factor of $4^\frac14$ comes from the different normalization of the vector fields used in that reference.)

\begin{proof}
	The claimed formula for $n=1$ follows from Theorem \ref{isoper}, the value $\mathcal C_1=(4\pi)^{-1}$ and
	\begin{equation}\label{eq:bathtubconst1}
		\mathcal C_1' = 2^{-1} 3^\frac 98 \pi^\frac34 \,.
	\end{equation}
	To prove the latter formula, we note that, since $Q=4$,
	$$
	\mathcal C_1' = 4^\frac14 \left( \pi \frac{\Gamma(\frac 76) \, \Gamma(\frac 56)}{\Gamma(\frac{5}{3})\,\Gamma(\frac 43)} \right)^\frac34.
	$$
	By the reflection formula for the gamma function we deduce
	$$
	\frac{\Gamma(\frac 76) \, \Gamma(\frac 56)}{\Gamma(\frac{5}{3})\,\Gamma(\frac 43)}
	= \frac{\frac16\,\Gamma(\frac16)\,\Gamma(\frac56)}{\frac23 \Gamma(\frac23)\,\frac13\,\Gamma(\frac13)} = \frac{3}{4} \,\frac{\Gamma(\frac16)\,\Gamma(\frac56)}{\Gamma(\frac13)\,\Gamma(\frac23)} = \frac34 \, \frac{\frac{\pi}{\sin\frac\pi 6}}{\frac{\pi}{\sin\frac\pi3}} = \frac34\,\frac{\sin\frac\pi6}{\sin\frac\pi3} = \frac34\,\frac{\frac{\sqrt 3}2}{\frac12} = \frac{3^\frac32}{4} \,.
	$$
	This proves the claimed formula \eqref{eq:bathtubconst1}.
	
	The formula for $n=2$ follows directly from Theorem \ref{isoper}, using the definition of $\mathcal C_2$ and $\mathcal C_2'$.
\end{proof}

\begin{remark}
	If one uses the lower bounds on $I(\mathbb H_n)$ from Corollary \ref{isopercor} for $n=1,2$ and inserts them into Proposition \ref{symmetrization}, one obtains a lower bound on $C^{\rm FK}(\mathbb H_n)$ that is better than the bound given by Lemma \ref{fksob} and the Jerison--Lee value \eqref{eq:jl}. Explicitly, we obtain $C^{\rm FK}(\mathbb H) \geq 8.78829 \approx 2\pi \times 1.3987$ and $C^{\rm FK}(\mathbb H_2) \geq 17.9011 \approx 2^{7/3} \pi \times 1.13064$. This leads to the bounds 
	$$
	\gamma(\mathbb H) \leq 1.65737 
	\qquad\text{and}\qquad
	\gamma(\mathbb H_2) \leq 1.26183\,,
	$$
	which are both unsatisfactory. We do see, however, that these values are better than the bounds from \eqref{eq:gammatilden}, which are stated after Proposition \ref{gammatildemono}, viz.
	$$
 	\gamma(\mathbb H) \leq 3.2423
 	\qquad\text{and}\qquad
 	\gamma(\mathbb H_2) \leq 1.8238 \,.
 	$$ 
 	When $n=3$, the lower bound from Theorem \ref{isoper}, when inserted into Proposition~\ref{symmetrization}, does not improve over the bound on $C^{\rm FK}(\mathbb H_3)$ given by Lemma \ref{fksob} and~\eqref{eq:jl}. 
\end{remark}

Despite the negative results in the previous remark, we will see that Theorem \ref{isoper} will be useful when dealing with $\mathbb H_n\times\R^k$ with $n=1$ and $n=2$.


\subsection{A bound on the isoperimetric constant in $\mathbb H_n\times\R^k$}\label{sec:maincompn12}~\\
We recall that $I(\mathbb H_n\times\R^k)$ denotes the isoperimetric constant on $\mathbb H_n\times\R^k$. Here we prove a lower bound on this constant in terms of $I(\mathbb H_n)$ (and the known isoperimetric constants in Euclidean space).

\begin{theorem}\label{isoperk}
	For any $n,k\in\N$,
	$$
	I(\mathbb H_n\times\R^k) \geq I(\R^{2n+2+k}) \left( \frac{I(\mathbb H_n)}{I(\R^{2n+2})} \right)^\frac{2n+2}{2n+2+k} \,.
	$$
\end{theorem}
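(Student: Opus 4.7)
Set $Q := 2n+2$ and $c := I(\mathbb H_n)/I(\R^Q) \in (0,1]$, so the claim reads $I(\mathbb H_n \times \R^k) \geq c^{Q/(Q+k)} I(\R^{Q+k})$. The plan is to associate to each measurable $E \subset \mathbb H_n \times \R^k$ of finite measure a ``Euclidean twin'' $\tilde E \subset \R^{Q+k}$ with $|\tilde E| = |E|$, compare their perimeters, and then invoke the Euclidean isoperimetric inequality on $\tilde E$.

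First I would Schwarz-symmetrize $E$ in the $\R^k$-factor: for each $\zeta \in \mathbb H_n$, replace the slice $E^\zeta \subset \R^k$ by the Euclidean ball of the same measure, obtaining $E^\sharp = \{(\zeta, w) : |w| \leq r(\zeta)\}$ with $r(\zeta) := (|E^\zeta|/\omega_k)^{1/k}$. A P\'olya--Szeg\H{o} inequality for single-factor Schwarz rearrangement (which extends to our setting by slicing, since the $\R^k$-variable is Euclidean) gives $|E^\sharp| = |E|$ and $\per_{\mathbb H_n \times \R^k}(E^\sharp) \leq \per_{\mathbb H_n \times \R^k}(E)$, so it suffices to prove the inequality for $E^\sharp$. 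Parametrizing $\partial E^\sharp$ by $(\zeta, \omega) \mapsto (\zeta, r(\zeta)\omega)$ for $\omega \in \Sph^{k-1}$ and computing the horizontal surface measure yields the identity
\[
\per_{\mathbb H_n \times \R^k}(E^\sharp) = k\omega_k \int_{\mathbb H_n} \sqrt{|\nabla_{\mathbb H_n} r|^2 + 1}\, r^{k-1}\, d\zeta,
\qquad
|E^\sharp| = \omega_k \int_{\mathbb H_n} r^k\, d\zeta,
\]
with an analogous formula on $\R^Q \times \R^k$ for a non-negative function $\tilde r$ on $\R^Q$.

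Let $\tilde r : \R^Q \to [0,\infty)$ denote the Euclidean symmetric decreasing rearrangement of $r$, and set $\tilde E := \{(\tilde\zeta, w) \in \R^Q \times \R^k : |w| \leq \tilde r(\tilde\zeta)\}$; then $|\tilde E| = |E|$. The heart of the argument is the functional comparison
\[
\int_{\mathbb H_n} \sqrt{|\nabla_{\mathbb H_n} r|^2 + 1}\, r^{k-1}\, d\zeta \;\geq\; c^{Q/(Q+k)} \int_{\R^Q} \sqrt{|\nabla \tilde r|^2 + 1}\, \tilde r^{k-1}\, d\tilde\zeta.
\]
I would establish this by applying the coarea formula to $r^k/k$ and $\tilde r^k/k$: since $r^{k-1}|\nabla_{\mathbb H_n} r| = |\nabla_{\mathbb H_n}(r^k/k)|$, the Heisenberg isoperimetric inequality applied at each level $\{r > s\}$ gives the weighted bound $\int r^{k-1}|\nabla_{\mathbb H_n} r|\,d\zeta \geq c \int \tilde r^{k-1}|\nabla\tilde r|\,d\tilde\zeta$, while rearrangement preserves $\int r^{k-1}\,d\zeta = \int \tilde r^{k-1}\,d\tilde\zeta$. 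Combining these two weighted P\'olya--Szeg\H{o} estimates via a one-parameter scaling $r \mapsto \lambda r$, optimized to balance the contribution of $|\nabla r|^2$ against the constant $1$ inside the square root, then produces the sharp exponent $Q/(Q+k)$ dictated by the matching of homogeneous dimensions.

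Finally, the Euclidean isoperimetric inequality applied to $\tilde E \subset \R^{Q+k}$ gives $\per_{\R^{Q+k}}(\tilde E) \geq I(\R^{Q+k}) |E|^{(Q+k-1)/(Q+k)}$, which combined with the functional comparison above yields the theorem. The main obstacle is precisely this sharp-exponent comparison: a naive two-weight H\"older interpolation of the slice-perimeter inequalities in the $\mathbb H_n$ and $\R^k$ directions gives only the strictly weaker lower bound $I(\mathbb H_n \times \R^k) \geq I(\mathbb H_n)^{Q/(Q+k)} I(\R^k)^{k/(Q+k)}$, since $I(\R^{Q+k}) > I(\R^Q)^{Q/(Q+k)} I(\R^k)^{k/(Q+k)}$ in general. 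Capturing the additional Euclidean factor $I(\R^{Q+k})$ demands the more delicate scaling argument sketched above, treating the two summands inside the square root as independent scales.
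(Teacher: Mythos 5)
Your approach is genuinely different from the paper's. The paper peels off one Euclidean direction at a time: it records the one-dimensional slice data $v(t) = |\{ \zeta : (\zeta,t)\in E\}|$, uses the slicing inequality $\per_{\mathbb H_n\times\R^k}(E) \geq \int_\R\sqrt{p(t)^2 + v'(t)^2}\,dt + |Dv^{({\rm s})}|(\R)$ together with the isoperimetric bound $p(t)\geq I(\mathbb H_n\times\R^{k-1})\,v(t)^\frac{2n+k}{2n+k+1}$ for the slice perimeter, and then applies the elementary one-dimensional Lemma~\ref{isocartesian} to obtain the recursion $\iota_k\geq\iota_{k-1}$ for $\iota_j := (I(\mathbb H_n\times\R^j)/I(\R^{2n+2+j}))^{2n+2+j}$. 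You instead symmetrize in all $k$ Euclidean variables at once, encode the symmetrized body by a profile $r:\mathbb H_n\to[0,\infty)$, rearrange $r$ to a radial $\tilde r$ on $\R^Q$, and try to compare the two perimeter functionals directly. If that could be made to work it would be shorter and more conceptual than the induction.

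However, the central step of your argument, the ``functional comparison''
$$
\int_{\mathbb H_n} r^{k-1}\sqrt{|\nabla_{\mathbb H_n} r|^2+1}\,d\zeta \;\geq\; c^{Q/(Q+k)}\int_{\R^Q}\tilde r^{k-1}\sqrt{|\nabla\tilde r|^2+1}\,d\tilde\zeta ,
$$
is not established, and the two ingredients you offer do not imply it. Those ingredients are the weighted estimates $\int r^{k-1}|\nabla_{\mathbb H_n} r|\,d\zeta\geq c\int\tilde r^{k-1}|\nabla\tilde r|\,d\tilde\zeta$ (from coarea plus the isoperimetric inequality on $\mathbb H_n$) and $\int r^{k-1}\,d\zeta=\int\tilde r^{k-1}\,d\tilde\zeta$ (equimeasurability). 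But $x\mapsto\sqrt{x^2+1}$ is nonlinear and the coupling between $|\nabla r|$ and $1$ happens \emph{pointwise} inside the integrand, so controlling $\int r^{k-1}|\nabla r|$ and $\int r^{k-1}$ separately does not control $\int r^{k-1}\sqrt{|\nabla r|^2+1}$ with the claimed multiplicative constant. Nor does the invoked ``one-parameter scaling $r\mapsto\lambda r$'' resolve this: dilating $r$ forces the same dilation on its rearrangement $\tilde r$, so the ratio of the two sides of the functional comparison is invariant under such scalings and the optimization is vacuous --- there is no identified mechanism that makes the exponent $Q/(Q+k)$ emerge. In the paper that exponent falls out of Lemma~\ref{isocartesian}, which is a one-dimensional inequality for a single BV profile; there the expression $\sqrt{\beta^2 v^{2(N-2)/(N-1)}+(v')^2}$ can be compared \emph{exactly} with the Euclidean isoperimetric inequality applied to a solid of revolution, and no such exact comparison is available for your multi-dimensional profile $r$ on $\mathbb H_n$. (A smaller technical point: the perimeter identity $\per_{\mathbb H_n\times\R^k}(E^\sharp) = k\omega_k\int_{\mathbb H_n} r^{k-1}\sqrt{|\nabla_{\mathbb H_n} r|^2+1}\,d\zeta$ holds only when $r\in W^{1,1}$; for $r\in BV$ there is an additional singular contribution, just as $|Dv^{({\rm s})}|(\R)$ appears in the paper's slicing inequality. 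But this is secondary; the main gap is the unproven functional comparison.)
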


The proof of the theorem relies on the following lemma that concerns functions $v$ of bounded variations; see, e.g., \cite[Chapters 2 and 7]{Leo}. By definition, the distributional derivative of $v$ is a measure. By Lebesgue's decomposition theorem we can write this derivative as the sum of a singular measure $Dv^{({\rm s})}$ and a measure that is absolutely continuous with respect to Lebesgue measure and whose density we denote by $v'$.

\begin{lemma}\label{isocartesian}
	Let $N\geq 2$ and let $v\in L^\frac{N-2}{N-1}(\R)$ be a function of bounded variation. Then, for any $\beta>0$,
	$$
	\int_\R \sqrt{ (\beta v^\frac{N-2}{N-1})^2 + (v')^2}\,dt + |Dv^{({\rm s})}|(\R) \geq I(\R^N) \left( \frac{\beta}{I(\R^{N-1})} \right)^\frac{N-1}{N} \left( \int_\R |v| \,dt \right)^\frac{N-1}{N}.
	$$	
\end{lemma}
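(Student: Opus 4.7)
The idea is to interpret the left-hand side, for the base value $\beta = I(\R^{N-1})$, as the Euclidean perimeter in $\R^N$ of a surface of revolution whose slices are $(N-1)$-dimensional Euclidean balls of volume $v(t)$, then invoke the classical isoperimetric inequality in $\R^N$. The case of general $\beta$ is reduced to this base case by a dilation in the $t$-variable.

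First, one can assume $v\geq 0$: the BV chain rule applied to $s\mapsto |s|$ gives $|D|v|^{(s)}|(\R)\leq |Dv^{(s)}|(\R)$ and $|(|v|)'|=|v'|$ almost everywhere, so replacing $v$ with $|v|$ weakly decreases the left-hand side without affecting the right-hand side. Note also that $v\in BV(\R)$ is essentially bounded, and the inclusion $L^\infty\cap L^{(N-2)/(N-1)}\subset L^1$ ensures $\|v\|_1<\infty$.

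For the base case $\beta = I(\R^{N-1})$, set $r(t):=(v(t)/\omega_{N-1})^{1/(N-1)}$ and
\[
E^* := \{(x,t) \in \R^{N-1}\times\R :\ |x| < r(t)\},
\]
so that $|E^*| = \int_\R v(t)\,dt$ by Fubini. The central identity is
\[
\per_{\R^N}(E^*) = \int_\R \sqrt{(I(\R^{N-1})\, v^{(N-2)/(N-1)})^2 + (v')^2}\,dt + |Dv^{(s)}|(\R).
\]
For $v$ smooth and positive this is a direct area-of-revolution computation: parametrizing $\partial E^*$ by $(t,\omega)\mapsto (r(t)\omega,t)$ with $\omega\in\Sph^{N-2}$ gives the area element $|\Sph^{N-2}|r(t)^{N-2}\sqrt{1+r'(t)^2}\,dt$, after which one uses $|\Sph^{N-2}|r^{N-2} = I(\R^{N-1})v^{(N-2)/(N-1)}$ together with the chain rule $v' = I(\R^{N-1})v^{(N-2)/(N-1)}r'$. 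For general $v\in BV(\R)$ the identity follows by mollification and the BV relaxation of the nonparametric area functional: the Lagrangian $\sqrt{A^2+p^2}$ has linear growth and recession function $|p|$ in $p$, so the singular measure $Dv^{(s)}$ enters with its full mass. (Concretely, an atomic jump of $v$ from $v_-$ to $v_+$ at $t_0$ produces an annular horizontal wall in $\partial E^*$ of $\mathcal{H}^{N-1}$-measure $|v_+-v_-|$, matching the singular contribution.) Combined with the classical Euclidean isoperimetric inequality $\per_{\R^N}(E^*)\geq I(\R^N)|E^*|^{(N-1)/N}$, this establishes the lemma when $\beta=I(\R^{N-1})$.

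For general $\beta>0$, set $\lambda := I(\R^{N-1})/\beta$ and $\tilde v(t):=v(\lambda t)$. The change of variable $s=\lambda t$ yields
\[
\int_\R \sqrt{(I(\R^{N-1})\,\tilde v^{(N-2)/(N-1)})^2+(\tilde v')^2}\,dt = \int_\R\sqrt{(\beta\, v^{(N-2)/(N-1)})^2+(v')^2}\,ds,
\]
together with $|D\tilde v^{(s)}|(\R)=|Dv^{(s)}|(\R)$ (singular mass is preserved under affine reparametrization) and $\|\tilde v\|_1=\lambda^{-1}\|v\|_1=(\beta/I(\R^{N-1}))\|v\|_1$. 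Applying the base case to $\tilde v$ produces exactly the inequality stated in the lemma. The only substantive step is the perimeter identity for BV profiles, specifically the verification that $Dv^{(s)}$ contributes $|Dv^{(s)}|(\R)$ to $\per_{\R^N}(E^*)$; this is a standard but somewhat technical consequence of BV relaxation theory, and all remaining steps are direct computation.
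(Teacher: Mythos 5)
Your proof takes the same route as the paper: reduce by scaling to $\beta = I(\R^{N-1})$, identify the left-hand side with $\per_{\R^N}$ of the solid of revolution $\{(x,t):\omega_{N-1}|x|^{N-1}<v(t)\}$, and invoke the Euclidean isoperimetric inequality. Your proposal supplies more detail than the paper (the reduction to $v\geq 0$, the explicit surface-of-revolution computation, and the appeal to BV relaxation to handle the singular part), but the core argument is identical.
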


If $N=2$, we understand $v^\frac{N-2}{N-1}$ as the characteristic function of $\{|v|\neq 0\}$, and we understand the assumption $v\in L^\frac{N-2}{N-1}(\R)$ as requiring that this set has finite measure.

If in the first formula in \eqref{eq:isofkexplicit} we replace $|\Sph^{k-1}|$ by $2\pi^\frac k2 \Gamma(\frac k2)^{-1}$, then we see that $I(\R^k)$ can be defined for any (not necessarily integer) real number $k>0$. With this definition, Lemma \ref{isocartesian} remains valid for (not necessarily integer) $N\geq 2$. However, for us only the case where $N$ is an integer case is relevant and we only provide the \emph{proof} in this case. We note that by scaling, if the inequality holds for one value of $\beta>0$, then it holds for any such value. Thus, we may assume that $\beta = I(\R^{N-1})$. In this case, the lemma follows from the isoperimetric inequality in $\R^N$ applied to the set $\{ (x',x_N)\in\R^{N-1}\times\R:\ \omega_{N-1} |x'|^{N-1}<v(x_N)\}$, where $\omega_{N-1}$ is the volume of the unit ball in $\R^{N-1}$.

\begin{proof}[Proof of Theorem \ref{isoperk}]
	Let $E\subset\mathbb H_n\times\R^k$ have finite measure and finite perimeter and set
	\begin{align*}
		v(t) & := |\{ \zeta\in\mathbb H_n\times\R^{k-1} :\ (\zeta,t)\in E\}| \,, \\
		p(t) & := \per_{\mathbb H_n\times\R^{k-1}} \{ \zeta\in\mathbb H_n\times\R^{k-1} :\ (\zeta,t)\in E\} \,.
	\end{align*}
	It follows from Fubini's theorem that $v$ is an integrable function with
	$$
	\int_\R v(t)\,dt = |E| \,.
	$$
	Moreover, one can show that $v$ is of bounded variation and that $p$ is integrable and that
	$$
	\per_{\mathbb H_n\times\R^k} E \geq \int_\R \sqrt{ p^2 + (v')^2}\,dt + |Dv^{({\rm s})}|(\R) \,.
	$$
	(This requires some work. Similar facts appear in the review paper \cite{Ta} on the isoperimetric inequality on $\R^k$.) We bound
	$$
	p(t) \geq I(\mathbb H_n\times\R^{k-1})\, v(t)^\frac{2n+k}{2n+k+1}
	\qquad\text{for all}\ t\in\R
	$$
	and apply the lemma with $\beta =  I(\mathbb H_n\times\R^{k-1})$ and $N=2n+2+k$ to deduce that
	$$
	\per_{\mathbb H_n\times\R^k} E \geq 
	I(\R^{2n+2+k}) \left(\frac{I(\mathbb H_n\times\R^{k-1})}{I(\R^{2n+1+k})}\right)^\frac{2n+1+k}{2n+2+k} |E|^\frac{2n+1+k}{2n+2+k} \,.
	$$
	Thus,
	$$
	I(\mathbb H_n\times\R^k) \geq 
	I(\R^{2n+2+k}) \left(\frac{I(\mathbb H_n\times\R^{k-1})}{I(\R^{2n+1+k})}\right)^\frac{2n+1+k}{2n+2+k} \,.
	$$
	Dropping $n$ from the notation and abbreviating $\iota_k:= \left( I(\mathbb H_n\times\R^k)/I(\R^{2n+2+k}) \right)^{2n+2+k}$, we can write this bound as $\iota_k\geq\iota_{k-1}$. Thus, $\iota_k\geq\iota_0$, which is the assertion of the theorem.
\end{proof}

As an immediate consequence of Theorem \ref{isoperk} and Proposition \ref{symmetrization} (in the form \eqref{eq:symmetrization}) we obtain the following lower bound on the Faber--Krahn constant.

\begin{corollary}\label{isoperkcor}
	For any $n,k\in\N$,
	\begin{equation}
		\label{eq:isoperkcoralt}
		C^{\rm FK}(\mathbb H_n\times\R^k) \geq C^{\rm FK}(\R^{2n+k+2}) \left( \frac{I(\mathbb H_n)}{I(\R^{2n+2})} \right)^\frac{2(2n+2)}{2n+2+k} \,.
	\end{equation}	
\end{corollary}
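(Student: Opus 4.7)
The proof is essentially a one-line combination of the two inputs invoked in the statement, so my plan is simply to spell out that combination carefully, tracking the exponents.

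First I would recall the form \eqref{eq:symmetrization} of Proposition~\ref{symmetrization}, which, rearranged, reads
$$
C^{\rm FK}(\mathbb H_n\times\R^k) \;\geq\; I(\mathbb H_n\times\R^k)^{2}\,\frac{C^{\rm FK}(\R^{2n+2+k})}{I(\R^{2n+2+k})^{2}} \,.
$$
This reduces the task to bounding $I(\mathbb H_n\times\R^k)$ from below in terms of $I(\mathbb H_n)$.

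Next I would invoke Theorem~\ref{isoperk}, which gives exactly such a bound:
$$
I(\mathbb H_n\times\R^k) \;\geq\; I(\R^{2n+2+k}) \left( \frac{I(\mathbb H_n)}{I(\R^{2n+2})} \right)^{\!\frac{2n+2}{2n+2+k}} \,.
$$
Squaring this inequality and substituting into the previous display, the factor $I(\R^{2n+2+k})^{2}$ cancels against $I(\R^{2n+2+k})^{-2}$, leaving
$$
C^{\rm FK}(\mathbb H_n\times\R^k) \;\geq\; C^{\rm FK}(\R^{2n+2+k}) \left( \frac{I(\mathbb H_n)}{I(\R^{2n+2})} \right)^{\!\frac{2(2n+2)}{2n+2+k}} ,
$$
which is the claimed inequality.

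There is no real obstacle here; the corollary is genuinely a formal consequence of Theorem~\ref{isoperk} and Proposition~\ref{symmetrization}, and the only thing to check is that the exponent $\frac{2n+1+k}{2n+2+k}$ appearing inside the induction in the proof of Theorem~\ref{isoperk} telescopes correctly, which it does because that theorem is already stated with the simplified exponent $\frac{2n+2}{2n+2+k}$. The only minor bookkeeping is to make sure one uses the form \eqref{eq:symmetrization} (which isolates $I^2/I^2$) rather than the raw statement of Proposition~\ref{symmetrization}, so that the cancellation in the $\R^{2n+2+k}$ isoperimetric factor is transparent.
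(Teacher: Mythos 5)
Your proof is correct and is exactly the paper's argument: the paper deduces the corollary "as an immediate consequence of Theorem~\ref{isoperk} and Proposition~\ref{symmetrization} (in the form \eqref{eq:symmetrization})," which is precisely the two-step substitution you carry out with the right exponents.
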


One can bring the bound \eqref{eq:isoperkcoralt} into a somewhat more explicit form by using the explicit expressions for $C^{\rm FK}(\R^{2n+k+2})$ and $I(\R^{2n+2})$ from \eqref{eq:isofkexplicit}.


With the help of Corollary \ref{isopercor} we are ready to prove that $\gamma(\mathbb H_n\times\R^k)<1$ if $n=1$ and $k\geq 2$ or if $n=2$ and $k\geq 1$.

\smallskip

\begin{proof}[Proof of Theorem \ref{maincomp}. Cases $n=1$ and $n=2$]~\\ 
	We recall the expression for $\mathcal W(\mathbb H_n\times\R^k)$ from \eqref{eq:wnk} and write it as
	$$
	\mathcal W(\mathbb H_n\times\R^k) = \mathcal W(\R^{2n+2+k}) \mathcal W(\mathbb H_n) (4\pi)^{n+1} \Gamma(n+2) \,. 
	$$
	Using Corollary \ref{isoperkcor} we find that
	\begin{equation}
		\label{eq:gammankbound}
		\gamma(\mathbb H_n\times\R^k) \leq \gamma(\R^{2n+2+k})\ \left( \frac{I(\R^{2n+2})}{I(\mathbb H_n)} \right)^{2n+2} \mathcal W(\mathbb H_n)^{-1} (4\pi)^{-n-1} \Gamma(n+2)^{-1} \,.
	\end{equation}
	Note that the upper bound depends on $k$ only through $\gamma(\R^{2n+2+k})$. We recall from \cite[Theorem~5.1]{HPS} that the sequence $d\mapsto \gamma(\R^d)$ is decreasing. Therefore, if we can show that the upper bound is $<1$ for $(n,k)=(1,2)$ and $(n,k)=(2,1)$, then the assertions of the theorem for $n=1$ and $n=2$ will follow.
	
	\medskip
	
	\emph{Case $n=1$.} Recalling from \eqref{eq:W1}  that $\mathcal W(\mathbb H) = \frac1{128}$, we get
	\begin{align}\label{eq:gammankboundn1}
		\gamma(\mathbb H\times\R^k) 
		& \leq 4^4 \left( \frac{\Gamma(\frac{6+k}{2})}{\Gamma(3)} \right)^2
		\left( I(\mathbb H) \right)^{-4} \left( \frac{1}{j_{\frac{2+k}{2},1}} \right)^{4+k}
		\left( \mathcal W(\mathbb H) \right)^{-1} 2^k \notag \\
		& = 2^{13+k} \Gamma(\tfrac{6+k}2)^2 \left( I(\mathbb H) \right)^{-4} \left( \frac{1}{j_{\frac{2+k}{2},1}} \right)^{4+k} \,.
	\end{align}
	Specializing further to $k=2$,  this bound becomes
	\begin{align*}
		\gamma(\mathbb H\times\R^2) 
		& \leq 2^{17} 3^2 \left( I(\mathbb H) \right)^{-4} \left( \frac{1}{j_{2,1}} \right)^6 \,.
	\end{align*}
	We have $j_{2,1}\approx 5.13562$ \cite[Table 9.5]{AbSt} and, using the lower bound on $I(\mathbb H)$ from Corollary \ref{isopercor}, we obtain
	$$
	\gamma(\mathbb H\times\R^2) \leq 0.701019 \,.
	$$
	This is $<1$, as desired.
	
	\medskip
	
	\emph{Case $n=2$.} Recalling from \eqref{eq:W2} that $\mathcal W(\mathbb H_2) = \frac{1}{48^2\, \pi}$, we get
	\begin{align*}
		\gamma(\mathbb H_2\times\R^k) 
		& \leq 
		6^6 \left( \frac{\Gamma(\frac{8+k}{2})}{\Gamma(4)} \right)^2
		\left( I(\mathbb H_2) \right)^{-6} \left( \frac{1}{j_{\frac{4+k}{2},1}} \right)^{6+k}
		\left( \mathcal W(\mathbb H_2) \right)^{-1} 2^k \\
		& = 2^{12+k} 3^6 \pi \Gamma(\tfrac{8+k}{2})^2 \left( I(\mathbb H_2) \right)^{-6} \left( \frac{1}{j_{\frac{4+k}{2},1}} \right)^{6+k} \,.
	\end{align*}
	Specializing further to $k=1$, this bound becomes
	\begin{align*}
		\gamma(\mathbb H_2\times\R) 
		& \leq 2^{5} 3^8 5^2 7^2 \pi^2 \left( I(\mathbb H_2) \right)^{-6} \left( \frac{1}{j_{\frac{5}{2},1}} \right)^{7}
		\,.
	\end{align*}
	We have $j_{\frac52,1}\approx 5.76346$ \cite[Table 10.6]{AbSt} and, using the lower bound on $I(\mathbb H_2)$ from Corollary \ref{isopercor}, we obtain
	$$
	\gamma(\mathbb H_2\times\R) \leq 0.823715 \,.
	$$
	This is $<1$, as desired.
\end{proof}

As another application of Corollary \ref{isoperkcor} we now show that Pansu's conjecture implies Pleijel's theorem.

\begin{proof}[Proof of Proposition \ref{pansupleijel}]
	In view of Theorem \ref{maincomp} and Corollary \ref{pansupleijelk0} it suffices to prove that Pansu's conjecture in $\mathbb H$ implies $\gamma(\mathbb H\times\R)<1$. Arguing as in the previous proof, we want to show that the right side of \eqref{eq:gammankboundn1} is $<1$ when $k=1$ and when $I(\mathbb H)$ is replaced by the conjectured value on the right side of \eqref{eq:pansusconj}, which is $2^{\frac 52} 3^{-\frac 34}\pi^{\frac 12}$; see \eqref{eq:pansun1}. Using $j_{\frac32,1}\approx 4.49341$ \cite[Table 10.6]{AbSt}, we obtain
	$$
	2^{14} \, \Gamma(\tfrac 72)^2 \left( 2^{\frac 52} 3^{-\frac 34}\pi^{\frac 12} \right)^{-4} j_{\frac32,1}^{-5} \approx 2.639 \cdot 10^{-1} <1 \,,
	$$
	as claimed.
\end{proof}

\newpage 
\appendix

\part{Appendix}
 
\section{Local linear independence}\label{AppA2}

In our description of the nilpotent approximation in Section \ref{s1} and in the proof of our main theorem in Section \ref{sec:mainsubriemproof} we made use of results by Rothschild \cite{Rot} that were established under the assumption that the vectors $X_1(x),\ldots,X_p(x)$ are linearly independent at one (and hence, by equiregularity) any point $x\in M$. In this appendix we show that locally around any given point we may always reduce ourselves to the case where this is satisfied. We emphasize that this is well known in sub-Riemannian geometry and we briefly sketch the proof in that language in Subsection \ref{sec:subriemannianortho}. We think, however, that it is beneficial to also give an elementary proof ``by hand''. Finally, in Subsection \ref{ssA.4} we give discuss topological obstructions to a global version of this statement.

More precisely, we work under Assumptions \ref{CHOr} and \ref{equireg} and we recall that $n_j=\dim\mathcal D_j(x)$. We shall prove the following.

\begin{lemma}\label{locindep}
	For any $x_0\in M$ there is an open neighborhood $W\subset M$ and smooth vector fields $\tilde X_1,\ldots,\tilde X_{n_1}$ defined in $W$ such that
	\begin{equation}
		\label{eq:loclinindepapp}
		{\rm span}\{ \tilde X_1(x),\ldots \tilde X_{n_1}(x) \} = \mathcal D_1(x)
		\qquad\text{for all}\ x\in W
	\end{equation}
	and
	$$
	-\Delta_{\bf X}^{M,\mu} f = \sum_{j=1}^{n_1} \tilde X_j^\star \tilde X_j f
	\qquad\text{for all} \ f\in C^2(W) \,.
	$$
	The vector fields $\tilde X_1,\ldots,\tilde X_{n_1}$ still satisfy Assumptions \ref{CHOr} (with the same $r$) and \ref{equireg} (with the dimensions $n_j$).
\end{lemma}

\medskip

\subsection{A first example}~\\ 
To start, we consider the example where
$$
-\Delta_\Xb = X_1^\star X_1 + X_2^\star X_2 + X_3^\star X_3 \,.
$$
We suppose that Assumptions \ref{CHOr} with $r=2$ and Assumption \ref{equireg} are satisfied, but we assume that at each point $x\in M$ the space generated by $X_1(x), X_2(x), X_3(x)$ is of dimension $2$.

Let us derive Lemma \ref{locindep} in this particular case, that is, show that locally, we can represent $-\Delta_\Xb$ in the form
 $$
- \Delta_\Xb =  \tilde X_1^\star \tilde  X_1 + \tilde X_2^\star \tilde X_2 \,,
 $$
for suitable  vector fields $\tilde X_1,\tilde X_2$, such that for all $x$ in a neighborhood of a given point the vectors $\tilde X_1(x)$ and $\tilde X_2(x)$ are linearly independent and belong to $\mathcal D_1(x)$.
 
We proceed as for the proof of the Morse lemma. Let us assume that $X_1$ and $X_2$ are linearly independent in some open set $W$ in $M$. Hence we can write for $x\in W$
$$
X_3(x) = a_1(x) X_1(x) + a_2(x) X_2(x)
$$
with two smooth functions $a_1$ and $a_2$ on $W$. The two desired vector fields are given by
\begin{equation}
 \begin{array}{ll}
 \tilde X_1 &= \sqrt{1+a_1^2} \, X_1 + \frac{a_1 a_2}{\sqrt{1+ a_1^2}} \, X_2 \,, \\
 \tilde X_2 &= \sqrt{\frac{1+ a_1^2 + a_2^2}{1+a_1^2}} \, X_2\,.
  \end{array}
\end{equation}
Clearly $\tilde X_1$ and $\tilde X_2$ are linearly independent. Note also that
  $$
  [\tilde X_1,\tilde X_2] = \sqrt{1+a_1^2 +a_2^2}\, [X_1,X_2] + b_1 X_1 + b_2 X_2
  $$
with two smooth functions $b_1$ and $b_2$ on $W$. This proves the assertions of Lemma \ref{locindep} in this example.

In passing we mention that in this case we can take as an adapted flag (see Section~\ref{s1}) the vector fields $\tilde X_1, \tilde X_2,  [\tilde X_1,\tilde X_2] $, so that the group $G_x$ is the Heisenberg group.



\medskip

\subsection{General argument}~\\
The general argument is as follows: we consider a sub-Laplacian $\Delta_\Xb^{M,\mu} =-\sum_{i=1}^p X_i^\star X_i$ and assume that $p>n_1$ (for otherwise there is nothing to prove). For $f\in C^\infty(M)$ we consider
$$
Q[f](x) := \sum_{j=1}^p (X_j f (x))^2 \,.
$$

We choose $n_1$ vector fields, which we can assume to be $X_1,\ldots,X_{n_1}$ (up to relabeling), that are linearly independent in a neighborhood of some point $x_0\in M$. In the sequel $x$ denotes the variable in this neighborhood. For $j=n_1+1,\ldots,p$,  we find coefficients $a_{ij}$ such that
$$
X_j=\sum_{\ell=1}^{n_1} a_{j\ell}(x)X_\ell \,.
$$
Expanding, we obtain for $f$ with compact support in this neighborhood
$$
Q[f](x)=\sum_{\ell, \ell'=1}^{n_1} b_{\ell \ell'}(x)X_\ell f(x) X_{\ell'}f(x)
$$
with some coefficients $b_{\ell \ell'}$ computed in terms of the $a_{j\ell}$. The matrix
$$
B_x :=(b_{\ell \ell'}(x))_{1\leq \ell,\ell'\leq n_1}
$$
is symmetric and positive definite.

In this case, we can have a normal form (see the proof of the Morse lemma):
$$
B_x = T_x^t\,  T_x \,,
$$
where $T_x = (t_{i\ell}(x))$ is triangular, invertible,  depending smoothly on $x$ in the construction.  
We set for $i=1,\ldots,n_1$, 
$$
\tilde{X}_i(x) := \sum_{\ell=i}^{n_1} t_{i\ell}(x)X_\ell(x)
$$
and get
$$
Q[f](x) =\sum_{i=1}^{n_1} (\tilde{X}_i f(x))^2 \,.
$$
Integrating this identity with respect to $\mu$ we conclude that, locally,
$$
\Delta = - \sum_{i=1}^{n_1} \tilde{X}_i^\star \tilde{X}_i \,,
$$
as claimed. Clearly, at each point $x$ in the relevant neighborhood the span of the vectors $\tilde X_1(x),\ldots,\tilde X_{n_1}(x)$ is equal to $\mathcal D_1(x)$. Since the $\mathcal D_j$ for $j\geq 1$ depend on the vector fields $X_1,\ldots,X_p$ only through their span $\mathcal D_1$, we see that the $\tilde X_1,\ldots,\tilde X_{n_1}$ satisfy Assumptions \ref{CHOr} (with the same $r$) and \ref{equireg} (with the dimensions $n_j$). This concludes the proof of Lemma \ref{locindep}.

\medskip

\subsection{Sub-Riemannian geometric construction}\label{sec:subriemannianortho}~\\
We provide\footnote{Discussions with Y.~ Colin de Verdi\`ere, L.~Hillairet and C.~Letrouit.}an alternative approach to the local argument of the previous subsection.
We refer for example to the book \cite{Je}. We introduce the following metric on the distribution: for $x\in M$ and $v$ a linear combination of $X_1(x),\ldots,X_p(x)$ we set
$$
g_x(v) := \inf\left\{\left(\sum_{i=1}^p u_i^2\right)^{1/2} :\ \sum_{i=1}^p u_iX_i(x)=v\right\}.
$$
Then we pick a smooth orthonormal frame $(\tilde{X}_1(x),\ldots,\tilde{X}_{n_1}(x))$ for the above metric. Finally we check that
\begin{equation}\label{e:equality}
\sum_{i=1}^p (X_if)^2=\sum_{i=1}^{n_1} (\tilde{X}_if )^2 \,.
\end{equation}
This is done as follows: we pick coefficients $u_{ij}$ such that $\tilde{X}_i=\sum_{j=1}^p u_{ij} X_j$ with $\sum_{j=1}^p u_{ij}^2=1$. By orthogonality of the $\widetilde{X}_i$'s, we know that $\sum_{j=1}^p u_{ij} u_{i' j}=0$ for $i\neq i'$. Plugging this into the right side of \eqref{e:equality}, we obtain its left side.

\medskip

\subsection{Topological constraints}\label{ssA.4}~\\
The existence of $n_1$ global vector fields in $\mathcal D_1$ giving for each 
$x\in M$ a basis of $T_xM$ is only possible under strong topological conditions on $M$ that involve its orientability, its Euler characteristic, the Euler class of $\mathcal D_1$ and other invariants\footnote{Discussion with V.~Colin.}.

Note first that the existence of a global non-zero vector field implies that the Euler characteristic of $M$ is zero by the Poincar\'e--Hopf theorem. 
When $M$ has dimension~3 and is orientable, this is not an obstruction since this Euler characteristic is zero. A second obstruction related to the Euler class of the fiber bundle also disappears when $M=S^3$.  If $n_1=2$, one can instead of a basis of $\mathcal D_1(x)$ consider the normal to the plane. This gives a map from $S^3$ to $S^2$ and the associated Hopf invariant  (which belongs to $\mathbb Z$) should be zero, which is not the case in general.


\section{Weyl law and heat kernel}\label{AppC}
 This section is based on notes kindly transmitted to us by C.\ Letrouit. We make a connection with the framework of \cite{CHT1}.

\medskip

\subsection{Nilpotentization of measure}~\\ 
We follow \cite[Section A.5.6]{CHT1}. Let $\mu$ be a smooth measure on $M$ and let $x \in M$. Recall that privileged coordinates are defined by $\theta_x$, which maps a neighborhood of $x$ in $M$ to a neighborhood of $0$ in $\R^n$. We also recall the definition of the family of dilations $\delta_t$ on $\R^n$ from \eqref{eq:dilation}. Setting $\delta_x^{(\epsilon)} := \theta_x^{-1} \circ \delta_\varepsilon$, for $\varepsilon>0$ we can define the measures
$$
\mu_x^{(\varepsilon)} := \varepsilon^{-Q}(\delta_x^{(\varepsilon)})^*\mu
$$
on a neighborhood of the origin in $\R^n$. The nilpotentization $\widehat\mu_x$ of $\mu$ at $x$, which we defined in \eqref{eq:nilpmeas} ``by hand'', satisfies
$$
\widehat{\mu}_x=\lim_{\varepsilon\rightarrow 0_+}\mu_x^{(\epsilon)}
$$
with convergence in the vague topology.

\medskip

\subsection{Nilpotentization of spectral function and heat kernel}~\\
We follow \cite[Section A.8.3]{CHT1}. The nilpotentized sub-Laplacian at $x$ is defined as an operator on functions on $G_x\simeq \R^n$ by
$$
\widehat{\Delta}_x= \sum_{i=1}^m (\widehat{X}_{i,x})^2.
$$
It is self-adjoint in $L^2(G_x,\widehat\mu_x)$ with the usual domain and nonnegative.

In this appendix we denote by
$$
\widehat e_x(\lambda,u,v) := \1(-\widehat\Delta_x <\lambda)(u,v)
$$
the spectral function of $-\widehat\Delta_x$, that is, the integral kernel of the spectral projection $\1(-\widehat\Delta_x<\lambda)$ for $\lambda>0$.

The operator $\partial_t- \widehat{\Delta}_x$ is hypoelliptic and therefore $\exp (t \widehat \Delta_x)$ is an integral operator whose integral kernel is denoted by
$$
\widehat k_x(t,u,v) := (\exp(t\widehat\Delta_x))(u,v) \,,
$$
that is,
$$
(\exp(t \widehat \Delta_x) f)(u)= \int_{G_x} \widehat k_x(t,u,v)  f(v)\, d \widehat{\mu}_x(v)
\qquad \forall u\in G_x\,.
$$
According to the functional calculus we have
\begin{equation}
	\label{eq:heatkernelspecfcn}
	\widehat k_x(t,u,v) = \int_0^\infty e^{-t\lambda} \widehat e_x(\lambda,u,v)\,dt 
	\qquad\text{for all}\ t>0 \,.
\end{equation}

\medskip

\subsection{Effect of changes of coordinates}~\\
We follow \cite[second part of Appendix A.8.2]{CHT1}. The above nilpotentizations of the spectral function and of the heat kernel depend on the choice of privileged coordinates (we have made a specific choice in the main text) and on the measure $\widehat\mu_x$. We claim that diagonal values
$$
\widehat e_x(\lambda,u,u)
\qquad\text{and}\qquad
\widehat k_x(t,u,u)
$$
do not depend on this choice.

Indeed, this follows from \cite[third relation in (85)]{CHT}, which says that changing variables both in the operator and in the measure has no effect on the heat kernel. More precisely, if we denote the integral kernel of $-\Delta^{M,\mu}_{\bf X}$ by $k_{\Delta,\mu}$ and if $\varphi$ is a diffeomorphism of $M$, then
\begin{equation}
	\label{eq:diffeoheat}
	k_{\varphi^* \Delta \varphi_*, \varphi^*\mu}(t,u,v)=k_{\Delta,\mu}(t,\varphi(u),\varphi(v)) \,.
\end{equation}
We apply this formula with $M$ replaced by $G_x$, with $-\Delta^{M,\mu}_{\bf X}$ replaced by $-\widehat\Delta_x$ and with $\mu$ replaced by $\widehat\mu_x$. A similar argument applies to the spectral function.

Choosing $\varphi$ in \eqref{eq:diffeoheat} as the diffeomorphism on $G_x$ given either by dilation or by translation by a group element, we see that
$$
\widehat k_x(t,u,u) = c^{\rm heat}_x \, t^{-\frac Q2}
\qquad\text{for all}\ u\in G_x \,,\\ t>0 \,,
$$
where
$$
c^{\rm heat}_x = \widehat k_x(1,0,0) \,.
$$
These are the analogues of \eqref{eq:specfcndiag1} and \eqref{eq:specfcndiag2}. It follows from \eqref{eq:heatkernelspecfcn} that
\begin{equation}
	\label{eq:heatkernelspecfcnconst}
	c_x^{\rm heat} = \Gamma(\tfrac Q2+1) \, c^{\rm Weyl}_x \,.
\end{equation}

\medskip

\subsection{Heat kernel asymptotics and Weyl law}~\\
The following appears in \cite[Theorems I and 4.1]{CHT1}:

\begin{theorem}\label{weylapp}
In the equiregular case, for every $f\in C^\infty(M)$, we have
$$
\Tr f e^{t\Delta^{M,\mu}_{\bf X}} =
\int_M f(x)k_{\Delta,\mu}(t,x,x)\,d\mu(x) = t^{-\frac Q2}\, F(t)
$$
for some $F\in C^\infty(\R)$ with
$$
F(0)=\int_M f(x) \, c^{\rm heat}_x \,d\mu(x) \,.
$$
Moreover, the eigenvalue counting function satisfies
\begin{equation}\label{e:specasymp}
N(\lambda,-\Delta^{M,\mu}_{\bf X})\sim \int_M c^{\rm Weyl}_x \,d\mu(x) \ \lambda^{\frac Q2}
\qquad\text{as}\ \lambda\to\infty \,.
\end{equation}
\end{theorem}

Behind the proof of this theorem is the fact (see \cite[Theorem C.1 and Eq.\ (96)]{CHT1}) that $t^{\frac Q2} k_{\Delta,\mu}(t,x,x)$ converges to $\widehat{e}^x(1,0,0)=c^{\rm heat}_x$ as $t\rightarrow 0^+$, uniformly with respect to $x$. This uses the equiregularity assumption. The spectral asymptotics \eqref{e:specasymp} follows from the heat kernel asymptotics by a Tauberian theorem. The coefficient there coincide with that in \cite{CHT1} in view of \eqref{eq:heatkernelspecfcnconst}.


\newpage



\bibliographystyle{amsalpha}

\end{document}